\theoremstyle{plain}
\newtheorem{theorem}{Theorem}[section]
\newtheorem{corollary}[theorem]{Corollary}
\newtheorem{lemma}[theorem]{Lemma}
\newtheorem{proposition}[theorem]{Proposition}
\theoremstyle{definition}
\theoremstyle{remark}
\newtheorem{remark}[theorem]{Remark}
\newcommand{\R}{\mathbb{R}}
\title[Bounds on the eigenvalues of principal submatrices]{Aggregate Bounds on the eigenvalues of the principal submatrices of a Hermitian matrix and majorization relations}
\author{Hristo Sendov \orcidlink{0000-0002-0908-1535}}
\address{Department of Statistical and Actuarial Sciences \\
Department of Mathematics \\
Western University \\
1151 Richmond Street \\
London, ON, N6A 5B7 Canada}
\email{hsendov@uwo.ca}
\thanks{The first author was partially supported by the Natural Sciences and Engineering Research Council (NSERC) of Canada. (Grant number RGPIN-2020-06425.)}
\author{Mengxu Yuan \orcidlink{0009-0002-5166-183X}}
\address{Department of Mathematics \\
Western University \\
1151 Richmond Street \\
London, ON, N6A 5B7 Canada}
\email{myuan89@uwo.ca}
\subjclass[2020]{Primary 15A18, 47A56. Secondary 47A75} 
\keywords{Polynomials with real roots, Critical points, Majorization, Eigenvalues, Hermitian matrices, Szasz's inequalities, Schur's majorization theorem}
\begin{document}

\begin{abstract}
We extend bounds, proved by R.C.~Thompson in 1966, on the sum of the $j$-th largest eigenvalues of the $(n-1) \times (n-1)$ principal matrices of an $n \times n$ Hermitian matrix. Our bounds are stronger than just summing up Thompson's bounds. We achieve the extensions as a corollary of a more general result giving bounds on the zeros of the generalized derivatives of polynomials with real roots. We use the extended bounds to obtain majorization relationships between the eigenvalues of all $m \times m$ principal matrices of an $n \times n$ Hermitian matrix. These majorization relationships imply both a well-known majorization result by Schur and the well-known Szasz's inequalities. 
\end{abstract}

\maketitle

\section{Introduction and overview} 
\label{sec:intro}

The relationships between the eigenvalues of $n \times n$ Hermitian matrix and those of its $(n-1) \times (n-1)$ principal submatrices resemble those between the zeros and critical points of hyperbolic polynomials.  The Cauchy’s interlacing theorem states that real numbers satisfy the interlacing inequalities
$$
\lambda_1 \ge \mu_1 \ge \lambda_2 \ge \mu_2 \ge \cdots \ge \mu_{n-1} \ge \lambda_n
$$
if and only if there is an $n \times n$ Hermitian matrix with eigenvalues $\lambda_1,\ldots, \lambda_n$ and having an  $(n-1) \times (n-1)$ principal submatrix with eigenvalues $\mu_1,\ldots, \mu_{n-1}$, see Section 4.3 in \cite{horn2013}. Many improvements and extensions of this basic fact have been established since. To state the most pertinent results, we fix the notation. 
The vector of eigenvalues of an $n \times n$ Hermitian matrix $A$ is denoted by 
$\lambda(A) = (\lambda_1(A),  \dots, \lambda_n(A))$ and is assumed to be ordered non-increasingly
$$
\lambda_1(A) \ge  \dots \ge \lambda_n(A).
$$
For each $i \in \{1, \dots, n\}$, let $A_i$ denote the 
$(n-1) \times (n-1)$ principal submatrix obtained by deleting the $i$-th row and column of $A$. The vector of eigenvalues of $A_i$ is denoted by $\mu(A_i) = (\mu_{i,1},  \dots, \mu_{i,n-1})$ and is also ordered non-increasingly
\[
\mu_{i,1} \ge  \cdots \ge \mu_{i,n-1}.
\]
In \cite{thompson1966}, Thompson proved the following bounds for the aggregate behaviour of the $j$-th eigenvalues of the $(n-1) \times (n-1)$ principal submatrices.

\begin{theorem}[Thompson] 
\label{thm:spectral_bounds}
For any $j$,  $1\le j \le n-1$, we have
\begin{equation}
\label{2026-01-09-ineq}
   \lambda_j +  (n-1)\lambda_{j+1}  \le \sum_{i=1}^n \mu_{i, j} \le (n-1)\lambda_j + \lambda_{j+1}.
\end{equation}
\end{theorem}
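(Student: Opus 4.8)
The plan is to pass to the characteristic polynomials, recognize each $p_i$ as a nonnegatively weighted ``generalized derivative'' of $p$, prove a convex-combination bound for the roots of such a derivative, and then sum over $i$. Write $p(t):=\det(tI-A)=\prod_{k=1}^{n}(t-\lambda_k)$ and $p_i(t):=\det(tI_{n-1}-A_i)=\prod_{k=1}^{n-1}(t-\mu_{i,k})$. Diagonalizing $A=U\Lambda U^{*}$ with $U$ unitary and using the cofactor identity $[(tI-A)^{-1}]_{ii}=p_i(t)/p(t)$ gives
\[
p_i(t)=\sum_{k=1}^{n} w_{ik}\prod_{l\neq k}(t-\lambda_l),\qquad w_{ik}:=|U_{ik}|^{2},
\]
where $W=(w_{ik})$ is doubly stochastic ($w_{ik}\ge 0$, and all row and column sums equal $1$), so each $p_i$ is a nonnegatively weighted generalized derivative of $p$; this is the bridge to the general result mentioned in the abstract.

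The key lemma is a two-sided bound on the $j$-th largest root $\rho_j$ of $D_c p(t):=\sum_{k=1}^{n} c_k\prod_{l\neq k}(t-\lambda_l)$, for any weights $c_k\ge 0$: cleared of denominators,
\[
\Bigl(\textstyle\sum_{k\le j}c_k\Bigr)\lambda_{j+1}+c_{j+1}\lambda_j\ \le\ \Bigl(\textstyle\sum_{k\le j+1}c_k\Bigr)\rho_j,
\qquad
\Bigl(\textstyle\sum_{k\ge j}c_k\Bigr)\rho_j\ \le\ c_j\lambda_{j+1}+\Bigl(\textstyle\sum_{k\ge j+1}c_k\Bigr)\lambda_j .
\]
When the $\lambda_k$ are distinct and the weights positive, $\rho_j$ is the unique zero in $(\lambda_{j+1},\lambda_j)$ of $g(t):=D_c p(t)/p(t)=\sum_{k} c_k/(t-\lambda_k)$; rewriting $g(\rho_j)=0$ as $\sum_{k\le j}\tfrac{c_k}{\lambda_k-\rho_j}=\sum_{k\ge j+1}\tfrac{c_k}{\rho_j-\lambda_k}$ and, for the right-hand inequality, bounding the left sum from below by its $k=j$ term and the right sum from above by replacing each $\lambda_k$ ($k\ge j+1$) with $\lambda_{j+1}$ (and, for the left-hand inequality, bounding the left sum from above by replacing each $\lambda_k$ ($k\le j$) with $\lambda_j$ and the right sum from below by its $k=j+1$ term), the two displays follow after cross-multiplying; the degenerate cases come from continuity of the roots.

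To conclude, apply the right-hand bound with $c$ equal to the $i$-th row of $W$, so that $\rho_j=\mu_{i,j}$; with $S_i:=\sum_{k\ge j}w_{ik}$ it reads $S_i\mu_{i,j}\le w_{ij}\lambda_{j+1}+(S_i-w_{ij})\lambda_j$, hence $w_{ij}(\lambda_j-\lambda_{j+1})\le S_i(\lambda_j-\mu_{i,j})\le\lambda_j-\mu_{i,j}$, using $S_i\le 1$ (a row sum of $W$) and $\mu_{i,j}\le\lambda_j$ (Cauchy interlacing). Summing over $i$ and using $\sum_{i}w_{ij}=1$ (a column sum of $W$) gives $\lambda_j-\lambda_{j+1}\le n\lambda_j-\sum_i\mu_{i,j}$, which is the upper estimate in \eqref{2026-01-09-ineq}; the lower estimate follows identically from the left-hand bound of the lemma, or by applying the upper estimate to $-A$.

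The cofactor computation and the final summation are routine; I expect the real work to be the clean derivation of the convex-combination estimate for a single generalized derivative, together with a careful treatment of coincident eigenvalues and vanishing weights -- where $\rho_j$ may coincide with an endpoint $\lambda_j$ or $\lambda_{j+1}$ -- which is handled by approximating $A$ with Hermitian matrices having simple spectrum and invoking continuity of the eigenvalues.
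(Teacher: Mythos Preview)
Your proof is correct and shares the paper's overall architecture: express each $p_i$ as a nonnegatively weighted generalized derivative of $p$ via the secular/cofactor identity (the paper's Lemma~3.1), prove a two-sided bound on the $j$-th root of such a derivative (the $\ell=r=j$ case of the paper's Theorem~2.2), then sum over $i$ and exploit that $(|U_{ik}|^2)$ is doubly stochastic --- the step $S_i\le 1$, $\sum_i w_{ij}=1$ is exactly the relaxation $\Omega_j(\ell)\ge 1$ in the paper's Theorem~3.5.

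The genuine difference is in the proof of the key single-root lemma. The paper does not argue directly from the partial-fraction identity $g(\rho_j)=0$; instead it first treats the equal-weight case by a variational argument (Proposition~2.4: push the outer poles to $\pm\infty$, coalesce the remaining ones, and read off the extremal value from Vieta's formulas), and then reaches arbitrary rational weights by replacing each pole $\lambda_i$ of weight $k_i/N$ with a cluster of $k_i$ equal-weight poles and passing to the limit. Your bound-the-two-sides-of-$g(\rho_j)=0$ argument is shorter and fully self-contained for the case $\ell=r$, which is all Thompson's inequality needs. What the paper's route buys is that the variational/Vieta argument handles sums $\sum_{j=\ell}^r \mu_j$ for an arbitrary block $\ell\le r$ in one stroke, which is the paper's main point; your rational-function trick, as written, bounds a single $\rho_j$ and would require additional work to recover the stronger aggregate inequalities of Theorem~1.4.
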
 
The algebraic relation that provided the theoretical foundation of Thompson's result is the following eigenvector-eigenvalue identity. For the history and various derivations of this identity one should refer to the recent work \cite{denton2022}.

\begin{theorem}
\label{thm:eigenvector_identity}
Let $v_i$ be a normalized eigenvector associated with $\lambda_i$. Then 
\begin{equation}
    |v_{i,j}|^2 \prod_{\substack{k=1 \\ k \ne i}}^n (\lambda_i - \lambda_k) = \prod_{k=1}^{n-1} (\lambda_i - \mu_{j,k}),
\end{equation}
where $v_{i,j}$ denotes the $j$-th component of $v_i$.
\end{theorem}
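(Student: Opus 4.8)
The plan is to compare two different expressions for the $(j,j)$ diagonal entry of the resolvent $(tI - A)^{-1}$, regarded as a rational function of a complex variable $t$. Write $p_A(t) := \det(tI - A) = \prod_{k=1}^n (t - \lambda_k)$ for the characteristic polynomial of $A$, and $p_{A_j}(t) := \det(tI - A_j) = \prod_{k=1}^{n-1}(t - \mu_{j,k})$ for that of the principal submatrix $A_j$ obtained by deleting the $j$-th row and column. On one hand, Cramer's rule gives $(tI-A)^{-1} = p_A(t)^{-1}\,\mathrm{adj}(tI-A)$, and the cofactor description of the adjugate identifies the $(j,j)$ entry of $\mathrm{adj}(tI-A)$ with the $(j,j)$ minor of $tI - A$, namely $\det(tI_{n-1} - A_j) = p_{A_j}(t)$. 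Hence the $(j,j)$ entry of the resolvent equals $p_{A_j}(t)/p_A(t)$.

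On the other hand, using a spectral decomposition $A = \sum_{k=1}^n \lambda_k v_k v_k^{*}$ with $\{v_k\}$ an orthonormal eigenbasis, one has $(tI - A)^{-1} = \sum_{k=1}^n (t-\lambda_k)^{-1} v_k v_k^{*}$, so its $(j,j)$ entry equals $\sum_{k=1}^n |v_{k,j}|^2 (t - \lambda_k)^{-1}$. Equating the two expressions yields the partial-fraction identity
\[
\sum_{k=1}^n \frac{|v_{k,j}|^2}{t - \lambda_k} \;=\; \frac{p_{A_j}(t)}{p_A(t)} \;=\; \frac{\prod_{k=1}^{n-1}(t - \mu_{j,k})}{\prod_{k=1}^n(t-\lambda_k)}.
\]

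Now fix $i$ and suppose first that $\lambda_i$ is a simple eigenvalue of $A$. Multiplying both sides by $(t - \lambda_i)$ and letting $t \to \lambda_i$ — equivalently, taking the residue at $t = \lambda_i$ — annihilates every term on the left except $|v_{i,j}|^2$, while the right-hand side contributes $p_{A_j}(\lambda_i)\big/\prod_{k \ne i}(\lambda_i - \lambda_k)$. Rearranging gives exactly the claimed identity, since $p_{A_j}(\lambda_i) = \prod_{k=1}^{n-1}(\lambda_i - \mu_{j,k})$. If instead $\lambda_i$ has multiplicity at least two, the product $\prod_{k\ne i}(\lambda_i - \lambda_k)$ on the left vanishes, and it suffices to observe that the right-hand side vanishes as well: by Cauchy's interlacing theorem applied to $A$ and $A_j$, a repeated value $\lambda_i$ forces some $\mu_{j,k}$ to equal $\lambda_i$, so $\prod_{k=1}^{n-1}(\lambda_i - \mu_{j,k}) = 0$; thus both sides are $0$ irrespective of the (non-unique) choice of $v_i$.

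The only genuinely delicate point is this degenerate case: $v_i$ is not uniquely determined when $\lambda_i$ is a multiple eigenvalue, so the argument should verify directly that both sides are zero rather than invoke a continuity argument in $A$ (which would be awkward, as $v_i$ does not vary continuously with $A$ near such configurations). Everything else is routine: the cofactor expansion of the adjugate, and the uniqueness of the partial-fraction decomposition of a proper rational function with simple poles.
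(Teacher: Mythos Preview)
Your argument is correct. Note, however, that the paper does not actually supply its own proof of this theorem: it is quoted as background, with a pointer to the survey \cite{denton2022} for derivations. That said, essentially the same adjugate computation appears later in the paper as Lemma~\ref{lem:golub_secular}, where it is shown that for any unit vector $u$ the characteristic polynomial of the compression of $A$ onto $u^\perp$ equals $\sum_{i} |\langle u, v_i\rangle|^2 \prod_{k\ne i}(x-\lambda_k)$; specializing to $u=e_j$ and evaluating at $x=\lambda_i$ recovers the eigenvector--eigenvalue identity immediately. Your resolvent/partial-fractions route and the paper's direct adjugate manipulation are two presentations of the same calculation, and your explicit treatment of the repeated-eigenvalue case (both sides vanish by interlacing) is a clean way to dispose of the degeneracy.
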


In subsequent papers, see \cite{thompson1968a} and \cite{thompson1968b}, Thompson investigated further consequences of the eigenvector-eigenvalue identity. Another notable relationship is given by Theorem 3 in \cite{johnson1981} as follows. 

\begin{theorem}[Johnson \& Robinson]
\label{2026-01-15-JR}
For each $j$, $1 \le j \le n-1$, we have 
$$
\max_{1 \le i \le n} \mu_{i,j} \ge \frac{n-j}{n} \lambda_{j} +  \frac{j}{n} \lambda_n
\quad \mbox{and} \quad
\min_{1 \le i \le n} \mu_{i,j} \le \frac{n-j}{n} \lambda_{1} + \frac{j}{n} \lambda_{j+1}.
$$
\end{theorem}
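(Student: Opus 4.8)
The plan is to read off both inequalities from the Courant--Fischer variational description of $\mu_{i,j}=\lambda_j(A_i)$. For a carefully chosen index $i_0$ I will exhibit a $j$-dimensional subspace $S$ of the coordinate hyperplane $e_{i_0}^{\perp}$ (the space on which $A_{i_0}$ acts) on which the Rayleigh quotient of $A$ never drops below $\tfrac{n-j}{n}\lambda_j+\tfrac jn\lambda_n$; then $\max_{1\le i\le n}\mu_{i,j}\ge\mu_{i_0,j}=\lambda_j(A_{i_0})\ge\min_{0\neq x\in S}x^{*}Ax/\|x\|^2\ge\tfrac{n-j}{n}\lambda_j+\tfrac jn\lambda_n$ settles the first inequality. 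The second is not proved separately: apply the first inequality to $-A$ (whose eigenvalues, and whose submatrix eigenvalues, are the negatives of the reversed sequences $\lambda(A)$ and $\mu(A_i)$) and relabel $j\mapsto n-j$.

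To construct $S$, fix orthonormal eigenvectors $v_1,\dots,v_n$ with $Av_k=\lambda_kv_k$, let $V_1=\operatorname{span}(v_1,\dots,v_j)$, and let $P$ be the orthogonal projection onto $V_1$; thus $V_1$ is $A$-invariant and $x^{*}Ax\ge\lambda_j\|x\|^2$ for all $x\in V_1$. Since the diagonal entries $p_i:=\|Pe_i\|^2$ of $P$ sum to $\operatorname{tr}P=j$, some index $i_0$ satisfies $p_{i_0}\le j/n$. Decompose $e_{i_0}=q+r$ with $q=Pe_{i_0}\in V_1$ and $r=e_{i_0}-q\perp V_1$, so that $\|q\|^2=p_{i_0}\le j/n$ and $\|r\|^2=1-p_{i_0}>0$ (using $j<n$). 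Set
\[
S=\bigl(V_1+\operatorname{span}(r)\bigr)\cap e_{i_0}^{\perp}.
\]
Because $e_{i_0}=q+r$ lies in the $(j+1)$-dimensional space $V_1+\operatorname{span}(r)$, intersecting it with the hyperplane $e_{i_0}^{\perp}$ lowers the dimension by exactly one, so $\dim S=j$.

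The heart of the matter is the Rayleigh-quotient estimate on $S$. Write a vector $x\in S$ as $x=y+tr$ with $y\in V_1$ and $t\in\C$. Because $V_1$ is $A$-invariant and $r\perp V_1$, the cross terms vanish, so $x^{*}Ax=y^{*}Ay+|t|^2 r^{*}Ar\ge\lambda_j\|y\|^2+\lambda_n|t|^2\|r\|^2$ (here $\lambda_n$ is the least eigenvalue of $A$), while $\|x\|^2=\|y\|^2+|t|^2\|r\|^2$. The orthogonality condition $x\perp e_{i_0}$ reads $\langle y,q\rangle+t\|r\|^2=0$; in particular a nonzero $x$ forces $y\neq0$, and $t=-\langle y,q\rangle/\|r\|^2$, so Cauchy--Schwarz gives $|t|^2\|r\|^2\le\|y\|^2\|q\|^2/\|r\|^2$. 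Using that $s\mapsto s/(\|y\|^2+s)$ is increasing,
\[
\frac{|t|^2\|r\|^2}{\|y\|^2+|t|^2\|r\|^2}\ \le\ \frac{\|q\|^2}{\|q\|^2+\|r\|^2}\ =\ \|q\|^2\ =\ p_{i_0}\ \le\ \frac jn,
\]
and therefore $x^{*}Ax/\|x\|^2\ge\lambda_j-(\lambda_j-\lambda_n)\,p_{i_0}\ge\tfrac{n-j}{n}\lambda_j+\tfrac jn\lambda_n$, as required.

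The step I expect to be the real obstacle, and the one pinning down the sharp constants $\tfrac{n-j}{n}$ and $\tfrac jn$, is recognizing that the amount of ``tilt'' needed to push $V_1$ into the coordinate hyperplane $e_{i_0}^{\perp}$ --- the largest fraction of squared norm that a vector of $S$ must spend outside $V_1$ --- is governed exactly by $p_{i_0}=\|Pe_{i_0}\|^2$, so that averaging the relation $\sum_i p_i=j$ over the $n$ coordinates is precisely what produces the factor $j/n$. No degenerate cases require separate treatment: the construction only needs $j<n$ (so that $r\neq0$), and when $\lambda_j=\lambda_n$ the estimate above correctly collapses to the trivial bound $\lambda_j$.
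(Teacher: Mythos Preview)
Your proof is correct and complete. The Courant--Fischer construction is carried out cleanly: the choice of $i_0$ via the trace identity $\sum_i\|Pe_i\|^2=j$, the $(j{+}1)$-dimensional space $V_1+\operatorname{span}(r)$ intersected with $e_{i_0}^\perp$, the vanishing of the cross term (which holds because $V_1^\perp$ is also $A$-invariant, so $Ar\perp V_1$), and the monotonicity bound on $s/(\|y\|^2+s)$ all check out, and the symmetry $A\mapsto -A$, $j\mapsto n-j$ indeed delivers the second inequality.

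Your route, however, is entirely different from the paper's. The paper does not prove Theorem~\ref{2026-01-15-JR} directly; it recovers it as a weakening of Corollary~\ref{2026-01-15-cor} with $\ell=r=j$, which gives the \emph{sum} bound
\[
(n-j)\lambda_j+j\lambda_n\ \le\ \sum_{k=1}^n\mu_{k,j}\ \le\ (n-j)\lambda_1+j\lambda_{j+1},
\]
and Johnson--Robinson follows immediately from $\max_k\mu_{k,j}\ge\frac1n\sum_k\mu_{k,j}$ and $\min_k\mu_{k,j}\le\frac1n\sum_k\mu_{k,j}$. That sum bound in turn is obtained from the paper's main machinery (Theorems~\ref{thm:renormalized_bounds} and~\ref{thm:aggregate_bound}): analysis of the roots of generalized derivatives of real-rooted polynomials, via a perturbation-and-rational-weights argument. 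So the paper's proof is a corollary of a much larger development, and along the way it yields the genuinely stronger aggregate inequality above. Your argument, by contrast, is short, self-contained, and purely variational; it even singles out a \emph{specific} index $i_0$ (one where $\|Pe_{i_0}\|^2\le j/n$) at which the bound is attained, rather than merely asserting that the maximum works. What you give up is the sum inequality itself, which your method does not see.
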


The main goal of this work is to extend the bounds in Theorem~\ref{thm:spectral_bounds} to bounds for sums of the form $\sum_{j=\ell}^r \sum_{i=1}^n \mu_{i,j}$ for any integers $1 \le \ell \le r \le n-1$. Of course, it is straightforward to add the inequalities \eqref{2026-01-09-ineq} for $j=\ell, \ldots, r$, but the thus obtained bounds are too weak for the applications that we have in mind. Instead, we derive the following result. 

%\begin{theorem}
%\label{thm:aggregate_bound}
%For any $\ell, r$, $1 \le l \le r \le n-1$, the following inequalities hold:
%\begin{align}
%\label{eq:global_upper}
% \sum_{j=\ell}^r \sum_{i=1}^n  \mu_{i,j} &\le (n-1)\sum_{j=\ell}^r \lambda_j + (r-\ell+1)\lambda_{r+1}, \\
%\label{eq:global_lower}
%\sum_{j=\ell}^r \sum_{i=1}^n  \mu_{i,j} &\ge (n-1)\sum_{j=\ell}^r \lambda_{j+1} + (r-\ell+1)\lambda_{\ell}.
%\end{align}
%\end{theorem}
\begin{theorem}
\label{thm:aggregate_bound-intro}
For any $\ell$ and $r$, $1 \le \ell \le r \le n-1$, the following inequalities hold:
\begin{align*}
%\label{eq:global_upper}
 (r-\ell+1)\lambda_{\ell} + (n-1)\sum_{j=\ell}^r \lambda_{j+1}  \le \sum_{k=1}^n \sum_{j=\ell}^r   \mu_{k,j} &\le (n-1)\sum_{j=\ell}^r \lambda_j + (r-\ell+1)\lambda_{r+1}.
\end{align*}
\end{theorem}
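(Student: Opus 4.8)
The plan is to recast the inequalities as a statement about the zeros of a \emph{generalized derivative} of the characteristic polynomial, and then prove that statement by a mass-transfer argument on the simplex of weights. Write $p(x)=\prod_{k=1}^n(x-\lambda_k)$ for the characteristic polynomial of $A$, abbreviating $\lambda_k=\lambda_k(A)$. Theorem~\ref{thm:eigenvector_identity} gives $p_i(\lambda_m)=|v_{m,i}|^2\,p'(\lambda_m)$ for each $m$, where $p_i(x)=\prod_{k=1}^{n-1}(x-\mu_{i,k})$ is the characteristic polynomial of $A_i$; assuming first that the $\lambda_m$ are distinct, Lagrange interpolation at $\lambda_1,\dots,\lambda_n$ upgrades this to
\[
p_i(x)=\sum_{m=1}^n|v_{m,i}|^2\,\frac{p(x)}{x-\lambda_m}=q_{t^{(i)}}(x),\qquad
q_t(x):=\sum_{m=1}^n t_m\!\!\prod_{k\ne m}(x-\lambda_k),
\]
where $t^{(i)}:=(|v_{1,i}|^2,\dots,|v_{n,i}|^2)$ lies in the simplex $\Delta_n=\{t\in\R^n_{\ge0}:\sum_m t_m=1\}$, and the zeros $\xi_1(t)\ge\cdots\ge\xi_{n-1}(t)$ of $q_t$ satisfy $\mu_{i,j}=\xi_j(t^{(i)})$. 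The identity that makes everything collapse is $\sum_{i=1}^n t^{(i)}_m=\|v_m\|^2=1$ for each $m$.

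Thus I would reduce the theorem to the following purely polynomial estimate, which I would state and prove as the ``general result'': for reals $\lambda_1\ge\cdots\ge\lambda_n$, any $t\in\Delta_n$, and $1\le\ell\le r\le n-1$,
\begin{equation}\label{eq:plem}
\sum_{j=\ell}^r\bigl[(1-t_{j+1})\lambda_{j+1}+t_{j+1}\lambda_\ell\bigr]\ \le\ \sum_{j=\ell}^r\xi_j(t)\ \le\ \sum_{j=\ell}^r\bigl[(1-t_j)\lambda_j+t_j\lambda_{r+1}\bigr].
\end{equation}
Granting \eqref{eq:plem}, substituting $t=t^{(i)}$ and summing over $i$ makes the right side become $(n-1)\sum_{j=\ell}^r\lambda_j+(r-\ell+1)\lambda_{r+1}$ and the left side $(r-\ell+1)\lambda_\ell+(n-1)\sum_{j=\ell}^r\lambda_{j+1}$, which is exactly Theorem~\ref{thm:aggregate_bound-intro}; the case of repeated eigenvalues then follows by perturbing $A$ and invoking continuity of the eigenvalues of $A$ and of its principal submatrices.

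To prove \eqref{eq:plem} I would work with $R_t(x):=q_t(x)/p(x)=\sum_m t_m/(x-\lambda_m)$, assuming $\lambda_1>\cdots>\lambda_n$. Since $R_t$ is strictly decreasing on each $(\lambda_{j+1},\lambda_j)$, interlacing $\lambda_{j+1}\le\xi_j(t)\le\lambda_j$ holds with $\xi_j(t)$ its zero there, and the coefficient of $x^{n-2}$ in $q_t$ yields the identity $\sum_{j=1}^{n-1}\xi_j(t)=\sum_{m=1}^n(1-t_m)\lambda_m$. Two facts drive the argument. (i) If $t$ is supported on the block $\{\ell,\ell+1,\dots,r+1\}$, then $q_t=\bigl(\prod_{m\notin[\ell,r+1]}(x-\lambda_m)\bigr)\widetilde q(x)$ where $\widetilde q$ is the generalized derivative of $\prod_{k=\ell}^{r+1}(x-\lambda_k)$, so $\{\xi_\ell(t),\dots,\xi_r(t)\}$ is exactly the zero set of $\widetilde q$ and hence $\sum_{j=\ell}^r\xi_j(t)=\sum_{k=\ell}^{r+1}(1-t_k)\lambda_k$; since $\sum_{k=\ell}^{r+1}t_k=1$, this common value equals both outer expressions in \eqref{eq:plem}, so \eqref{eq:plem} holds \emph{with equality} on such $t$. (ii) Transferring mass from any coordinate $a\notin\{\ell,\dots,r+1\}$ to coordinate $r+1$ does not increase the deficit $G(t):=\sum_{j=\ell}^r[(1-t_j)\lambda_j+t_j\lambda_{r+1}]-\sum_{j=\ell}^r\xi_j(t)$: along $t(s)=t+s\,t_a(e_{r+1}-e_a)$, $s\in[0,1]$, the perturbation $R_{t(s)}-R_t=s\,t_a\bigl(\tfrac1{x-\lambda_{r+1}}-\tfrac1{x-\lambda_a}\bigr)$ is non-negative on every $(\lambda_{j+1},\lambda_j)$ with $\ell\le j\le r$ — this is the one place the hypothesis $a\notin[\ell,r+1]$ enters, fixing which side of the interval $\lambda_a$ sits on — so each such $\xi_j(t(s))$ is non-decreasing in $s$, while the affine term $\sum_{j=\ell}^r[(1-t_j)\lambda_j+t_j\lambda_{r+1}]$, involving neither $t_a$ nor $t_{r+1}$, is constant.

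Performing the transfer (ii) for each $a\notin\{\ell,\dots,r+1\}$ drives $t$ to a block-supported weight vector without increasing $G$, so $G(t)\ge G(\text{block})=0$; that is the upper bound in \eqref{eq:plem}. The lower bound follows by applying the upper bound to the reversed data $\widehat\lambda_k:=-\lambda_{n+1-k}$ with weights $\widehat t_k:=t_{n+1-k}$ on the range $[n-r,\,n-\ell]$, using $q_{\widehat t}(x)=\pm q_t(-x)$ and $\xi_j(\widehat t)=-\xi_{n-j}(t)$. (For $\ell=r$, \eqref{eq:plem} already reproves Theorem~\ref{thm:spectral_bounds} after summing over $i$, and the single-index case is checked directly from $R_t$.) The step I expect to be the real obstacle is \eqref{eq:plem} itself: naive routes — summing the $\ell=r$ bounds, or trying to use convexity of $\sum_{j=\ell}^r\xi_j(\cdot)$ on $\Delta_n$ — are too lossy, and the workable idea is the reduction above, whose crux is choosing the transfer target to be $\lambda_{r+1}$ (so the affine part is left untouched) rather than, say, $\lambda_\ell$; the remaining care is the routine limiting passage for repeated eigenvalues and for weight vectors with zero entries.
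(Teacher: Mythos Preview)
Your proof is correct, and the overall architecture---reduce to a polynomial inequality about generalized derivatives, then sum over the $n$ principal submatrices using $\sum_{i}|v_{m,i}|^{2}=1$---matches the paper's. The derivation of the secular representation $p_i=q_{t^{(i)}}$ via Theorem~\ref{thm:eigenvector_identity} plus Lagrange interpolation is a valid alternative to the paper's adjugate computation (Lemma~\ref{lem:golub_secular}); the summation step is the same in both.

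Where you genuinely diverge is in the proof of the polynomial estimate~\eqref{eq:plem}, which coincides with the paper's Corollary~\ref{2026-01-14-cor}. The paper first establishes the \emph{equal-weight} case (Proposition~\ref{lem:equal_weights}) by moving the poles themselves---sending $\lambda_1,\dots,\lambda_{\ell-1}$ to $+\infty$ and collapsing $\lambda_{r+2},\dots,\lambda_n$ onto $\lambda_{r+1}$, then reading off the sum of the surviving critical points from Vieta---and only afterwards obtains general weights by replacing each pole with a rational-weight cluster and passing to the limit. Your argument instead fixes the poles and moves the \emph{weights}: you transfer mass from each coordinate outside $\{\ell,\dots,r+1\}$ to coordinate $r+1$, observe that the affine upper bound is untouched (it involves only $t_\ell,\dots,t_r$) while each $\xi_j$ with $\ell\le j\le r$ can only increase, and land on a block-supported weight where Vieta gives equality. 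This is a cleaner, one-step route to Corollary~\ref{2026-01-14-cor}, and the choice of $r+1$ (rather than any index in $[\ell,r]$) as the mass sink is exactly the right one, for the reason you identify. The trade-off is that the paper's detour through Proposition~\ref{lem:equal_weights} also delivers the sharper Theorem~\ref{thm:renormalized_bounds} with the $1/U_\ell$ and $1/L_{r+1}$ denominators, which your mass-transfer argument does not obviously reach; for Theorem~\ref{thm:aggregate_bound-intro} itself, however, only the relaxed form is needed, so nothing is lost.
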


Applying the bounds in Theorem~\ref{thm:aggregate_bound-intro} with $\ell=r=j$, one recovers the bounds in Theorem~\ref{thm:spectral_bounds}.
A corollary of this result is the following.

\begin{corollary}
\label{2026-01-15-cor}
For any $\ell$ and $r$, $1 \le \ell \le r \le n-1$, the following inequalities hold:
$$
(n-\ell)\lambda_\ell  + (n-1)\sum_{j=\ell+1}^{r} \lambda_j + r\lambda_n
\le \sum_{k=1}^n \sum_{j=\ell}^r  \mu_{k,j} 
\le (n-\ell)\lambda_1 + (n-1)\sum_{j=\ell+1}^{r} \lambda_j + r\lambda_{r+1} .
$$
\end{corollary}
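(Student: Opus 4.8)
The plan is to derive both inequalities directly from Theorem~\ref{thm:aggregate_bound-intro}, not by trying to weaken the bound one gets by applying that theorem to the block $\{\ell,\dots,r\}$ itself. Indeed, the bound in the corollary and the bound of Theorem~\ref{thm:aggregate_bound-intro} for the block $[\ell,r]$ are in general incomparable (as one sees already for $n=3$, $\ell=r=1$), so a purely ``local'' argument cannot work. The right idea is to write the index block $\{\ell,\ell+1,\dots,r\}$ as a set difference of two blocks anchored at an endpoint of $\{1,\dots,n-1\}$ — at $1$ for the upper estimate, at $n-1$ for the lower one — apply Theorem~\ref{thm:aggregate_bound-intro} to each block, and telescope.

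For the upper bound I would write
\[
\sum_{k=1}^n \sum_{j=\ell}^r \mu_{k,j} \;=\; \sum_{k=1}^n \sum_{j=1}^{r} \mu_{k,j} \;-\; \sum_{k=1}^n \sum_{j=1}^{\ell-1} \mu_{k,j},
\]
apply the upper bound of Theorem~\ref{thm:aggregate_bound-intro} to the first sum (block $[1,r]$) and the lower bound of Theorem~\ref{thm:aggregate_bound-intro} to the second sum (block $[1,\ell-1]$, valid when $\ell\ge 2$), obtaining
\[
\sum_{k=1}^n \sum_{j=\ell}^r \mu_{k,j} \;\le\; \Bigl( (n-1)\sum_{j=1}^r \lambda_j + r\lambda_{r+1} \Bigr) - \Bigl( (\ell-1)\lambda_1 + (n-1)\sum_{j=1}^{\ell-1}\lambda_{j+1} \Bigr).
\]
Then $\sum_{j=1}^r\lambda_j - \sum_{j=1}^{\ell-1}\lambda_{j+1} = \sum_{j=1}^r\lambda_j - \sum_{j=2}^{\ell}\lambda_j = \lambda_1 + \sum_{j=\ell+1}^r\lambda_j$, and collecting the $\lambda_1$ terms gives exactly $(n-\ell)\lambda_1 + (n-1)\sum_{j=\ell+1}^r\lambda_j + r\lambda_{r+1}$. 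The boundary case $\ell=1$ makes the subtracted sum empty, and the claimed bound reduces to Theorem~\ref{thm:aggregate_bound-intro} for the block $[1,r]$.

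For the lower bound I would symmetrically write
\[
\sum_{k=1}^n \sum_{j=\ell}^r \mu_{k,j} \;=\; \sum_{k=1}^n \sum_{j=\ell}^{n-1} \mu_{k,j} \;-\; \sum_{k=1}^n \sum_{j=r+1}^{n-1} \mu_{k,j},
\]
apply the lower bound of Theorem~\ref{thm:aggregate_bound-intro} to the block $[\ell,n-1]$ and the upper bound to the block $[r+1,n-1]$ (valid when $r\le n-2$; the case $r=n-1$ is again immediate), and telescope $\sum_{j=\ell+1}^n\lambda_j - \sum_{j=r+1}^{n-1}\lambda_j = \sum_{j=\ell+1}^r\lambda_j + \lambda_n$ together with $(n-1)\lambda_n - (n-1-r)\lambda_n = r\lambda_n$ to reach $(n-\ell)\lambda_\ell + (n-1)\sum_{j=\ell+1}^r\lambda_j + r\lambda_n$. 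Alternatively, once the upper bound is established, the lower bound follows by applying it to $-A$, using $\lambda_i(-A)=-\lambda_{n+1-i}(A)$ and $\mu_{k,j}(-A)=-\mu_{k,n-j}(A)$ with the pair $(n-r,\,n-\ell)$ in place of $(\ell,r)$. The only place requiring care is the index bookkeeping in the telescoping identities and the two trivial boundary cases $\ell=1$ and $r=n-1$; conceptually the one step to get right is the choice of decomposition (anchor at $1$ for the upper bound, at $n-1$ for the lower bound), after which everything is routine.
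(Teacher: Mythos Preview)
Your proposal is correct and matches the paper's own argument essentially line for line: the paper (in the body, Corollary~\ref{2026-01-15-corBody}) derives the upper bound by writing $[\ell,r]=[1,r]\setminus[1,\ell-1]$ and the lower bound by writing $[\ell,r]=[\ell,n-1]\setminus[r+1,n-1]$, applying Theorem~\ref{thm:aggregate_bound-intro} with the appropriate sign to each piece, and telescoping exactly as you do. Your explicit treatment of the boundary cases $\ell=1$, $r=n-1$ and the optional $A\mapsto -A$ symmetry for the lower bound are small additions not spelled out in the paper, but they change nothing substantive.
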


Note that by taking $\ell = r = j$ in Corollary~\ref{2026-01-15-cor}, we obtain
$$
(n-j)\lambda_j + j\lambda_n  \le \sum_{k=1}^n \mu_{k,j} 
\le (n-j)\lambda_1 + j\lambda_{j+1},
$$
which is a stronger statement than Theorem~\ref{2026-01-15-JR}.

The bounds in Theorem~\ref{thm:aggregate_bound-intro} are a consequence of the following theorem which is one of the main results in this paper.

\begin{theorem}
\label{thm:main_weighted}
Let $\lambda_1  \ge \dots \ge \lambda_n$ be real numbers and let $w_1, \dots, w_n$ be non-negative weights with sum $1$.
Let $\mu_1 \ge \dots \ge \mu_{n-1}$ be the (real) roots of the polynomial
\begin{align*}
%\label{eq:rational_eq}
%\sum_{i=1}^n \frac{w_i}{\lambda_i - x} = 0.
p(x):= \sum_{i=1}^n w_i \prod_{\substack{j=1 \\ j \neq i}}^n (\lambda_j - x)
\end{align*}
Then for any $\ell$ and $r$, $1 \le \ell \le r \le n-1$, the following inequalities hold:
\begin{align*} 
%\label{eq:renormalized_upper}
\sum_{j=\ell}^r \lambda_{j+1} + \sum_{j=\ell}^r \frac{w_{j+1}}{L_{r+1}} (\lambda_\ell - \lambda_{j+1})  \le  \sum_{j=\ell}^r \mu_j \le \sum_{j=\ell}^r \lambda_j - \sum_{j=\ell}^r \frac{w_j}{U_{\ell}} (\lambda_j - \lambda_{r+1}),
\end{align*}
provided that $L_{r+1}$ and $U_{\ell}$ are non-zero,  where
$$
U_{\ell} := \sum_{i=\ell}^n w_i \quad \mbox{and} \quad L_{r+1} := \sum_{i=1}^{r+1} w_i.
$$\end{theorem}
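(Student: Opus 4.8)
The plan is to prove the upper bound first and then obtain the lower bound from it by a reflection; the upper bound is proved by induction on $n$. Put $q(x)=\prod_{j=1}^{n}(\lambda_j-x)$, so that off the $\lambda_j$'s one has $p(x)/q(x)=f(x):=\sum_{i=1}^{n}w_i/(\lambda_i-x)$ and the $\mu_k$ are exactly the zeros of $f$. The asserted inequalities are closed conditions whose right-hand sides depend continuously on $(\lambda,w)$ wherever $U_\ell,L_{r+1}\neq 0$, and the ordered real zeros of $p$ depend continuously on $(\lambda,w)$; so by a routine limiting argument I may assume $\lambda_1>\cdots>\lambda_n$ and $w_1,\dots,w_n>0$. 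Then on each interval $(\lambda_{k+1},\lambda_k)$ the function $f$ is strictly increasing from $-\infty$ to $+\infty$, so $\mu_k$ is its unique zero there; in particular $\mu_\ell,\dots,\mu_r$ are precisely the zeros of $p$ lying in $(\lambda_{r+1},\lambda_\ell)$.

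The only analytic input is the elementary remark that if $g$ and $h$ are strictly increasing on an interval, each with a zero there, and $g>h$ on that interval, then the zero of $g$ is at most the zero of $h$. I would use it in two ways. \emph{Dropping the largest node:} letting $\hat\mu_1\geq\cdots\geq\hat\mu_{n-2}$ be the zeros of $\hat f(x):=\sum_{i=2}^{n}w_i/(\lambda_i-x)$, one has for $2\leq j\leq n-1$ and $x\in(\lambda_{j+1},\lambda_j)$ that $f(x)-\hat f(x)=w_1/(\lambda_1-x)>0$ (since $x<\lambda_j\leq\lambda_2<\lambda_1$), hence $\mu_j\leq\hat\mu_{j-1}$. \emph{Merging the two smallest nodes:} letting $\mu^m_1\geq\cdots\geq\mu^m_{n-2}$ be the zeros of $f^m(x):=\sum_{i=1}^{n-2}w_i/(\lambda_i-x)+(w_{n-1}+w_n)/(\lambda_{n-1}-x)$, one has for $1\leq j\leq n-2$ and $x\in(\lambda_{j+1},\lambda_j)$ that $f(x)-f^m(x)=w_n(\lambda_{n-1}-\lambda_n)/\bigl[(\lambda_n-x)(\lambda_{n-1}-x)\bigr]>0$, since $x>\lambda_{n-1}>\lambda_n$ makes the denominator a product of two negative numbers; hence $\mu_j\leq\mu^m_j$.

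Now I induct on $n$, the base case $n=2$ being the identity $\mu_1=(1-w_1)\lambda_1+w_1\lambda_2$. Let $n\geq 3$. If $\ell\geq 2$, summing $\mu_j\leq\hat\mu_{j-1}$ over $\ell\leq j\leq r$ and invoking the inductive hypothesis for the $(n-1)$-node system with nodes $\lambda_{k+1}$ and weights $w_{k+1}/(1-w_1)$ at indices $(\ell-1,r-1)$ gives the claim: since $\hat U_{\ell-1}=U_\ell/(1-w_1)$, the factor $1-w_1$ cancels and one lands exactly on $\sum_{j=\ell}^{r}\lambda_j-U_\ell^{-1}\sum_{j=\ell}^{r}w_j(\lambda_j-\lambda_{r+1})$. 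If $\ell=1$ and $r\leq n-2$, summing $\mu_j\leq\mu^m_j$ over $1\leq j\leq r$ and invoking the inductive hypothesis for the merged $(n-1)$-node system at indices $(1,r)$ works the same way, now using that $U^m_1=1=U_1$ and $w^m_j=w_j$, $\lambda^m_j=\lambda_j$ for $j\leq r$, and $\lambda^m_{r+1}=\lambda_{r+1}$. If $\ell=1$ and $r=n-1$, Vieta's formula for $p$ gives $\sum_{j=1}^{n-1}\mu_j=\sum_{i=1}^{n}\lambda_i-\sum_{i=1}^{n}w_i\lambda_i$, and a one-line computation using $\sum_{j=1}^{n-1}w_j=1-w_n$ shows this equals $\sum_{j=1}^{n-1}\lambda_j-\sum_{j=1}^{n-1}w_j(\lambda_j-\lambda_n)$, so the inequality is an equality. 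These three cases exhaust all $(\ell,r)$ with $1\leq\ell\leq r\leq n-1$.

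Finally, for the lower bound, apply the proven upper bound to the reflected data $\tilde\lambda_i:=-\lambda_{n+1-i}$ and $\tilde w_i:=w_{n+1-i}$: one has $\tilde p(x)=(-1)^{n-1}p(-x)$, hence $\tilde\mu_j=-\mu_{n-j}$, and substituting $\tilde\ell=n-r$, $\tilde r=n-\ell$ and simplifying (using $\tilde U_{n-r}=L_{r+1}$) converts the upper bound for the reflected system into exactly the lower bound for the original one. The main obstacle will not be any single hard estimate — the comparison lemmas are trivial — but the organization: making the case split genuinely exhaustive, and checking in each reduction that $U_\ell$ and the index $r+1$ transform correctly under dropping or merging a node (in particular, that merging the two smallest nodes leaves $U_\ell$ unchanged), so that the inductive hypothesis delivers the target expression on the nose.
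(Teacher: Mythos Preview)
Your argument is correct and takes a genuinely different route from the paper's. The paper proceeds in two stages: first it proves the equal-weight case $w_i=1/n$ (Proposition~\ref{lem:equal_weights}) by a continuous deformation of the poles --- sending $\lambda_1,\dots,\lambda_{\ell-1}$ to $+\infty$ and coalescing $\lambda_{r+2},\dots,\lambda_n$ at $\lambda_{r+1}$, then reading off the bound from Vieta --- and second it reduces rational weights $w_i=k_i/N$ to the equal-weight case by splitting each pole into a cluster of $k_i$ nearby equal-weight poles and passing to the limit, with a further density argument for irrational weights. You instead run a direct induction on $n$: the comparison lemmas ``drop the top pole'' and ``merge the bottom two poles'' are exactly the discrete analogues of the paper's deformations, but they let you step down by one node at a time without ever passing through the equal-weight case or a rational approximation. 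Your approach is shorter and more self-contained; the paper's approach has the mild advantage that the equal-weight proposition is isolated as a standalone statement about critical points of real-rooted polynomials (though of course it also follows from the general theorem by specialization). Both routes use the same reflection $\lambda_i\mapsto-\lambda_{n+1-i}$ to derive the lower bound from the upper bound, and both close with a continuity/limit argument to handle repeated poles or zero weights.
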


The notion of majorization between vectors in $\R^N$ is a powerful tool for capturing relationships between zeros and critical points of polynomials, see \cite{katsoprinakis:2007}. Similarly, it plays a big role in describing relationships between the eigenvalues of Hermitian matrices and those of its principal submatrices. 

For $x \in \mathbb{R}^N$, let $x^{\downarrow}$ denote the vector with the same entries as $x$, but sorted in non-increasing order. (Similarly, $x^{\uparrow}$ denotes the vector with the same entries as $x$, but sorted in non-decreasing order.)

For $x, y \in \mathbb{R}^N$, we say that $x$ {\it majorizes} $y$, denoted as $x \succ y$, if the following inequalities hold:
\begin{equation}
    \sum_{i=1}^k x^{\downarrow}_i \ge \sum_{i=1}^k y^{\downarrow}_i, \quad \text{for all } k = 1, \dots, N,
\end{equation}
and the last inequality holding with equality. A matrix $D \in \mathbb{R}^{N \times N}$ is called {\it doubly stochastic} if all its entries are non-negative and every row and every column sums to $1$. 
For more information about majorization, see \cite{Marchall:2011}. 

The classical example of majorization in matrix theory is a result by Schur, see Exercise~II.1.12 in \cite{bhatia1997}. It states that for any $n \times n$ Hermitian matrix $A$, 
\begin{align}
\label{2026-01-13-schur}
\lambda(A) \succ \mbox{diag\,}(A).
\end{align}
In fact, a necessary and sufficient condition for there to be $n \times n$ Hermitian matrix with eigenvalues $x$ and diagonal $y$ is that $x \succ y$, see Theorem 4.3.48 in \cite{horn2013}. 

Other classical majorization relationships are the results by Fan and Lidskii, see Theorem 4.3.47 in \cite{horn2013}. Combined they state that for any two $n \times n$ Hermitian matrices $A$ and $B$, we have
$$
\lambda(A) + \lambda(B) \succ \lambda(A+B) \succ \lambda(A) + \lambda^{\uparrow}(B).
$$

%More generally, for $\alpha \subseteq \{1,\ldots, n\}$, let $A(\alpha)$ denote the principal submatrix of $A$ obtained at the intersection of the rows and columns with indices in $\alpha$. If $\alpha_1,\ldots, \alpha_m$ is a partition of $\{1,\ldots, n\}$, then 
%$$
%\lambda(A) \succ (\lambda(A(\alpha_1)), \ldots, \lambda(A(\alpha_m))).
%$$

%In this paper, we show that the inequalities in a weaker version of Theorem~\ref{thm:main_weighted} are in fact majorization relationships, see Subsection~\ref{2026-01-10Subsec}.\footnote{I don't like this sentence.}

Let $X_m(A)$, $1 \le m \le n$, be the vector containing the eigenvalues of all $m \times m$ principal submatrices of 
$A$, counting repetitions.  As a consequence of Theorem~\ref{thm:aggregate_bound-intro} we obtain the second main result of the paper.
\begin{theorem}[Spectral Hierarchy]
%\label{thm:hierarchy}
Let $A$ be an $n \times n$ Hermitian matrix. For any integers $1 \le k \le m \le n$, we have
\begin{align} 
\label{ab:eq:general_hierarchy}
    \underbrace{X_m(A) \cup \dots \cup X_m(A)}_{\binom{m-1}{k-1} \text{ times}} \succ \underbrace{X_k(A) \cup \dots \cup X_k(A)}_{\binom{n-k}{m-k} \text{ times}}.
\end{align}
\end{theorem}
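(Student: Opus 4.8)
The idea is to reduce the hierarchy \eqref{ab:eq:general_hierarchy} to a one-step statement relating $X_{m}(A)$ to $X_{m-1}(A)$, and then iterate from $m$ down to $k$, tracking the multiplicities carefully. So first I would establish the base case $m = k+1$: namely that
\[
X_{k+1}(A) \succ \underbrace{X_k(A) \cup \dots \cup X_k(A)}_{n-k \text{ times, restricted appropriately}}
\]
— more precisely, that the eigenvalues of a fixed $(k+1)\times(k+1)$ principal submatrix $B$ majorize, after suitable scaling, the union of the eigenvalues of all $k\times k$ principal submatrices of $B$. This is exactly where Theorem~\ref{thm:aggregate_bound-intro} enters: applied to the $(k+1)\times(k+1)$ matrix $B$ in place of $A$, with $\ell = 1$ and $r$ ranging over $1, \dots, k$, it gives precisely the partial-sum inequalities that define majorization between $X_{k+1}(B)$ (each entry taken $k+1-1 = k$ times on the left, matching the count of $k\times k$ submatrices) and $X_k(B)$. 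The equality of total sums is the trace identity: $\sum_i \operatorname{tr}(B_i) = (k) \operatorname{tr}(B)$ since each diagonal entry of $B$ is omitted exactly once among the $k+1$ submatrices, i.e.\ appears in $k$ of them.

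Second, I would globalize the one-step relation. A fixed $m\times m$ principal submatrix of $A$ is indexed by an $m$-subset $S \subseteq \{1,\dots,n\}$; its $(m-1)\times(m-1)$ principal submatrices are indexed by the $(m-1)$-subsets of $S$. Each $(m-1)$-subset $T \subseteq \{1,\dots,n\}$ is contained in exactly $n - (m-1)$ of the $m$-subsets $S$. Hence, summing the one-step majorization $X_m(A[S]) \succ (\text{copies of }) X_{m-1}(A[S])$ over all $m$-subsets $S$ — and using that a disjoint union of majorizations is a majorization — yields
\[
\underbrace{X_m(A) \cup \dots \cup X_m(A)}_{(m-1)\text{ times}} \succ \underbrace{X_{m-1}(A) \cup \dots \cup X_{m-1}(A)}_{(n-m+1)\text{ times}} .
\]
Third, I would iterate this from level $m$ down to level $k$. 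Composing the step from level $j$ to level $j-1$ introduces a factor of $(j-1)$ of copies on the left and $(n-j+1)$ on the right; multiplying these telescoping products from $j = m$ down to $j = k+1$ gives left-multiplicity
$
\prod_{j=k+1}^{m}(j-1) = (m-1)!/(k-1)!
$
and right-multiplicity
$
\prod_{j=k+1}^{m}(n-j+1) = (n-k)!/(n-m)!,
$
and after cancelling the common factor (total sums on both sides are equal, so we may scale the multiplicities down by their gcd) one arrives at $\binom{m-1}{k-1}$ copies of $X_m(A)$ majorizing $\binom{n-k}{m-k}$ copies of $X_k(A)$, which is \eqref{ab:eq:general_hierarchy}. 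Here I use the elementary facts that $x \succ y$ and $x' \succ y'$ imply $(x \cup x') \succ (y \cup y')$, that $t$ copies of $x$ majorize $t$ copies of $y$ iff $x \succ y$, and that majorization is transitive, so that the chain of one-step relations composes.

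\textbf{The main obstacle.} The genuine mathematical content is entirely in the base case, i.e.\ in checking that the inequalities of Theorem~\ref{thm:aggregate_bound-intro} (with $\ell=1$) really do coincide with the majorization partial sums for $X_{k+1}$ versus $X_k$ once the correct weights $w_i$ are identified — these weights are $|v_{i,j}|^2$-type quantities from Theorem~\ref{thm:eigenvector_identity}, and one must verify that the $\mu_{i,j}$ appearing there are exactly the eigenvalues of the $k\times k$ principal submatrices, with the right indexing so that ``largest $j$ eigenvalues'' matches up across all submatrices simultaneously. The bookkeeping of multiplicities in the globalization and iteration steps is routine once the counting identities ``each $(m-1)$-set lies in $n-m+1$ many $m$-sets'' and ``each diagonal entry is omitted in exactly one of $m$ submatrices'' are in hand; the only subtlety is to confirm that the equality case in the definition of majorization (equal total sums) propagates through all the unions and iterations, which follows from the trace identities at each level.
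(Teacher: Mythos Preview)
Your overall architecture is right and is essentially the paper's: establish the one-step majorization between adjacent levels, globalize by taking unions over all principal submatrices of the larger size, and then iterate from level $m$ down to level $k$, cancelling the common factor $(m-k)!$ at the end. The multiplicity bookkeeping and the use of the scaling and cancellation properties are correct.

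There is, however, a genuine gap in your base case. You assert that Theorem~\ref{thm:aggregate_bound-intro}, applied to a $(k+1)\times(k+1)$ matrix $B$ with $\ell=1$ and $r=1,\dots,k$, ``gives precisely the partial-sum inequalities that define majorization'' between $k$ copies of $\lambda(B)$ and $X_k(B)$. It does not. It gives those inequalities only at the \emph{nodal} indices $p = r(k+1)$, i.e.\ at the ends of the layers of $X_k(B)$ when sorted. Majorization requires the inequality $\sum_{i\le p}(Y)_i \ge \sum_{i\le p}(X_k(B))_i$ for \emph{every} $p \in \{1,\dots,k(k+1)\}$, and between the nodal points this does not follow from the aggregate bound alone. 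The paper closes this gap in Lemma~\ref{lem:nodal_reduction} with an interlacing argument: since every $\mu_{i,j}$ lies in $[\lambda_{j+1},\lambda_j]$, the running difference $\Delta_p$ is unimodal on each inter-nodal segment (it rises while the $Y$-entries equal $\lambda_j$ and falls once they drop to $\lambda_{j+1}$), and unimodality together with non-negativity at the two nodal endpoints forces $\Delta_p \ge 0$ throughout. This is the missing step, and it is precisely where the Cauchy interlacing is used beyond what Theorem~\ref{thm:aggregate_bound-intro} already encodes.

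A smaller point: your ``main obstacle'' paragraph is misdirected. Theorem~\ref{thm:aggregate_bound-intro} is already stated for Hermitian matrices, so there is no need to go back through weights $|v_{i,j}|^2$ or Theorem~\ref{thm:eigenvector_identity}; that reduction has been carried out in advance. The real obstacle is the nodal-to-full extension just described.
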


Here, we abuse the union notation to denote the concatenation of vectors.  On the one hand, when $k=1$ and $m=n$, majorization \eqref{ab:eq:general_hierarchy} recovers \eqref{2026-01-13-schur}. On the other hand,  let $P_m(A)$ denote the product of all $m \times m$ principal minors of $A$, $1 \le m \le n$.  Example~II.3.16 in \cite{bhatia1997} shows that if $x \succ y$, where $x, y \in \mathbb{R}^N_+$, then $\prod_{i=1}^N x_i \le \prod_{i=1}^N y_i$.
Thus, for a positive semidefinite matrix $A$, majorization~\eqref{ab:eq:general_hierarchy} implies that
$$
P_{m}(A)^{\binom{m-1}{k-1}} \le P_k(A)^{\binom{n-k}{m-k}}, \mbox{ for all } m = 1,\ldots, n-1.
$$
Raising both sides to the power $1/Z$, where 
$$
Z:=\binom{m-1}{k-1}\binom{n-1}{m-1} = \binom{n-k}{m-k}\binom{n-1}{k-1},
$$ 
leads to 
\begin{equation}
    P_{m}(A)^{1/\binom{n-1}{m-1}} \le P_k(A)^{1/\binom{n-1}{k-1}}, \mbox{ for all } m = 1,\ldots, n-1.
\end{equation}
These are the well-known Szasz's inequalities, see \cite[Theorem 7.8.11]{horn2013}.

%\section{Preliminaries} \label{sec:prelim}
%
%
%\begin{lemma}[Combinatorial Trace Identity] \label{lem:trace_identity}
%Let $A$ be an $n \times n$ matrix. For any integer $1 \le m \le n$, the sum of the traces of all principal submatrices of size $m \times m$ is given by:
%\begin{equation}
%    \sum_{M \in \mathcal{M}_m(A)} \tr(M) = \binom{n-1}{m-1} \tr(A).
%\end{equation}
%\end{lemma}
%
%\begin{proof}
%The trace of a matrix is the sum of its diagonal entries. We count the contribution of each diagonal element $A_{ii}$ of $A$ to the total sum. To form a principal submatrix of size $m$ that includes $A_{ii}$, we must choose $m-1$ additional indices from the remaining $n-1$ indices. Thus, each $A_{ii}$ appears in exactly $\binom{n-1}{m-1}$ submatrices. Summing over all $i=1, \dots, n$ yields the result.
%\end{proof}

%\begin{theorem}[Szasz's Inequality]
%Let $A$ be an $n \times n$ positive definite Hermitian matrix. For each $k \in \{1, \dots, n\}$, let $P_k(A)$ denote the product of the determinants of all $\binom{n}{k}$ principal submatrices of size $k \times k$:
%\begin{equation}
%    P_k(A) = \prod_{M \in \mathcal{M}_k(A)} \det(M).
%\end{equation}
%Then, for each $k = 1, \dots, n-1$, the sequence of normalized determinants is decreasing \cite[Theorem 7.8]{horn2013}:
%\begin{equation}
%    P_{k+1}(A)^{1/\binom{n-1}{k}} \le P_k(A)^{1/\binom{n-1}{k-1}}.
%\end{equation}
%\end{theorem}

\section{The main result} 
\label{sec:3}

This section is dedicated to the proof of Theorem~\ref{thm:renormalized_bounds}. It gives bounds on the sum of several consecutive zeros of a generalized derivative of a polynomial with real roots. In Corollary~\ref{2026-01-14-cor}, Proposition~\ref{lem:equal_weights}, and Corollary~\ref{cor:relaxed_equal}, we give relaxations of the bounds and particular cases, that may be more useful in practice. At the end, we interpret the bounds in Corollary~\ref{cor:relaxed_equal} as majorization relationships.

The following lemma is standard.
\begin{lemma}
Let $\lambda_1 \ge \dots \ge \lambda_n$ 
be real numbers (poles) and let 
$$
w_1, \dots, w_n \mbox{ be non-negative weights with } \sum_{i=1}^n w_i = 1. 
$$
The polynomial 
\begin{equation}
    \label{eq:rational_eq}
p(x) :=   \sum_{i=1}^n w_i \prod_{\substack{j=1 \\ j \neq i}}^n (x-\lambda_j)
\end{equation}
has $(n-1)$ real roots, $\mu_1 \ge \dots \ge \mu_{n-1}$, that interlace with the poles
\begin{align}
\label{2026-01-09-strictInterl}
    \lambda_1 \ge \mu_1 \ge \lambda_2 \ge \mu_2 \ge \dots \ge \mu_{n-1} \ge \lambda_n.
\end{align}
If the poles are distinct, then the interlacing is strict.
\end{lemma}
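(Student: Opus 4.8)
The plan is to reduce everything to the generic situation---distinct poles and strictly positive weights---where the result can be read off from sign changes, and then recover the general (weak) interlacing by a perturbation argument. First I would record the easy structural fact: since the coefficient of $x^{n-1}$ in \eqref{eq:rational_eq} equals $\sum_{i=1}^n w_i = 1$, the polynomial $p$ is monic of degree exactly $n-1$, hence has $n-1$ roots in $\C$ counted with multiplicity; it therefore suffices to exhibit $n-1$ real ones in the right places.

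Next, assume temporarily that $\lambda_1 > \dots > \lambda_n$ and that all $w_i>0$. For each index $k$, every term of \eqref{eq:rational_eq} with index $i\ne k$ contains the factor $(x-\lambda_k)$ and so vanishes at $x=\lambda_k$; thus
$$
p(\lambda_k) \;=\; w_k \prod_{j\ne k}(\lambda_k-\lambda_j),
$$
and the product on the right has exactly $k-1$ negative factors (those with $j<k$) and $n-k$ positive ones, so $\operatorname{sign} p(\lambda_k) = (-1)^{k-1}\ne 0$. Hence $p$ strictly alternates in sign along $\lambda_n<\dots<\lambda_1$, and the intermediate value theorem produces a root $\mu_k\in(\lambda_{k+1},\lambda_k)$ for each $k=1,\dots,n-1$. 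These lie in pairwise disjoint open intervals and, by the degree count, they are all of the roots of $p$; in particular they are real and simple, and $\lambda_1>\mu_1>\dots>\mu_{n-1}>\lambda_n$. This settles the strict-interlacing claim.

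For the general case I would perturb. For $\e\in(0,1]$ set $\lambda_i^{(\e)} := \lambda_i+(n-i)\e$ and $w_i^{(\e)} := (w_i+\e)/(1+n\e)$, so the new poles are strictly decreasing and the new weights are strictly positive with sum $1$. Let $p_\e$ be the polynomial \eqref{eq:rational_eq} formed from $(\lambda_i^{(\e)},w_i^{(\e)})$; by the previous paragraph $p_\e(x)=\prod_{k=1}^{n-1}(x-\mu_k^{(\e)})$ with $\lambda_{k+1}^{(\e)}<\mu_k^{(\e)}<\lambda_k^{(\e)}$, so in particular every $\mu_k^{(\e)}$ lies in the fixed compact interval $[\lambda_n,\lambda_1+n]$. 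The coefficients of $p_\e$ are polynomial functions of the $\lambda_i^{(\e)}$ and $w_i^{(\e)}$, hence converge to those of $p$ as $\e\to 0^+$. Choosing a sequence $\e_m\to 0^+$ along which each $\mu_k^{(\e_m)}$ converges, say to $\mu_k$, and letting $m\to\infty$ in $p_{\e_m}(x)=\prod_{k=1}^{n-1}(x-\mu_k^{(\e_m)})$ yields $p(x)=\prod_{k=1}^{n-1}(x-\mu_k)$. Thus the $\mu_k$ are precisely the roots of $p$, they are real, and the weak inequalities $\lambda_{k+1}\le \mu_k\le \lambda_k$ survive the limit, which is exactly \eqref{2026-01-09-strictInterl}.

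The only point I expect to need care is this limiting step: a priori a root of $p_\e$ could run off to infinity or the factorization could degenerate in the limit. Both are ruled out by the uniform bound $\mu_k^{(\e)}\in[\lambda_n,\lambda_1+n]$ together with the fact that $p_\e$ and $p$ are all monic of the same degree $n-1$, so matching the factored form against the coefficientwise convergence $p_{\e_m}\to p$ forces the correct count of (real) roots in the limit. Everything else is the routine sign bookkeeping carried out above.
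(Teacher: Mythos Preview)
Your proof is correct and follows essentially the same two-step architecture as the paper: establish strict interlacing in the generic case (distinct poles, positive weights), then pass to the general case by continuity of roots in the coefficients. The only cosmetic difference is that the paper locates the roots in the generic case via the rational function $f(x)=\sum_i w_i/(x-\lambda_i)$ and its monotonicity on each $(\lambda_{i+1},\lambda_i)$, whereas you evaluate $p(\lambda_k)$ directly and use sign alternation plus the degree count; both arguments are standard and equivalent in strength.
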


\begin{proof}
Since the weights sum up to $1$, the degree of $p(x)$ is $n-1$.
Assume first that the poles are distinct and the weights are positive. Consider the rational function
\begin{align}
\label{2016-01-13-rat}
f(x) := \sum_{i=1}^n \frac{w_i}{x-\lambda_i} = \frac{p(x)}{\prod_{j=1}^n (x-\lambda_j)}.
\end{align}
It is strictly decreasing on each interval $(\lambda_{i+1}, \lambda_i)$ with $f(\lambda_{i+1}+)=\infty$ and $f(\lambda_{i}-)=-\infty$.  Thus, $f(x)$ vanishes exactly once in each such interval. These vanishing points are zeros of $p(x)$ and they strictly interlace with the poles. 

The general case follows using the fact that zeros of polynomials are continuous functions of its coefficients. 
\end{proof}

\begin{theorem}[The main result]
\label{thm:renormalized_bounds}
For any indices $\ell$ and $r$, $1 \le \ell \le r \le n-1$, we have the inequalities:
\begin{align*} 
%\label{eq:renormalized_upper}
\sum_{j=\ell}^r \lambda_{j+1} + \sum_{j=\ell}^r \frac{w_{j+1}}{L_{r+1}} (\lambda_\ell - \lambda_{j+1})  \le  \sum_{j=\ell}^r \mu_j \le \sum_{j=\ell}^r \lambda_j - \sum_{j=\ell}^r \frac{w_j}{U_{\ell}} (\lambda_j - \lambda_{r+1}),
\end{align*}
provided that $L_{r+1}$ and $U_{\ell}$ are non-zero,  where
$$
U_{\ell} := \sum_{i=\ell}^n w_i \quad \mbox{and} \quad L_{r+1} := \sum_{i=1}^{r+1} w_i.
$$
\end{theorem}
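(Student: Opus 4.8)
The plan is to work directly with the rational function $f(x) = p(x)/\prod_{j=1}^n(x-\lambda_j) = \sum_{i=1}^n w_i/(x-\lambda_i)$, whose zeros on $(\lambda_{i+1},\lambda_i)$ are exactly the $\mu_i$. To prove the upper bound $\sum_{j=\ell}^r \mu_j \le \sum_{j=\ell}^r \lambda_j - \sum_{j=\ell}^r \frac{w_j}{U_\ell}(\lambda_j - \lambda_{r+1})$, I would first fix $j$ with $\ell \le j \le r$ and obtain a per-index bound on $\mu_j$. Since $\mu_j \in (\lambda_{j+1}, \lambda_j)$, I write $f(\mu_j) = 0$ and split the sum into the terms $i = \ell, \dots, j$ (whose poles $\lambda_i$ satisfy $\lambda_i \ge \lambda_j > \mu_j$, so these terms are positive) and the remaining terms. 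The idea is to bound the "small" terms: for $i \le \ell - 1$ and for $i \ge j+1$ the term $w_i/(\mu_j - \lambda_i)$ can be estimated using the crude bounds $\mu_j \ge \lambda_{r+1}$ (for the indices $i \ge j+1 \ge \ell+1$, all at least $\lambda_{r+1}$ in the relevant comparison) and the fact that poles to the left of $\mu_j$ contribute negatively. Rearranging the identity $\sum_i w_i/(\mu_j - \lambda_i) = 0$ and using convexity/monotonicity of $t \mapsto 1/(t - \lambda_i)$ should yield an inequality of the form $(\lambda_j - \mu_j)\sum_{i=\ell}^{j} w_i \ge (\text{something}) $ that, after summing over $j = \ell, \dots, r$, telescopes into the claimed bound. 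The lower bound is handled symmetrically by the substitution $x \mapsto -x$, $\lambda_i \mapsto -\lambda_{n+1-i}$, which swaps the roles of $L$ and $U$ and of $\ell$ and $r$.

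More concretely, for the upper bound I expect the key per-step estimate to come from comparing $\mu_j$ with $\mu_{j-1}, \dots, \mu_\ell$ simultaneously rather than one at a time — i.e., one should sum the identities $f(\mu_\ell) = \dots = f(\mu_r) = 0$ after multiplying by suitable nonnegative factors, or equivalently track the quantity $\sum_{j=\ell}^r (\lambda_j - \mu_j)$ and show it is at least $\sum_{j=\ell}^r \frac{w_j}{U_\ell}(\lambda_j - \lambda_{r+1})$. A clean route: from $\sum_{i=1}^n \frac{w_i}{\mu_j - \lambda_i} = 0$, isolate $\sum_{i=\ell}^n \frac{w_i}{\mu_j-\lambda_i} = -\sum_{i=1}^{\ell-1}\frac{w_i}{\mu_j-\lambda_i} \ge 0$ (each term on the right has $\lambda_i > \mu_j$, so is positive after the sign flip — wait, $\mu_j < \lambda_{\ell-1}$? not necessarily, so one must be careful here). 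Because $\mu_j \le \lambda_\ell$ for $j \ge \ell$, actually $\mu_j < \lambda_i$ for all $i \le \ell$ when $j \ge \ell$ except possibly $i=\ell$; the safe statement is $\mu_j \le \lambda_j \le \lambda_\ell$, and the indices $i = 1,\dots,\ell-1$ give $\lambda_i \ge \lambda_{\ell-1} \ge \lambda_\ell \ge \mu_j$, so those terms $w_i/(\mu_j-\lambda_i) \le 0$. Hence $\sum_{i=\ell}^n \frac{w_i}{\mu_j-\lambda_i} \ge 0$. Among these tail terms, split off $i = \ell, \dots, j$ (positive, with $\lambda_i \ge \lambda_j$) and $i = j+1, \dots, n$ (negative, with $\lambda_i \le \lambda_{j+1} \le \lambda_{r+1}$ once $j \ge \ell$... need $\lambda_{j+1} \ge \lambda_{r+1}$, true). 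The positive part is at most $\frac{1}{\mu_j - \lambda_j}\big|_{\text{reversed sign}}$ type bound: $\sum_{i=\ell}^j \frac{w_i}{\lambda_i - \mu_j} \le \frac{1}{\lambda_j - \mu_j}\sum_{i=\ell}^j w_i$ is the wrong direction, so instead use $\frac{w_i}{\lambda_i - \mu_j} \le \frac{w_i}{\lambda_j - \mu_j}$ only for $i \ge j$... This is where the real work is: choosing the right grouping so the crude bounds $\lambda_{r+1}$ and the weights $U_\ell = \sum_{i=\ell}^n w_i$ emerge. I anticipate the correct move is: from $\sum_{i=\ell}^n \frac{w_i}{\mu_j - \lambda_i} \ge 0$, bound each term with $\lambda_i \ge \lambda_{r+1}$ in the denominator's favorable direction to get $\sum_{i=\ell}^n \frac{w_i}{\mu_j - \lambda_{r+1}}$ is not valid since signs differ; rather one bounds $\frac{w_i}{\mu_j-\lambda_i} \le \frac{w_i(\lambda_{r+1}-\lambda_i)}{(\mu_j-\lambda_i)(\mu_j - \lambda_{r+1})} + \frac{w_i}{\mu_j - \lambda_{r+1}}$, an exact identity, and the first piece has a definite sign.

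The main obstacle, as the above makes clear, is precisely the bookkeeping of signs and the choice of how to group the partial-fraction terms so that the aggregate (summed over $j$) telescopes cleanly into the stated closed form involving $U_\ell$, $L_{r+1}$, and the boundary poles $\lambda_\ell, \lambda_{r+1}$ — rather than into a weaker index-by-index bound. I would therefore spend most of the effort setting up a single master inequality valid for the whole block $j = \ell, \dots, r$ at once. Once that inequality is in hand, the upper bound follows by rearrangement, and the lower bound follows by applying the upper bound to the reflected data $(-\lambda_n \ge \dots \ge -\lambda_1)$ with weights $(w_n, \dots, w_1)$, under which $\mu_j \mapsto -\mu_{n-j}$, $U_\ell \leftrightarrow L_{n+1-\ell}$, and the block $[\ell, r] \mapsto [n-r, n-\ell]$; matching indices recovers the claimed lower bound. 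Finally, I would note that the hypotheses $L_{r+1} \ne 0$ and $U_\ell \ne 0$ are exactly what is needed for the denominators in the statement to make sense, and that the distinct-poles case suffices by the same continuity argument used in the preceding lemma.
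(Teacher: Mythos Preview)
Your proposal identifies a natural line of attack --- manipulate the identities $f(\mu_j)=0$ directly and hope that a clever grouping of terms yields the claimed bound --- but it does not actually carry it out. You yourself flag that the obvious per-index estimates (e.g.\ $\sum_{i=\ell}^{j} \frac{w_i}{\lambda_i-\mu_j} \le \frac{1}{\lambda_j-\mu_j}\sum_{i=\ell}^{j} w_i$) go ``the wrong direction,'' and the one exact identity you write down, $\frac{w_i}{\mu_j-\lambda_i} = \frac{w_i(\lambda_{r+1}-\lambda_i)}{(\mu_j-\lambda_i)(\mu_j-\lambda_{r+1})} + \frac{w_i}{\mu_j-\lambda_{r+1}}$, is not followed by any argument showing how the pieces assemble. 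So the proposal is a description of where the difficulty lies rather than a proof: the ``master inequality valid for the whole block'' on which everything hinges is never stated, let alone established. Without it there is no proof, and it is not at all clear that such an inequality can be extracted from the partial-fraction identities by sign-and-convexity bookkeeping alone.

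The paper's argument is structurally quite different and avoids this difficulty entirely. It first treats the equal-weight case $w_i=1/n$ by a monotonicity-and-limiting argument: one shows $\partial \mu_k/\partial \lambda_j > 0$, then pushes the irrelevant poles to $\pm\infty$ or collapses them onto $\lambda_{r+1}$ (resp.\ $\lambda_\ell$), reducing to a configuration where $\sum_{j=\ell}^r \mu_j^{*}$ can be read off exactly from Vieta's formula. The general-weight case is then obtained by approximating rational weights $w_i=k_i/N$, replacing each pole $\lambda_i$ by a cluster of $k_i$ equal-weight poles, applying the equal-weight bound to this $N$-pole system, and letting the clusters collapse. This pole-splitting trick is the key idea you are missing; it converts the weighted problem into an unweighted one where a clean extremal argument is available, and bypasses the sign-bookkeeping you were attempting.
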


The proof of Theorem~\ref{thm:renormalized_bounds} is given in the next two subsections.
The following immediate relaxation maybe simpler to use in practice.

\begin{corollary}
\label{2026-01-14-cor}
For any indices $\ell$ and $r$, $1 \le \ell \le r \le n-1$, we have the inequalities:
\begin{equation} 
\label{eq:cor:renormalized_upper}
\sum_{j=\ell}^r \lambda_{j+1} + \sum_{j=\ell}^r w_{j+1} (\lambda_\ell - \lambda_{j+1})  \le  \sum_{j=\ell}^r \mu_j \le \sum_{j=\ell}^r \lambda_j - \sum_{j=\ell}^r w_j (\lambda_j - \lambda_{r+1}),
\end{equation}
provided that $L_{r+1}$ and $U_{\ell}$ are non-zero.
\end{corollary}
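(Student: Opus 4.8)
The plan is to obtain the corollary as a one-line consequence of Theorem~\ref{thm:renormalized_bounds} by discarding the normalizing denominators. First I would note that, since $w_1, \dots, w_n \ge 0$ and $\sum_{i=1}^n w_i = 1$, the partial sums satisfy $0 < L_{r+1} \le 1$ and $0 < U_{\ell} \le 1$ (both are non-zero by hypothesis), so that $1/L_{r+1} \ge 1$ and $1/U_{\ell} \ge 1$.

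Next I would verify that the quantities multiplied by these factors are non-negative, and then track the direction of the resulting estimates. For the upper bound in Theorem~\ref{thm:renormalized_bounds}, whenever $\ell \le j \le r$ we have $j < r+1$, so $\lambda_j \ge \lambda_{r+1}$ and hence $w_j(\lambda_j - \lambda_{r+1}) \ge 0$; multiplying by $1/U_\ell \ge 1$ gives $\frac{w_j}{U_{\ell}}(\lambda_j - \lambda_{r+1}) \ge w_j(\lambda_j - \lambda_{r+1})$. Summing over $j$ from $\ell$ to $r$ and subtracting from $\sum_{j=\ell}^r \lambda_j$ reverses the inequality, and combined with Theorem~\ref{thm:renormalized_bounds} this yields $\sum_{j=\ell}^r \mu_j \le \sum_{j=\ell}^r \lambda_j - \sum_{j=\ell}^r w_j(\lambda_j - \lambda_{r+1})$. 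For the lower bound, whenever $\ell \le j \le r$ we have $\ell < j+1$, so $\lambda_\ell \ge \lambda_{j+1}$ and $w_{j+1}(\lambda_\ell - \lambda_{j+1}) \ge 0$; then $\frac{w_{j+1}}{L_{r+1}}(\lambda_\ell - \lambda_{j+1}) \ge w_{j+1}(\lambda_\ell - \lambda_{j+1})$, and adding these terms to $\sum_{j=\ell}^r \lambda_{j+1}$ preserves the inequality, giving the claimed lower bound.

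There is no substantive obstacle here; the only point requiring a moment's attention is the direction of the estimates — enlarging a subtracted non-negative term keeps an upper bound valid, and enlarging an added non-negative term keeps a lower bound valid — so both replacements are in the safe direction and \eqref{eq:cor:renormalized_upper} follows.
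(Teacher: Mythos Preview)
Your argument is correct and is exactly the paper's approach: use $U_\ell, L_{r+1} \le 1$ together with $\lambda_j - \lambda_{r+1} \ge 0$ and $\lambda_\ell - \lambda_{j+1} \ge 0$ to relax the bounds of Theorem~\ref{thm:renormalized_bounds}. One minor wording slip: in your closing paragraph the heuristics ``enlarging a subtracted non-negative term keeps an upper bound valid'' and ``enlarging an added non-negative term keeps a lower bound valid'' are stated backwards --- in passing from the theorem to the corollary you are \emph{shrinking} the correction terms (replacing $w_j/U_\ell$ by $w_j$ and $w_{j+1}/L_{r+1}$ by $w_{j+1}$), which weakens the bounds in the safe direction --- but your second paragraph carries out the computation correctly, so the proof stands.
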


\begin{proof}
Since both $U_{\ell}$ and $L_{r+1}$ are at most $1$, we have
\[
\frac{w_j}{U_{\ell}} \ge w_j \quad \mbox{and} \quad \frac{w_{j+1}}{L_{r+1}} \ge w_{j+1},
\]
while the differences $\lambda_\ell - \lambda_{j+1}$ and $\lambda_j - \lambda_{r+1}$ are non-negative for $j=\ell, \ldots, r$.
\end{proof}

\subsection{Proof of Theorem~\ref{thm:renormalized_bounds}: the equal weight case}
We first establish Theorem~\ref{thm:renormalized_bounds} in the specific case when all the weights are equal. 
That is, throughout this subsection, we assume
$$
w_i = \frac{1}{n}, \mbox{ for all $i=1,\ldots, n$}.
$$
%In this case, inequalities~\eqref{eq:weighted_upper} can be re-written as
%\begin{align}
%\label{2026-01-10-eqweight} 
%\frac{n-1}{n}  \sum_{j=\ell}^r \lambda_{j+1} + \frac{r-\ell+1}{n} \lambda_\ell  \le \sum_{j=\ell}^r \mu_j \le \frac{n-1}{n} \sum_{j=\ell}^r \lambda_j  + \frac{r-\ell+1}{n}  \lambda_{r+1}.
%\end{align}
Then $\mu_1, \dots, \mu_{n-1}$ are the critical points of the polynomial 
$$
p(x) := \prod_{j=1}^n (x-\lambda_j).
$$
%and by the Vieta's formulas, we have
%\begin{align}
%\label{lem:sum_roots}
%\sum_{k=1}^{n-1} \mu_k = \frac{n-1}{n} \sum_{j=1}^n \lambda_j.
%\end{align}

%In the next lemma, we, in fact, establish tighter bounds than \eqref{2026-01-10-eqweight}. The fact that the bounds \eqref{eq:equal_upper} are tighter than \eqref{2026-01-10-eqweight}, is not immediately obvious but follows after direct verification, using the non-increasing order of the poles.

\begin{proposition}[Equal weight bounds]
\label{lem:equal_weights}
Assume $w_i = 1/n$ for all $i$. 
Then, for any indices $\ell$ and $r$, $1 \le \ell \le r \le n-1$, we have the inequalities:
\begin{align*} 
%\label{eq:equal_upper}
\sum_{j=\ell}^r \lambda_{j+1} + \frac{1}{r+1} \sum_{j=\ell}^r  (\lambda_\ell - \lambda_{j+1})  \le  \sum_{j=\ell}^r \mu_j \le \sum_{j=\ell}^r \lambda_j - \frac{1}{n-\ell+1}  \sum_{j=\ell}^r (\lambda_j - \lambda_{r+1}).
%\label{eq:equal_lower}
%\quad \sum_{j=\ell}^r \mu_j &\ge \frac{r}{r+1} \sum_{j=\ell}^r \lambda_{j+1} + \frac{r-\ell+1}{r+1} \lambda_{\ell}.
\end{align*}
\end{proposition}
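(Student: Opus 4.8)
The plan is to deduce the lower bound from the upper bound via the reflection $x\mapsto-x$, and to prove the upper bound by comparing logarithmic derivatives against one carefully chosen auxiliary polynomial. For the reduction, note that $x\mapsto-x$ turns $p(x)=\prod_{j=1}^n(x-\lambda_j)$ into the monic polynomial $q(x):=(-1)^n p(-x)$, whose poles in non‑increasing order are $-\lambda_n\ge\cdots\ge-\lambda_1$ and whose critical points are $-\mu_{n-1}\ge\cdots\ge-\mu_1$. A direct substitution shows that the asserted upper bound for $q$ on the window $[\,n-r,\ n-\ell\,]$ translates back into the asserted lower bound for $p$ on $[\ell,r]$. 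So it suffices to prove
\[
\sum_{j=\ell}^r\mu_j\ \le\ \sum_{j=\ell}^r\lambda_j-\frac{1}{n-\ell+1}\sum_{j=\ell}^r(\lambda_j-\lambda_{r+1}),\qquad 1\le\ell\le r\le n-1 .
\]
By continuity of the roots in the coefficients I may assume the $\lambda_i$ are pairwise distinct, so \eqref{2026-01-09-strictInterl} is strict and $\mu_b$ is the unique zero of $p'$ in $I_b:=(\lambda_{b+1},\lambda_b)$.

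The comparison polynomial is $\widehat{p}(x):=(x-\lambda_{r+1})^{n-r}\prod_{j=\ell}^{r}(x-\lambda_j)$, a monic polynomial of degree $n-\ell+1$ in which $\lambda_{r+1}$ has multiplicity $n-r$ and all other roots are simple; consequently $\lambda_{r+1}$ is a zero of $\widehat{p}'$ of multiplicity $n-r-1$, and $\widehat{p}'$ has exactly one further zero $\widehat\mu_{\,b-\ell+1}$ in each $I_b$, $b=\ell,\dots,r$. The key estimate is that for every such $b$ and every $x\in I_b$,
\[
\frac{p'(x)}{p(x)}-\frac{\widehat{p}'(x)}{\widehat{p}(x)}=\sum_{j=1}^{\ell-1}\frac{1}{x-\lambda_j}+\Bigl(\sum_{j=r+2}^{n}\frac{1}{x-\lambda_j}-\frac{n-r-1}{x-\lambda_{r+1}}\Bigr)\le 0 ,
\]
since $x<\lambda_\ell\le\lambda_j$ for $j\le\ell-1$ makes the first sum $\le 0$, while $0<x-\lambda_{r+1}\le x-\lambda_j$ for $j\ge r+2$ makes each of the $n-r-1$ remaining fractions $\le 1/(x-\lambda_{r+1})$. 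Both $p'/p$ and $\widehat{p}'/\widehat{p}$ are sums of terms $m/(x-a)$ with $m>0$, hence strictly decrease from $+\infty$ to $-\infty$ on $I_b$; together with the displayed inequality this forces $\mu_b\le\widehat\mu_{\,b-\ell+1}$. Summing over $b=\ell,\dots,r$ gives $\sum_{j=\ell}^r\mu_j\le\sum_{k=1}^{r-\ell+1}\widehat\mu_k$.

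It then remains to evaluate $\sum_{k=1}^{r-\ell+1}\widehat\mu_k$, the sum of the critical points of $\widehat{p}$ other than $\lambda_{r+1}$. Since the critical points of a monic polynomial of degree $d$ sum to $\tfrac{d-1}{d}$ times the sum of its roots, the $n-\ell$ critical points of $\widehat{p}$ sum to $\tfrac{n-\ell}{n-\ell+1}\bigl(\sum_{j=\ell}^r\lambda_j+(n-r)\lambda_{r+1}\bigr)$; removing the $n-r-1$ critical points equal to $\lambda_{r+1}$ and simplifying (the coefficient of $\lambda_{r+1}$ collapses to $\tfrac{r-\ell+1}{n-\ell+1}$) leaves $\tfrac{n-\ell}{n-\ell+1}\sum_{j=\ell}^r\lambda_j+\tfrac{r-\ell+1}{n-\ell+1}\lambda_{r+1}$, which is exactly $\sum_{j=\ell}^r\lambda_j-\tfrac{1}{n-\ell+1}\sum_{j=\ell}^r(\lambda_j-\lambda_{r+1})$. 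This closes the argument (with equality in the comparison when $\ell=1$ and $r=n-1$, in which case $\widehat{p}=p$), and the general, non‑distinct case follows by continuity.

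The step I expect to be the main obstacle is guessing $\widehat{p}$: one has to see that replacing the poles of index $<\ell$ by "poles at infinity" and collapsing the poles of index $>r+1$ onto $\lambda_{r+1}$ is exactly the modification that makes $p'/p-\widehat{p}'/\widehat{p}\le 0$ hold simultaneously on \emph{every} interval $I_\ell,\dots,I_r$ while still leaving the critical‑point sum equal to the target; once $\widehat{p}$ is in hand, everything else is bookkeeping.
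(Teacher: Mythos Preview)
Your proof is correct. It shares the paper's core strategy: compare against the auxiliary polynomial $\widehat p(x)=(x-\lambda_{r+1})^{n-r}\prod_{j=\ell}^r(x-\lambda_j)$ and read off the sum of its ``extra'' critical points by Vieta. The difference is in how the comparison $\mu_b\le \widehat\mu_{\,b-\ell+1}$ is established. The paper argues variationally: it shows $\partial\mu_k/\partial\lambda_j>0$ by implicit differentiation, then sends $\lambda_1,\dots,\lambda_{\ell-1}\to+\infty$ and slides $\lambda_{r+2},\dots,\lambda_n$ up to $\lambda_{r+1}$, tracking that each $\mu_b$ can only increase to its limiting value $\mu_b^\ast=\widehat\mu_{\,b-\ell+1}$. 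You instead give a static, pointwise estimate: the inequality $p'/p\le\widehat p'/\widehat p$ on each $I_b$ together with strict monotonicity of both logarithmic derivatives pins down $\mu_b\le\widehat\mu_{\,b-\ell+1}$ directly. Your route is a bit tighter, since it avoids the poles-to-infinity limit; the paper's monotonicity formula, on the other hand, is reused later (in Section~\ref{2026-01-10Subsec}) to build an explicit doubly stochastic matrix. You also obtain the lower bound by the reflection $x\mapsto -x$ applied to the window $[n-r,n-\ell]$, whereas the paper repeats the argument symmetrically (collapsing $\lambda_1,\dots,\lambda_{\ell-1}$ onto $\lambda_\ell$ and sending $\lambda_{r+2},\dots,\lambda_n\to-\infty$); both are equally valid.
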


\begin{proof}
Suppose first that the poles are distinct. This implies that 
$$
 \lambda_1 > \mu_1 > \lambda_2 > \mu_2 > \dots > \mu_{n-1} > \lambda_n
$$
and thus each $\mu_k$ depends smoothly on each 
$\lambda_j$. Differentiating the identity $f(\mu_k) = 0$ with respect to $\lambda_j$ yields:
\[
0=\frac{\partial}{\partial \lambda_j} f(\mu_k)
= f'(\mu_k)\,\frac{\partial \mu_k}{\partial \lambda_j}+\frac{\partial}{\partial \lambda_j}\Big(\frac1{\mu_k - \lambda_j}\Big)
= f'(\mu_k)\,\frac{\partial \mu_k}{\partial \lambda_j}+\frac1{(\mu_k - \lambda_j)^2}.
\]
Hence
\begin{align*}
%\label{2026-01-05-der}
\frac{\partial \mu_k}{\partial \lambda_j}
=\frac{(\mu_k - \lambda_j)^{-2}}{\sum_{i=1}^n (\mu_k - \lambda_i)^{-2}}>0.
\end{align*}
Thus every \(\mu_k\) is (strictly) increasing in every \(\lambda_j\).

We first establish the upper bound in Proposition~\ref{lem:equal_weights}. Fix the poles $\lambda_\ell, \dots, \lambda_r$, and $\lambda_{r+1}$. Let the poles $\lambda_1, \dots, \lambda_{\ell-1}$ increase to $\infty$. This causes 
the zeros $\mu_1$, $\dots$,$\mu_{\ell-1}$ to also increase to infinity, thus eliminating them from the problem. 
Increase the poles $\lambda_{r+2}, \dots, \lambda_n$ until they coincide with $\lambda_{r+1}$. All this causes the remaining zeros $\mu_\ell, \dots, \mu_{r}$ to increase to values $\mu^*_\ell, \dots, \mu^*_{r}$, while $\mu_{r+1}, \dots, \mu_{n-1}$ increase until they coincide with $\lambda_{r+1}$. In this way, 
$\mu_\ell^*, \dots, \mu_{r}^*, \underbrace{\lambda_{r+1}, \dots, \lambda_{r+1}}_{n-r-1 \text{ times}}$ are the critical points of 
\[
\tilde{p}(x) := (x - \lambda_{r+1})^{n-r} \prod_{j=\ell}^r (x - \lambda_j) =: (x - \lambda_{r+1})^{n-r} q(x).
\]
Differentiating, gives
\begin{align*}
\tilde p'(x)&=(x-\lambda_{r+1})^{n-r-1}\,\big((n-r)\,q(x)+(x-\lambda_{r+1})\,q'(x)\big) \\
&= (x-\lambda_{r+1})^{n-r-1}\,\Big((n-r)\,\Big(x^{r-\ell+1} - \Big(\sum_{j=\ell}^r \lambda_j \Big) x^{r-\ell}+ \cdots \Big) \\
&+(x-\lambda_{r+1})\,\Big((r-\ell+1)x^{r-\ell} - (r-\ell)\Big(\sum_{j=\ell}^r \lambda_j \Big) x^{r-\ell-1}+ \cdots \Big) \Big) \\
&= (x-\lambda_{r+1})^{n-r-1}\,\Big( (n - \ell+1)x^{r-\ell+1} - s x^{r-\ell} +  \cdots  \Big) \\
&= (x-\lambda_{r+1})^{n-r-1}\, (n - \ell+1) \prod_{j=\ell}^r (x - \mu_j^*),
\end{align*}
where for brevity we let
$$
s:= (n-\ell) \sum_{j=\ell}^r \lambda_j + (r-\ell+1) \lambda_{r+1}.
$$
Thus, by the Vieta's formulas, we obtain the upper bound in Proposition~\ref{lem:equal_weights}
\begin{align*}
\sum_{j=\ell}^r \mu_j \le \sum_{j=\ell}^r \mu^*_j &=  \frac{n-\ell}{n - \ell+1} \sum_{j=\ell}^r \lambda_j + \frac{r-\ell+1}{n - \ell+1} \lambda_{r+1} \\
&= \sum_{j=\ell}^r \lambda_j - \sum_{j=\ell}^r \frac{1}{n-\ell+1} (\lambda_j - \lambda_{r+1}).
\end{align*}

The proof of the lower bound is similar. Fix the poles $\lambda_\ell, \dots, \lambda_r$, and $\lambda_{r+1}$. Let the poles $\lambda_{r+2}, \ldots, \lambda_n$ decrease to $-\infty$.
This causes the zeros $\mu_{r+1}, \dots, \mu_{n-1}$  also to decrease to $-\infty$, thus eliminating them from the problem. Decrease the  poles $\lambda_1, \dots, \lambda_{\ell-1}$ until they coincide with 
$\lambda_\ell$. All this causes the zeros $\mu_{1}, \dots, \mu_{\ell-1}$ to decrease until they coincide with $\lambda_{\ell}$, while $\mu_\ell, \dots, \mu_{r}$ decrease to values 
$\mu^{**}_\ell, \dots, \mu^{**}_{r}$. In this way, 
$\underbrace{\lambda_{\ell}, \dots, \lambda_{\ell}}_{\ell - 1 \text{ times}}, \mu_\ell^*, \dots, \mu_{r}^*$ are the critical points of the polynomial
\[
\tilde{p}(x) := (x - \lambda_{\ell})^{\ell} \prod_{j=\ell}^r (x - \lambda_{j+1}).
\]
The rest of the proof is similar to that of the upper bound.

The case when the poles are not distinct can be handled as a limiting case, using the fact that each $\mu_i$ is a continuous function of the poles. 
\end{proof}

The bounds in Proposition~\ref{lem:equal_weights} can be re-written as 
\begin{align} 
\label{2026-01-10-eq:equal_upper}
\frac{r}{r+1} \sum_{j=\ell}^r \lambda_{j+1} + \frac{r-\ell+1}{r+1} \lambda_{\ell}  \le  \sum_{j=\ell}^r \mu_j \le  \frac{n - \ell}{n-\ell+1} \sum_{j=\ell}^r \lambda_j + \frac{r-\ell+1}{n-\ell+1} \lambda_{r+1}.
\end{align}
That will be useful in the next subsection. 

\subsection{Proof of Theorem~\ref{thm:renormalized_bounds}: the general weight case} Assume first that the poles are distinct and that  the weights are rational. We will dispense with these conditions at the end. Let $w_i = k_i / N$, where $k_i \in \mathbb{Z}^+$ are positive integers and $N = \sum_{i=1}^n k_i$.
    The rational function \eqref{2016-01-13-rat} can be written as:
    \begin{equation} \label{eq:rational_integers}
      f(x) =  \sum_{i=1}^n \frac{k_i}{x-\lambda_i}.
    \end{equation}
The idea is to replace each pole $\lambda_i$ with a cluster of $k_i$ distinct poles that are close to $\lambda_i$. That is,  for each $i \in \{1, \dots, n\}$, let $\Lambda_i(\epsilon) := \{ \lambda_{i,1}(\epsilon), \dots, \lambda_{i,k_i}(\epsilon) \}$ be a set of $k_i$ distinct real numbers, ordered non-increasingly, and such that $\lim_{\epsilon \to 0} \lambda_{i,s}(\epsilon) = \lambda_i$ for all $s$.
    
    Consider the perturbed function with $N$ distinct poles $\bigcup_{i=1}^n \Lambda_i(\epsilon)$, each assigned an equal weight of $1/N$:
    \begin{equation} \label{eq:perturbed_system}
      \tilde{f}(x) := \frac{1}{N} \sum_{i=1}^n \sum_{s=1}^{k_i} \frac{1}{x-\lambda_{i,s}(\epsilon)}.
    \end{equation}
    Since all $N$ poles are distinct, the zeros of $\tilde{f}(x)$, strictly interlace the $N$ poles. Moreover,  being zeros of a polynomial equation, they are continuous with respect to its coefficients (the poles). For $\epsilon$ close to zero, denote the zeros of \eqref{eq:perturbed_system} by 
\begin{align*}
\lambda_{i,1}(\epsilon) > \tilde{\mu}_{i,1}(\epsilon) >  \ldots 
> \tilde{\mu}_{i,k_i-1}(\epsilon) > \lambda_{i,k_i}(\epsilon) > \tilde{\mu}_i(\epsilon), \mbox{ for } i = 1,\ldots, n-1
\end{align*}
and 
\begin{align*}
\tilde{\mu}_{n-1}(\epsilon) > \lambda_{n,1}(\epsilon) > \tilde{\mu}_{n,1}(\epsilon) >  \ldots 
> \tilde{\mu}_{n,k_n-1}(\epsilon) > \lambda_{n,k_n}(\epsilon).
\end{align*}
As $\epsilon$ converges to $0$, $\tilde{\mu}_{i,1}(\epsilon),  \ldots,  \tilde{\mu}_{i,k_i-1}(\epsilon)$ converge to $\lambda_i$, while $\tilde{\mu}_i(\epsilon)$ converges to $\mu_i$.  By the equal weight case, see \eqref{2026-01-10-eq:equal_upper}, we have the following upper bound
\begin{align*}
\sum_{j = \ell}^r \Big(\tilde{\mu}_j(\epsilon) + \sum_{s=1}^{k_j-1}  \tilde{\mu}_{j,s}(\epsilon) \Big)
\le  \frac{N-L}{N - L+1} \sum_{j=\ell}^r \sum_{s=1}^{k_j}  {\lambda}_{j,s}(\epsilon)+ \frac{R-L+1}{N - L+1} \lambda_{r+1,1},
\end{align*}
where
$$
N := \sum_{i=1}^n k_i, \quad L:= \sum_{i=1}^{\ell-1} k_i+1, \quad \mbox{and } R:= \sum_{i=1}^{r} k_i.
$$
In the limit, as $\epsilon$ approaches $0$, we obtain
\begin{align*}
\sum_{j = \ell}^r \Big({\mu}_j + (k_j-1)  \lambda_{j} \Big)
\le   \frac{N-L}{N - L+1} \sum_{j=\ell}^r k_j  {\lambda}_{j} + \frac{R-L+1}{N - L+1} \lambda_{r+1}.
\end{align*}
Thus, 
\begin{align*}
\sum_{j = \ell}^r {\mu}_j 
&\le   \frac{N-L}{N - L+1} \sum_{j=\ell}^r k_j  {\lambda}_{j} - \sum_{j = \ell}^r (k_j-1)  \lambda_{j} + \frac{R-L+1}{N - L+1} \lambda_{r+1} \\
&=  \sum_{j = \ell}^r  \lambda_{j} + 
\Big( \frac{N-L}{N - L+1} -1 \Big) \sum_{j=\ell}^r k_j  {\lambda}_{j} + \frac{R-L+1}{N - L+1} \lambda_{r+1} \\
&= \sum_{j = \ell}^r  \lambda_{j} + \frac{1}{N-L+1}
\Big( \sum_{j=\ell}^r k_j  {\lambda}_{r+1}  - \sum_{j=\ell}^r k_j  {\lambda}_{j} \Big) \\ 
&= \sum_{j = \ell}^r  \lambda_{j} + \frac{1}{\sum_{j=\ell}^n k_j }
 \sum_{j=\ell}^r k_j  ({\lambda}_{r+1}  -  {\lambda}_{j}) \\
&= \sum_{j = \ell}^r  \lambda_{j} + \frac{1}{U_{\ell} }
 \sum_{j=\ell}^r w_j  ({\lambda}_{r+1}  -  {\lambda}_{j}), 
\end{align*}
where in the last equality we divided the numerator and the denominator by $N$.

The derivation of the lower bound is similar.

Finally,  since the rationals are dense in $\mathbb{R}$ and the roots of a polynomial are continuous functions of the coefficients, the inequalities extend to all real non-negative weights and poles with multiplicities.  This completes the proof of Theorem~\ref{thm:renormalized_bounds}.

%\begin{corollary}[Relaxed Bounds for Equal Weights]
%\label{cor:relaxed_equal}
%The sharp bounds in Lemma \ref{lem:equal_weights} imply the following looser but simpler inequalities, which correspond to the equal-weight case ($w_i = 1/n$) of Theorem \ref{thm:main_weighted}:
%
%\begin{equation}
%    \sum_{j=l}^r \mu_j \le \sum_{j=l}^r \lambda_j - \frac{1}{n} \sum_{j=l}^r (\lambda_j - \lambda_{r+1}).
%\end{equation}
%
%\begin{equation}
%    \sum_{j=l}^r \mu_j \ge \sum_{j=l}^r \lambda_{j+1} + \frac{1}{n} \sum_{j=l}^r (\lambda_l - \lambda_{j+1}).
%\end{equation}
%\end{corollary}
%
%\begin{proof}
%This follows immediately from Lemma \ref{lem:equal_weights} by elementary comparison of the coefficients.
%\end{proof}

\subsection{Majorization relationships}. 
\label{2026-01-10Subsec}
The bounds in Proposition~\ref{lem:equal_weights} can be relaxed by replacing both $1/(r+1)$ and $1/(n-\ell+1)$ by $1/n$, obtaining the following corollary. What is lost in precision allows us to formulate the inequalities as majorization relationships. 

\begin{corollary}
\label{cor:relaxed_equal}
Let $\lambda_1 \ge \dots \ge \lambda_n$ be real numbers and let $\mu_1 \ge \dots \ge \mu_{n-1}$ be the critical points of
\begin{align*}
p(x) := \prod_{j=1}^n (x-\lambda_j).
\end{align*}
Then, for any $r$, $1 \le r \le n-1$, we have
\begin{align}
\label{2026-01-10-ineq}
(n-1)\sum_{j=1}^r \lambda_{j+1} + r \lambda_1 \le  n\sum_{j=1}^r \mu_j 
\le (n-1) \sum_{j=1}^r \lambda_j + r \lambda_{r+1}.
\end{align}
\end{corollary}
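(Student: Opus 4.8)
The plan is to deduce this directly from Proposition~\ref{lem:equal_weights}, or equivalently from the rewritten form~\eqref{2026-01-10-eq:equal_upper}, by specializing to $\ell = 1$ and then weakening the coefficients. Setting $\ell = 1$ in~\eqref{2026-01-10-eq:equal_upper} gives
\[
\frac{r}{r+1} \sum_{j=1}^r \lambda_{j+1} + \frac{r}{r+1} \lambda_1 \;\le\; \sum_{j=1}^r \mu_j \;\le\; \frac{n-1}{n} \sum_{j=1}^r \lambda_j + \frac{r}{n} \lambda_{r+1},
\]
since with $\ell=1$ we have $r-\ell+1 = r$ and $n-\ell+1 = n$. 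Multiplying the upper bound through by $n$ immediately yields the right-hand inequality in~\eqref{2026-01-10-ineq}, with no loss at all. So the upper bound is essentially free once Proposition~\ref{lem:equal_weights} is in hand.

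For the lower bound, the coefficient $\tfrac{r}{r+1}$ must be replaced by $\tfrac{n-1}{n}$-weighted and $\tfrac1n$-weighted pieces. Starting from the original form of the lower bound in Proposition~\ref{lem:equal_weights} with $\ell=1$, namely
\[
\sum_{j=1}^r \lambda_{j+1} + \frac{1}{r+1} \sum_{j=1}^r (\lambda_1 - \lambda_{j+1}) \;\le\; \sum_{j=1}^r \mu_j,
\]
one replaces $\tfrac1{r+1}$ by $\tfrac1n$; since $r \le n-1$ we have $\tfrac1{r+1} \ge \tfrac1n$, and each difference $\lambda_1 - \lambda_{j+1}$ is non-negative, so the left side only decreases. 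This gives
\[
\sum_{j=1}^r \lambda_{j+1} + \frac{1}{n} \sum_{j=1}^r (\lambda_1 - \lambda_{j+1}) \le \sum_{j=1}^r \mu_j,
\]
and multiplying by $n$ and rearranging the left side as $(n-1)\sum_{j=1}^r \lambda_{j+1} + r\lambda_1$ produces exactly the left-hand inequality in~\eqref{2026-01-10-ineq}.

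There is essentially no obstacle here: the corollary is a one-line weakening of an already-proved proposition, and the only things to check are the elementary facts that $\tfrac1{r+1} \ge \tfrac1n$ when $r \le n-1$ and that the relevant $\lambda$-differences are non-negative because the $\lambda_j$ are ordered non-increasingly. The mild care required is purely bookkeeping: confirming that the specialization $\ell=1$ turns the denominators $r-\ell+1$ and $n-\ell+1$ into $r$ and $n$ respectively, and that multiplying through by $n$ distributes correctly over the two summation terms on each side. I would present it as a short paragraph rather than a displayed multi-step computation.
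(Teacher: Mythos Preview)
Your proposal is correct and matches the paper's approach exactly: the paper states just before the corollary that the bounds in Proposition~\ref{lem:equal_weights} are relaxed by replacing $1/(r+1)$ and $1/(n-\ell+1)$ by $1/n$, and with $\ell=1$ this is precisely your argument. You even observe, correctly, that the upper bound requires no relaxation at all since $1/(n-\ell+1)=1/n$ when $\ell=1$.
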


Let $N:= n(n-1)$. Define the vectors $\lambda^*, \mu^*, \nu^* \in \mathbb{R}^N$ as follows
\begin{align*}
\lambda^* &:= (\underbrace{\lambda_1, \ldots, \lambda_1}_{\text{$n-1$ times}}, \underbrace{\lambda_2, \ldots, \lambda_2}_{\text{$n-1$ times}}, \ldots, \underbrace{\lambda_n, \ldots, \lambda_n}_{\text{$n-1$ times}})^T, \\
\mu^* &:= (\underbrace{\mu_1, \ldots, \mu_1}_{\text{$n$ times}}, \underbrace{\mu_2, \ldots, \mu_2}_{\text{$n$ times}}, \ldots, \underbrace{\mu_{n-1}, \ldots, \mu_{n-1}}_{\text{$n$ times}})^T,
\end{align*}
and letting
\[
\nu_j := \frac{n-1}{n}\lambda_{j+1} + \frac{1}{n}\lambda_1, \quad \mbox{for all } j=1,\ldots, n-1
\]
define
\[
\nu^* := (\underbrace{\nu_1, \dots, \nu_1}_{n \text{ times}}, \underbrace{\nu_2, \dots, \nu_2}_{n \text{ times}}, \dots, \underbrace{\nu_{n-1}, \dots, \nu_{n-1}}_{n \text{ times}})^T.
\]
The proof of the next theorem is immediate from \eqref{2026-01-10-ineq},

\begin{theorem}
\label{thm:majorization_construction}
We have $\nu^* \prec \mu^* \prec \lambda^*$.
\end{theorem}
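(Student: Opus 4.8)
The plan is to derive the two majorizations $\mu^* \prec \lambda^*$ and $\nu^* \prec \mu^*$ directly from the inequalities \eqref{2026-01-10-ineq} of Corollary~\ref{cor:relaxed_equal}, using the definition of majorization: for vectors already written in non-increasing blocks, I only need to check the partial-sum inequalities at the ``break points'' and confirm the total sums agree. Since $\lambda^*$, $\mu^*$, $\nu^*$ are each constant on consecutive blocks and their blocks are arranged in non-increasing order (this uses the interlacing $\lambda_1 \ge \mu_1 \ge \lambda_2 \ge \cdots$ from the standard lemma, together with the fact that $\nu_j$ is a convex combination of $\lambda_1$ and $\lambda_{j+1}$, hence sandwiched appropriately), the vectors already equal their own non-increasing rearrangements, and it suffices to compare partial sums at the ends of blocks of the \emph{coarser} of the two vectors in each comparison.

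For $\mu^* \prec \lambda^*$: both vectors live in $\R^N$ with $N = n(n-1)$, $\lambda^*$ has blocks of size $n-1$ and $\mu^*$ has blocks of size $n$. I would first record the total-sum identity $\sum_{j=1}^{n-1} n\mu_j = (n-1)\sum_{i=1}^n \lambda_i$, which follows from Vieta applied to $p(x)$ and $p'(x)$ (the sum of critical points is $\frac{n-1}{n}$ times the sum of roots), so both vectors sum to $(n-1)\sum_i \lambda_i$. For an arbitrary $k \in \{1,\dots,N\}$ I would bound $\sum_{i=1}^k \mu^*_i$ from above by extending $k$ up to the next multiple of $n$, say $k \le rn$, giving $\sum_{i=1}^k \mu^*_i \le n\sum_{j=1}^r \mu_j$, and then apply the right inequality in \eqref{2026-01-10-ineq}, namely $n\sum_{j=1}^r \mu_j \le (n-1)\sum_{j=1}^r \lambda_j + r\lambda_{r+1}$. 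The remaining task is the purely arithmetic check that $(n-1)\sum_{j=1}^r\lambda_j + r\lambda_{r+1} \le \sum_{i=1}^{k}\lambda^*_i$ holds for the smallest admissible $k$, i.e.\ when $rn$ is rounded down across $\lambda^*$-blocks of width $n-1$; here one writes $rn = (n-1)\cdot(\text{something}) + (\text{remainder})$ and uses that $\lambda^*$'s entries beyond position $(n-1)r$ are at most $\lambda_{r+1}$ while the leftover weight $r\lambda_{r+1}$ is distributed over exactly the right number of slots. A symmetric argument at positions of the form $k = (n-1)r$ handles the reverse direction of the partial-sum comparison if needed; combining gives the majorization.

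For $\nu^* \prec \mu^*$: here both vectors have blocks of size $n$, so the comparison is cleaner — I only need $\sum_{j=1}^r \nu_j \le \sum_{j=1}^r \mu_j$ for each $r=1,\dots,n-1$, plus equality at $r=n-1$. The partial-sum inequality is exactly the left inequality in \eqref{2026-01-10-ineq} rearranged: $n\sum_{j=1}^r \nu_j = (n-1)\sum_{j=1}^r\lambda_{j+1} + r\lambda_1 \le n\sum_{j=1}^r\mu_j$. For the total sum, $\sum_{j=1}^{n-1}\nu_j = \frac{n-1}{n}\sum_{j=1}^{n-1}\lambda_{j+1} + \frac{n-1}{n}\lambda_1 = \frac{n-1}{n}\sum_{i=1}^n\lambda_i = \sum_{j=1}^{n-1}\mu_j$, matching. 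I should also double-check that $\nu^*$ is genuinely in non-increasing order, i.e.\ $\nu_1 \ge \nu_2 \ge \cdots \ge \nu_{n-1}$, which is immediate since $\lambda_{j+1}$ is non-increasing in $j$.

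The main obstacle I anticipate is the bookkeeping in the $\mu^* \prec \lambda^*$ step when $k$ is not a multiple of $n$ and the corresponding cutoff in $\lambda^*$ is not a multiple of $n-1$: one must verify the partial-sum inequality at \emph{every} index, not just block boundaries, and the cleanest way is to observe that for block-constant vectors it suffices to check indices that are block boundaries of \emph{either} vector, then handle each such index by the rounding argument above. Making that reduction precise — and checking that the ``worst case'' index is indeed covered — is the only place requiring care; everything else is a direct substitution of \eqref{2026-01-10-ineq}.
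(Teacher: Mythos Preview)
Your overall strategy matches the paper's, which simply declares the result ``immediate from \eqref{2026-01-10-ineq}''. The $\nu^* \prec \mu^*$ half is fine, since both vectors have blocks of size $n$ and the left inequality in \eqref{2026-01-10-ineq} is exactly the partial-sum comparison at each node.

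For $\mu^* \prec \lambda^*$, however, your rounding argument is genuinely flawed. Both inequalities you propose can go the wrong way. First, $\sum_{i=1}^k \mu^*_i \le n\sum_{j=1}^r \mu_j$ for $k\le rn$ requires the added terms $\mu_r$ to be non-negative; if $\mu_r<0$ the extension \emph{decreases} the sum. Second, a direct computation gives $\sum_{i=1}^{rn}\lambda^*_i = (n-1)\sum_{j=1}^r\lambda_j + r\lambda_{r+1}$ \emph{exactly}, so for any $k<rn$ with $\lambda_{r+1}>0$ your claimed inequality $(n-1)\sum_{j=1}^r\lambda_j + r\lambda_{r+1} \le \sum_{i=1}^{k}\lambda^*_i$ is reversed. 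Checking at block boundaries of ``either vector'' is the right instinct, but \eqref{2026-01-10-ineq} only handles the $\mu^*$-boundaries $p=rn$; at the $\lambda^*$-boundaries $p=s(n-1)$ you get no direct information.

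The missing ingredient is interlacing, via the unimodal trick the paper later uses in Lemma~\ref{lem:nodal_reduction}. Set $\Delta_p := \sum_{i=1}^p \lambda^*_i - \sum_{i=1}^p \mu^*_i$. For $(r-1)n < p \le rn$ one has $\mu^*_p=\mu_r$, while $\lambda^*_p=\lambda_r$ for $p\le r(n-1)$ and $\lambda^*_p=\lambda_{r+1}$ thereafter. Hence $\Delta_p-\Delta_{p-1}$ equals $\lambda_r-\mu_r\ge 0$ on the first stretch and $\lambda_{r+1}-\mu_r\le 0$ on the second, so $\Delta_p$ is unimodal on $[(r-1)n,\,rn]$ and attains its minimum at an endpoint. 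Since the right inequality of \eqref{2026-01-10-ineq} is precisely $\Delta_{rn}\ge 0$, induction on $r$ gives $\Delta_p\ge 0$ for all $p$. In short, \eqref{2026-01-10-ineq} pins down the nodes $p=rn$, and interlacing (which you already used to confirm the vectors are sorted) carries the inequality across the gaps.
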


It is well-known that $y \prec x$ if and only if there exists a doubly stochastic matrix $D$, such that $y = D x$, see Theorem~II.1.10 in \cite{bhatia1997}. 
We conclude this section by constructing a doubly stochastic matrix $D$, such that $\mu^* = D \lambda^*$. 

Suppose first that the poles are distinct. 
Define the matrix $W \in \mathbb{R}^{(n-1) \times n}$ with entries:
\[
W_{kj} := \frac{\partial \mu_k}{\partial \lambda_j} = \frac{(\mu_k-\lambda_j)^{-2}}{\sum_{i=1}^n (\mu_k-\lambda_i)^{-2}} \ge 0, \quad k = 1,\dots,n-1, \, j =1,\dots,n.
\]
The row sums of $W$ are clearly $\sum_{j=1}^n W_{kj} = 1$.
For the column sums, differentiating the Vieta identity 
$$
\sum_{k=1}^{n-1} \mu_k = \frac{n-1}{n} \sum_{j=1}^n \lambda_j
$$ 
with respect to $\lambda_j$ yields 
$$
\sum_{k=1}^{n-1} W_{kj} = \frac{n-1}{n}.
$$
Let 
$$
\lambda := (\lambda_1, \dots, \lambda_n)^T, \,\,\, 
\mu := (\mu_1, \dots, \mu_{n-1})^T, \mbox{ and }
\nu := (\nu_1, \dots, \nu_{n-1})^T.
$$ 
We show that  
 $\mu = W \lambda$ holds. Indeed, from 
\[
0= p'(\mu_k) = \frac{p'(\mu_k)}{p(\mu_k)}=\sum_{j=1}^n \frac{1}{\mu_k-\lambda_j},
\]
we obtain
\[
0=\sum_{j=1}^n\frac{\mu_k-\lambda_j}{(\mu_k-\lambda_j)^2}
=\mu_k\sum_{j=1}^n\frac{1}{(\mu_k-\lambda_j)^2}-\sum_{j=1}^n\frac{\lambda_j}{(\mu_k-\lambda_j)^2}.
\]
Hence
\[
\mu_k=\frac{\sum_{j=1}^n \lambda_j(\mu_k-\lambda_j)^{-2}}{\sum_{j=1}^n (\mu_k-\lambda_j)^{-2}}
=\sum_{j=1}^n W_{kj}\,\lambda_j.
\]

We inflate $W$ to a matrix $D \in \mathbb{R}^{N \times N}$ as follows. Index the rows of $D$ by pairs $(k,m)$, where $k \in \{1, \dots, n-1\}$ and $m \in \{1, \dots, n\}$.  Index the columns of $D$ by pairs $(j, \ell)$, where $j \in \{1, \dots, n\}$ is the pole index and $\ell \in \{1, \dots, n-1\}$. Define:
\[
D_{(k,m),(j,\ell)} := \frac{1}{n-1}\,W_{kj}.
\]
We now verify that $D$ is doubly stochastic.
For the row sums, fix an index $(k, m)$:
\[
\sum_{j=1}^n \sum_{\ell=1}^{n-1} D_{(k,m),(j,\ell)} = \sum_{j=1}^n \sum_{\ell=1}^{n-1} \frac{1}{n-1} W_{kj} = \sum_{j=1}^n W_{kj} \left( \sum_{\ell=1}^{n-1} \frac{1}{n-1} \right) = \sum_{j=1}^n W_{kj} = 1.
\]
For the column sums, fix a column index $(j, \ell)$:
\[
\sum_{k=1}^{n-1} \sum_{m=1}^{n} D_{(k,m),(j,\ell)} = \sum_{k=1}^{n-1} \sum_{m=1}^{n} \frac{1}{n-1} W_{kj} = \sum_{k=1}^{n-1} \frac{n}{n-1} W_{kj} = \frac{n}{n-1} \frac{n-1}{n} = 1.
\]

 Up to a permutation, we can assume that vector $\lambda^*$ has entries $\lambda^*_{(j,\ell)} = \lambda_j$ and  vector $\mu^*$ has entries $\mu^*_{(k,m)} = \mu_k$. The $(k,m)$-th component of the product $D\lambda^*$ is:
\begin{align*}
(D\lambda^*)_{(k,m)} &= \sum_{j=1}^n \sum_{\ell=1}^{n-1} D_{(k,m),(j,\ell)} \, \lambda^*_{(j,\ell)} = \sum_{j=1}^n \sum_{\ell=1}^{n-1} \frac{1}{n-1} W_{kj} \lambda_j \\
&= \sum_{j=1}^n W_{kj} \lambda_j = \mu_k = \mu^*_{(k,m)}.
\end{align*}
This shows that $\mu^* = D\lambda^*$.

The case when the poles are not distinct, can be handled with a limiting argument and since the doubly stochastic matrices form a compact set, one can choose a convergent subsequence of the matrices $D$.

\section{Geometric interpretations and aggregate bounds for the spectrum of the principal submatrices} 
\label{sec:geometric}

%In this section, we demonstrate that these analytical results are mathematically isomorphic to general geometric properties of Hermitian matrices involving projections. Furthermore, we show that the bounds for principal submatrices are a specific instance of this general geometric principal.
%
%We establish the equivalence between the following three concepts:
%\begin{enumerate}
%    \item \textbf{Analytical:} Roots of rational equations (Theorem \ref{thm:main_weighted} and Theorem \ref{thm:renormalized_bounds}).
%    \item \textbf{Geometric:} Eigenvalues of rank-1 compressions onto hyperplanes.
%    \item \textbf{Discrete:} Eigenvalues of principal submatrices.
%\end{enumerate}

In this section, we give two equivalent geometric interpretations of the main result, Theorem \ref{thm:renormalized_bounds}. To show their equivalence, we rely on the fundamental connection between geometric projections and polynomial equations. We include the proof of the next lemma for completeness and clarity. 

Let $A$ be an $n \times n$ Hermitian matrix with eigenvalues 
$$
\lambda_1 \ge  \dots \ge \lambda_n
$$ 
and corresponding orthonormal eigenvectors $v_1, \dots, v_n$. Let $u \in \mathbb{C}^n$ be a unit vector.
Denote by $P_{u} := I - uu^*$ the orthogonal projection onto the subspace $u^\perp$. 
The {\it compression of $A$ onto $u^\perp$} is the operator 
$$
B:= P_{{u}} A P_{{u}} |_{u^\perp}.
$$
The eigenvalues of $B$ are denoted by
$$
\mu_1 \ge \dots \ge \mu_{n-1}.
$$
It is well-known that the eigenvalues of $A$ and $B$ interlace 
$$
 \lambda_1 \ge \mu_1 \ge \lambda_2 \ge \mu_2 \ge \dots \ge \mu_{n-1} \ge \lambda_n.
$$

\begin{lemma}
\label{lem:golub_secular}
The eigenvalues of $B$ are precisely the roots of
\begin{equation} \label{eq:secular_explicit}
    \sum_{i=1}^n |\langle {u}, {v}_i \rangle|^2 \prod_{\substack{j=1 \\ j \neq i}}^n (x-\lambda_j) = 0.
\end{equation}
\end{lemma}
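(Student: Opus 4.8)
The plan is to compute the characteristic polynomial of $B$ directly in the eigenbasis of $A$ by exploiting the block structure that the projection $P_u$ induces. First I would diagonalize: write $A = \sum_i \lambda_i v_i v_i^*$ and set $c_i := \langle u, v_i\rangle$, so that $\sum_i |c_i|^2 = 1$ since $u$ is a unit vector. The key reduction is that for $x \notin \{\lambda_1,\ldots,\lambda_n\}$ one has $\det(xI - B) = 0$ if and only if $x$ is an eigenvalue of $B$, and the eigenvalues of $B$ together with the single "extra" value coming from the $u$ direction are governed by a rank-one perturbation argument. Concretely, I would relate $\det(xI - B)$, viewed as a polynomial of degree $n-1$, to $\det(xI - A)$ and a secular-type correction.

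The cleanest route: consider the $n\times n$ matrix $P_u A P_u$ acting on all of $\mathbb{C}^n$; it has $B$'s eigenvalues plus a zero eigenvalue corresponding to the direction $u$ (since $P_u A P_u\, u = 0$). So $x^{n} \cdot$ (something) is not quite right because of that zero; instead I would work with $xI - P_u A P_u = xI - (I - uu^*)A(I - uu^*)$ and expand. Write $P_u A P_u = A - uu^* A - A uu^* + (u^* A u) uu^*$, which is a rank-at-most-$2$ (in fact, a rank-two symmetric) perturbation of $A$. Then $\det(xI - P_u A P_u) = \det(xI - A)\det\bigl(I - (xI-A)^{-1} M\bigr)$ where $M$ is that rank-$\le 2$ perturbation, and the second determinant reduces to a $2\times 2$ determinant via the matrix determinant lemma. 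Carrying out that $2\times 2$ computation in terms of the quantities $\sum_i \frac{|c_i|^2}{x-\lambda_i}$, $\sum_i \frac{\lambda_i|c_i|^2}{x-\lambda_i}$, and $\sum_i \frac{\lambda_i^2 |c_i|^2}{x-\lambda_i}$ should collapse — after using $u^* A u = \sum_i \lambda_i|c_i|^2$ and $\|u\|=1$ — to show that $\det(xI - P_u A P_u) = x \cdot \prod_{j}(x-\lambda_j) \cdot \sum_i \frac{|c_i|^2}{x-\lambda_i}$, up to sign. Dividing out the spurious factor $x$ (the eigenvalue along $u$) and clearing the denominator yields exactly $\sum_i |c_i|^2 \prod_{j\neq i}(x-\lambda_j)$, which is the claimed secular equation.

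An alternative, perhaps more transparent, approach avoids determinant lemmas: for $x$ not an eigenvalue of $A$, $x$ is an eigenvalue of $B$ precisely when there is a nonzero $w \perp u$ with $P_u A w = x w$, i.e. $Aw - xw \in \mathbb{C}u$, say $Aw - xw = \alpha u$ for some scalar $\alpha$. Then $w = \alpha (A - xI)^{-1}u$, and imposing $\langle w, u\rangle = 0$ gives $\alpha \langle (A-xI)^{-1}u, u\rangle = 0$. Since we need $w \neq 0$ we need $\alpha \neq 0$, hence the condition is $\langle (A - xI)^{-1} u, u\rangle = 0$, which in the eigenbasis reads $\sum_i \frac{|c_i|^2}{x - \lambda_i} = 0$. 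Multiplying through by $\prod_j (x - \lambda_j)$ produces \eqref{eq:secular_explicit}. One then checks a degree/counting argument: the left side of \eqref{eq:secular_explicit} is a polynomial of degree $n-1$ (the top coefficients $\sum_i |c_i|^2 = 1 \neq 0$), it has at most $n-1$ roots, and by the interlacing property all $n-1$ eigenvalues $\mu_k$ of $B$ satisfy the secular equation at least at those $x$ that are not poles; a continuity/limiting argument handles any $\mu_k$ that happens to coincide with some $\lambda_j$. Thus the roots of \eqref{eq:secular_explicit} are exactly $\mu_1,\ldots,\mu_{n-1}$.

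The main obstacle is the bookkeeping in the rank-two determinant computation (first approach) — keeping track of signs and verifying that the correction determinant really does telescope to a single term — or, in the second approach, the careful handling of the degenerate cases where $x = \lambda_j$ for some $j$, and of the subtlety that multiplying $\langle(A-xI)^{-1}u,u\rangle = 0$ by $\prod_j(x-\lambda_j)$ could in principle introduce spurious roots at repeated poles; the eigenvector-eigenvalue identity (Theorem~\ref{thm:eigenvector_identity}) or a direct continuity argument in the $\lambda_i$ resolves this. I would favor the second approach for the writeup, as it is essentially computation-free modulo the counting argument.
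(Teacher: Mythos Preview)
Your second approach is correct and would make a perfectly good proof; the first would also work but, as you note, the rank-two bookkeeping is unpleasant. However, the paper's argument is different from both and cleaner. The paper extends a basis of $u^\perp$ to a unitary $Q=[u\ \ U]$, observes that $\det(xI_{n-1}-B)$ is the $(1,1)$-cofactor of $Q^*(xI-A)Q$, and then uses similarity invariance of the adjugate:
\[
\det(xI_{n-1}-B) = \big(\operatorname{adj}(Q^*(xI-A)Q)\big)_{1,1} = u^*\operatorname{adj}(xI-A)\,u = (V^*u)^*\operatorname{adj}(xI-\Lambda)(V^*u),
\]
which equals $\sum_i |\langle u,v_i\rangle|^2\prod_{j\ne i}(x-\lambda_j)$ on the nose. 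The advantage of this route is that it yields an exact polynomial identity valid for \emph{all} $x$, so no separate counting or continuity argument is needed to handle the cases where some $\mu_k$ coincides with some $\lambda_j$. Your resolvent approach, by contrast, characterizes the eigenvalues of $B$ only for $x$ outside the spectrum of $A$ and then has to close the gap by a degree-count-plus-perturbation argument; this is fine, but it is exactly the ``main obstacle'' you identified, and the adjugate trick sidesteps it entirely. What your approach buys is a more transparent spectral interpretation (the secular condition $\langle(A-xI)^{-1}u,u\rangle=0$), which connects nicely to the rational function $f(x)$ used elsewhere in the paper.
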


\begin{proof}
Let $V:=[v_1\ \cdots\ v_n]$ be the unitary matrix that diagonalizes $A$: $A=V\Lambda V^*$,
where $\Lambda=\operatorname{Diag}(\lambda_1,\dots,\lambda_n)$.

Take $U\in\mathbb C^{n\times (n-1)}$ with orthonormal columns spanning $u^\perp$.
The compression of \(A\) onto \(u^\perp\) can be expressed as $B=U^*AU\in\mathbb C^{(n-1)\times (n-1)}$.
%Hence the eigenvalues \(\mu_1,\dots,\mu_{n-1}\) are precisely the roots of
%\(\det(B-xI_{n-1})\).
Extend $U$ to a unitary matrix \(Q=[u\ \ U]\in\mathbb C^{n\times n}\) and observe that
\[
Q^*(xI-A)Q=
\begin{pmatrix}
 u^*(xI-A)u & *\\
* & U^*(xI-A)U
\end{pmatrix}
=
\begin{pmatrix}
u^*(xI-A)u & *\\
* & xI_{n-1}-B
\end{pmatrix}.
\]
Thus, \(\det(xI_{n-1}-B)\) is equal to the \((1,1)\)-entry of
\(\operatorname{adj}( Q^*(xI-A)Q)\). Using twice the similarity invariance of the adjugate, we obtain
\begin{align*}
\det(xI_{n-1}-B) &=(\operatorname{adj} (Q^*(xI-A)Q))_{1,1} 
= (Q^* (\operatorname{adj} (xI-A) )Q)_{1,1} \\
&= u^*\,\operatorname{adj}(xI-A)\,u = (V^*u)^*\,\operatorname{adj}(xI-\Lambda)\,V^*u \\
&= \sum_{i=1}^n |\langle u,v_i\rangle|^2\prod_{\substack{j=1 \\ j \neq i}}^n (x-\lambda_j).
\end{align*}
This completes the proof.
\end{proof}

We now formulate the geometric bounds on the eigenvalues of a compression of a Hermitian matrix onto an arbitrary hyperplane. The proof shows that this result is equivalent to Theorem~\ref{thm:renormalized_bounds}.

\begin{theorem}
\label{thm:renormalized_geometric}
Let $u \in \mathbb{C}^n$ be a unit vector.
%Suppose the eigenvalues of $A$ are simple and $\langle {u}, {v}_i \rangle \not= 0$ for all $i$.
Then, for any $\ell$ and $r$, $1 \le \ell \le r \le n-1$, the eigenvalues $\mu_1, \dots, \mu_{n-1}$ of $B$ satisfy

\begin{align*}
\sum_{j=\ell}^r \lambda_{j+1} + \sum_{j=\ell}^r \frac{|\langle {u}, {v}_{j+1} \rangle|^2}{L_{r+1}({u})} (\lambda_\ell - \lambda_{j+1}) &\le    \sum_{j=\ell}^r \mu_j \\
&\le \sum_{j=\ell}^r \lambda_j - \sum_{j=\ell}^r \frac{|\langle {u}, {v}_j \rangle|^2}{U_{\ell}({u})} (\lambda_j - \lambda_{r+1}).
\end{align*}
provided that $L_{r+1}(u)$ and $U_{\ell}(u)$ are non-zero, where 
\[
U_{\ell}({u}) := \sum_{i=\ell}^n |\langle {u}, {v}_i \rangle|^2 \quad \mbox{and} \quad L_{r+1}({u}) := \sum_{i=1}^{r+1} |\langle {u}, {v}_i \rangle|^2.
\]
\end{theorem}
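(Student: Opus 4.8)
The plan is to recognize Theorem~\ref{thm:renormalized_geometric} as nothing more than Theorem~\ref{thm:renormalized_bounds} read through the dictionary supplied by Lemma~\ref{lem:golub_secular}. First I would set $w_i := |\langle u, v_i \rangle|^2$ for $i = 1, \dots, n$. Since $v_1, \dots, v_n$ is an orthonormal basis of $\mathbb{C}^n$ and $u$ is a unit vector, Parseval's identity gives $\sum_{i=1}^n w_i = \|u\|^2 = 1$, and each $w_i$ is manifestly non-negative. Thus $w_1, \dots, w_n$ is a legitimate system of weights in the sense of Theorem~\ref{thm:renormalized_bounds}.

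Next, by Lemma~\ref{lem:golub_secular} the eigenvalues $\mu_1 \ge \dots \ge \mu_{n-1}$ of the compression $B$ are exactly the roots of $\sum_{i=1}^n w_i \prod_{j \ne i}(x - \lambda_j) = 0$, which is precisely the polynomial $p(x)$ appearing in Theorem~\ref{thm:renormalized_bounds}. Under the identification $w_i = |\langle u, v_i \rangle|^2$, the scalars $U_\ell(u)$ and $L_{r+1}(u)$ defined in the statement coincide with $U_\ell$ and $L_{r+1}$, so the hypothesis that $L_{r+1}(u)$ and $U_\ell(u)$ are non-zero is exactly the hypothesis of Theorem~\ref{thm:renormalized_bounds}. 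Applying that theorem verbatim yields the asserted two-sided bound on $\sum_{j=\ell}^r \mu_j$.

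To justify the claim that the two results are \emph{equivalent} and not merely that one implies the other, I would run the dictionary in reverse: given arbitrary real $\lambda_1 \ge \dots \ge \lambda_n$ and non-negative weights $w_i$ summing to $1$, take $A := \operatorname{Diag}(\lambda_1, \dots, \lambda_n)$, so that $v_i = e_i$, and pick any unit vector $u \in \mathbb{C}^n$ with $|u_i|^2 = w_i$. Then $|\langle u, v_i \rangle|^2 = w_i$, and by Lemma~\ref{lem:golub_secular} the compression of $A$ onto $u^\perp$ has as eigenvalues precisely the roots of $p(x)$; hence in this case Theorem~\ref{thm:renormalized_geometric} specializes back to Theorem~\ref{thm:renormalized_bounds}.

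There is essentially no analytic obstacle here: the content is entirely contained in Lemma~\ref{lem:golub_secular}, which has already been proved, and in the elementary fact that the squared coordinates of a unit vector in an orthonormal frame form a probability vector. The only point requiring care is bookkeeping — matching the sign convention of the secular polynomial (here written as $\sum_i w_i \prod_{j \ne i}(x - \lambda_j)$, which has the same roots as the form $\sum_i w_i \prod_{j \ne i}(\lambda_j - x)$ used in the introduction) and confirming that the non-degeneracy conditions transcribe correctly under the substitution $w_i = |\langle u, v_i \rangle|^2$.
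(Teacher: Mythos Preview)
Your proposal is correct and mirrors the paper's proof essentially line for line: the paper also sets $w_i = |\langle u, v_i\rangle|^2$, invokes Lemma~\ref{lem:golub_secular}, and appeals directly to Theorem~\ref{thm:renormalized_bounds}, then runs the converse by taking $A = \operatorname{Diag}(\lambda_1,\dots,\lambda_n)$ and $u = (\sqrt{w_1},\dots,\sqrt{w_n})^T$. Your remark about the sign convention in $p(x)$ is a nice touch but, as you note, immaterial to the argument.
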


\begin{proof}
(Theorem \ref{thm:renormalized_bounds} $\Rightarrow$ Theorem \ref{thm:renormalized_geometric}.)
This is immediate from Lemma~\ref{lem:golub_secular}, setting $w_i = |\langle {u}, {v}_i \rangle|^2$ for all $i$, and observing that $\sum_{i=1}^n w_i =1$ since $\|u\|=1$. 

(Theorem \ref{thm:renormalized_geometric} $\Rightarrow$ Theorem \ref{thm:renormalized_bounds}).
Consider any real numbers $\lambda_1 \ge  \dots \ge \lambda_n$ and let 
$w_1, \dots, w_n$ be non-negative weights, such that $\sum_{i=1}^n w_i = 1.$
Let $A$ be $\text{Diag}(\lambda_1, \dots, \lambda_n)$, then $v_1,\dots, v_n$ is the standard orthonormal basis and for 
 ${u} := (\sqrt{w_1}, \dots, \sqrt{w_n})^T$, we have $|\langle {u}, v_i \rangle|^2 = w_i$. Theorem \ref{thm:renormalized_bounds} follows.
\end{proof}

The  problem of bounding eigenvalues of principal submatrices corresponds to a specific choice of the projection vector 
${u}$. The proof of the next result shows it is also equivalent to Theorems~\ref{thm:renormalized_bounds} and \ref{thm:renormalized_geometric}.

\begin{theorem}
\label{thm:renormalized_submatrix}
Fix $k$, $1\le k \le n$. 
Then, for any $\ell$ and $r$, $1 \le \ell \le r \le n-1$, the eigenvalues $\mu_{k,1}, \dots, \mu_{k,n-1}$ of $A_k$ satisfy
\begin{align*}
\sum_{j=\ell}^r \lambda_{j+1} + \sum_{j=\ell}^r \frac{|v_{j+1,k}|^2}{L_{r+1}^{k}} (\lambda_\ell - \lambda_{j+1}) 
\le  \sum_{j=\ell}^r \mu_{k,j} \le \sum_{j=\ell}^r \lambda_j - \sum_{j=\ell}^r \frac{|v_{j,k}|^2}{U_{\ell}^{k}} (\lambda_j - \lambda_{r+1}),
\end{align*}
provided that $L_{r+1}^k$ and $U_{\ell}^k$ are non-zero, where
\begin{align}
\label{2026-01-12-defn}
U_{\ell}^{k} := \sum_{i=\ell}^n |v_{i,k}|^2 \quad \mbox{and} \quad L_{r+1}^{k} := \sum_{i=1}^{r+1} |v_{i,k}|^2.
\end{align}

\end{theorem}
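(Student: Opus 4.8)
The plan is to derive Theorem~\ref{thm:renormalized_submatrix} as a direct specialization of Theorem~\ref{thm:renormalized_geometric}, exactly in the way the text announces (``its proof shows it is also equivalent to Theorems~\ref{thm:renormalized_bounds} and \ref{thm:renormalized_geometric}''). The key observation is that the $(n-1)\times(n-1)$ principal submatrix $A_k$, obtained by deleting the $k$-th row and column of $A$, is unitarily equivalent to the compression of $A$ onto the hyperplane $e_k^\perp$, where $e_k$ is the $k$-th standard basis vector of $\mathbb{C}^n$. So I would apply Theorem~\ref{thm:renormalized_geometric} with the particular choice $u = e_k$.

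First I would record that $e_k$ is a unit vector, so Theorem~\ref{thm:renormalized_geometric} applies. Next I would identify the weights: with $u = e_k$ and $v_1,\dots,v_n$ the orthonormal eigenvectors of $A$, we have $\langle e_k, v_i\rangle = \overline{v_{i,k}}$, hence $|\langle e_k, v_i\rangle|^2 = |v_{i,k}|^2$. Consequently $U_\ell(e_k) = \sum_{i=\ell}^n |v_{i,k}|^2 = U_\ell^k$ and $L_{r+1}(e_k) = \sum_{i=1}^{r+1}|v_{i,k}|^2 = L_{r+1}^k$ with the notation of \eqref{2026-01-12-defn}. The only remaining point is that the eigenvalues $\mu_1,\dots,\mu_{n-1}$ of the compression $B = P_{e_k} A P_{e_k}|_{e_k^\perp}$ coincide with the eigenvalues $\mu_{k,1},\dots,\mu_{k,n-1}$ of $A_k$; this is the elementary fact that deleting the $k$-th row and column of $A$ realizes, in the standard basis, the compression of $A$ onto $e_k^\perp$ (equivalently, $A_k = E_k^* A E_k$ where $E_k \in \mathbb{C}^{n\times(n-1)}$ consists of the standard basis vectors other than $e_k$, whose columns form an orthonormal basis of $e_k^\perp$). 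Substituting these identifications into the inequality of Theorem~\ref{thm:renormalized_geometric} yields precisely the claimed bounds, with the proviso ``$L_{r+1}^k$ and $U_\ell^k$ nonzero'' translating the proviso ``$L_{r+1}(u)$ and $U_\ell(u)$ nonzero''.

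There is essentially no obstacle here: the content is entirely contained in Theorem~\ref{thm:renormalized_geometric} together with Lemma~\ref{lem:golub_secular}. The only thing worth stating carefully is the identification $A_k \cong P_{e_k} A P_{e_k}|_{e_k^\perp}$, which one may justify in one line by noting that if $Q = [e_k \ \ E_k]$ is the permutation matrix that moves $e_k$ to the first coordinate, then $Q^* A Q$ is block-structured with the lower-right $(n-1)\times(n-1)$ block equal to $E_k^* A E_k = A_k$, while $E_k E_k^* = I - e_k e_k^* = P_{e_k}$, so $E_k^* A E_k$ represents $P_{e_k} A P_{e_k}$ restricted to $e_k^\perp$ in the orthonormal basis given by the columns of $E_k$. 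Alternatively, one can bypass the compression language entirely and invoke Lemma~\ref{lem:golub_secular} directly: the characteristic polynomial of $A_k$ is, by the same adjugate computation as in the proof of Lemma~\ref{lem:golub_secular} applied to $u = e_k$, equal to $\sum_{i=1}^n |v_{i,k}|^2 \prod_{j\neq i}(x-\lambda_j)$, so $\mu_{k,1},\dots,\mu_{k,n-1}$ are the roots of the polynomial $p$ from Theorem~\ref{thm:renormalized_bounds} with $w_i = |v_{i,k}|^2$, and the conclusion follows from Theorem~\ref{thm:renormalized_bounds}. Either route is short; I would write out the $u = e_k$ specialization of Theorem~\ref{thm:renormalized_geometric} explicitly so the reader sees the notational match $U_\ell^k = U_\ell(e_k)$, $L_{r+1}^k = L_{r+1}(e_k)$ at a glance.
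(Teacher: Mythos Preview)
Your proposal is correct and matches the paper's own argument: the paper proves the forward implication by applying Theorem~\ref{thm:renormalized_geometric} with $u=e_k$, noting that the compression of $A$ onto $e_k^\perp$ is precisely $A_k$, which is exactly what you do (with a bit more detail on the identification $A_k=E_k^*AE_k$). The only extra content in the paper is the reverse implication (Theorem~\ref{thm:renormalized_submatrix} $\Rightarrow$ Theorem~\ref{thm:renormalized_geometric}), included to justify the claimed \emph{equivalence}; that direction is not needed to establish the theorem itself.
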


\begin{proof}
(Theorem \ref{thm:renormalized_geometric} $\Rightarrow$ Theorem \ref{thm:renormalized_submatrix}.)
Apply Theorem \ref{thm:renormalized_geometric} with ${u} = {e}_k$, the $k$-th standard basis vector. 
The compression of $A$ onto the subspace orthogonal to ${e}_k$ is the principal submatrix $A_k$. 

(Theorem \ref{thm:renormalized_submatrix} $\Rightarrow$ Theorem \ref{thm:renormalized_geometric}.)
Consider an arbitrary unit vector ${u} \in \mathbb{C}^n$.
Take a unitary matrix $U$, such that $U{u} = {e}_1$ and define the Hermitian matrix $B = U A U^*$, having the same spectrum as $A$. The compression of $A$ onto ${u}^\perp$ has the same spectrum as the compression of $B$ onto $(U{u})^\perp = {e}_1^\perp$, and the latter is precisely the principal submatrix $B_1$.
By Theorem \ref{thm:renormalized_submatrix} applied to $B$, with $k=1$, and weights $|w_{i,1}|$ given by its eigenvectors ${w}_i = U{v}_i$:
\[
w_{i,1} = \langle {e}_1, {w}_i \rangle = \langle {e}_1, U{v}_i \rangle = \langle U^*{e}_1, {v}_i \rangle 
= \langle {u}, {v}_i \rangle,
\]
one deduces Theorem \ref{thm:renormalized_geometric}.
\end{proof}

We are now ready to state the result about the aggregate bounds on the eigenvalues of the $(n-1) \times (n-1)$ principal submatrices.

\begin{theorem}
\label{thm:aggregate_renormalized} 
For any $\ell$ and $r$, $1 \le \ell \le r \le n-1$, the eigenvalues of $A_k$, $k=1,\ldots, n$, satisfy
\begin{align*} \label{eq:global_renormalized_upper}
 n \sum_{j=\ell}^r \lambda_{j+1} + \sum_{j=\ell}^r (\lambda_l - \lambda_{j+1}) \Psi_{j+1}(r)  
 &\le  \sum_{k=1}^n \sum_{j=\ell}^r \mu_{k,j} \\
 & \le n \sum_{j=\ell}^r \lambda_j - \sum_{j=\ell}^r (\lambda_j - \lambda_{r+1}) \Omega_j(\ell),
\end{align*}
provided that $\Psi_{j+1}(r)$ and $\Omega_j(\ell)$ are well-defined for all $j = \ell, \ldots, r$, where
\[
\Omega_j(\ell) := \sum_{k=1}^n \frac{|v_{j,k}|^2}{U_{\ell}^{k}} \quad \text{and} \quad \Psi_j(r) := \sum_{k=1}^n \frac{|v_{j,k}|^2}{L_{r+1}^{k}},
\]
with $U_{\ell}^{k}$ and $L_{r+1}^{k}$ defined in \eqref{2026-01-12-defn}.
\end{theorem}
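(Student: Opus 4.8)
The plan is to obtain Theorem~\ref{thm:aggregate_renormalized} simply by summing the submatrix bounds of Theorem~\ref{thm:renormalized_submatrix} over $k = 1, \ldots, n$. Concretely, for each fixed $k$ with $L_{r+1}^k$ and $U_\ell^k$ non-zero, Theorem~\ref{thm:renormalized_submatrix} gives
\[
\sum_{j=\ell}^r \lambda_{j+1} + \sum_{j=\ell}^r \frac{|v_{j+1,k}|^2}{L_{r+1}^{k}} (\lambda_\ell - \lambda_{j+1})
\le  \sum_{j=\ell}^r \mu_{k,j} \le \sum_{j=\ell}^r \lambda_j - \sum_{j=\ell}^r \frac{|v_{j,k}|^2}{U_{\ell}^{k}} (\lambda_j - \lambda_{r+1}).
\]
Adding these $n$ chains of inequalities, the constant terms $\sum_{j=\ell}^r \lambda_{j+1}$ and $\sum_{j=\ell}^r \lambda_j$ each get multiplied by $n$, the left-hand and right-hand middle terms become $\sum_{k=1}^n \sum_{j=\ell}^r \mu_{k,j}$, and the correction terms aggregate as
\[
\sum_{k=1}^n \sum_{j=\ell}^r \frac{|v_{j+1,k}|^2}{L_{r+1}^{k}} (\lambda_\ell - \lambda_{j+1})
= \sum_{j=\ell}^r (\lambda_\ell - \lambda_{j+1}) \sum_{k=1}^n \frac{|v_{j+1,k}|^2}{L_{r+1}^{k}}
= \sum_{j=\ell}^r (\lambda_\ell - \lambda_{j+1}) \Psi_{j+1}(r),
\]
after swapping the order of summation and recognizing the definition of $\Psi_{j+1}(r)$; the upper-bound correction term is handled identically, producing $\sum_{j=\ell}^r (\lambda_j - \lambda_{r+1}) \Omega_j(\ell)$. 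This is essentially a bookkeeping step, so I would present it compactly.

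The one genuine subtlety is the hypothesis: Theorem~\ref{thm:renormalized_submatrix} requires $L_{r+1}^k \ne 0$ and $U_\ell^k \ne 0$ for the given $k$, whereas in the aggregate statement the stated proviso is only that $\Psi_{j+1}(r)$ and $\Omega_j(\ell)$ be well-defined for all $j = \ell, \ldots, r$. I would argue that these conditions align: $\Psi_j(r)$ is well-defined exactly when $L_{r+1}^k \ne 0$ for every $k$ that contributes (i.e. every $k$ with $v_{j,k} \ne 0$), and one checks that $L_{r+1}^k = \sum_{i=1}^{r+1} |v_{i,k}|^2 = 0$ forces $v_{j,k} = 0$ for all $j \le r+1$, in particular for the indices $j = \ell, \ldots, r$ and $j+1 = \ell+1, \ldots, r+1$ appearing in the sum, so such a $k$ simply contributes nothing and may be dropped from the summation. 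An analogous remark applies to $U_\ell^k$ and $\Omega_j(\ell)$, since $U_\ell^k = 0$ forces $v_{i,k} = 0$ for all $i \ge \ell$. Thus, restricting the sum over $k$ to the set of indices for which the relevant denominators are non-zero, Theorem~\ref{thm:renormalized_submatrix} applies termwise, and the dropped terms are exactly zero on both sides, so nothing is lost.

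The main obstacle, such as it is, will therefore be making the bookkeeping around these degenerate $k$ precise without cluttering the argument: I would phrase it as "for each $j$, the quantity $\Psi_j(r)$ being well-defined means the summands with $L_{r+1}^k = 0$ have $v_{j,k} = 0$, hence vanish; restrict attention to the remaining $k$ and apply Theorem~\ref{thm:renormalized_submatrix}." After that disclaimer, the proof is two lines: sum the inequalities of Theorem~\ref{thm:renormalized_submatrix}, interchange the finite double sums over $k$ and $j$, and substitute the definitions of $\Omega_j(\ell)$ and $\Psi_{j+1}(r)$. No continuity or limiting argument is needed here, since Theorem~\ref{thm:renormalized_submatrix} already incorporates the reduction from the general weighted/geometric statements.
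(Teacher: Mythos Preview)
Your approach is exactly the paper's: sum the bounds of Theorem~\ref{thm:renormalized_submatrix} over $k=1,\ldots,n$, interchange the finite sums over $k$ and $j$, and identify $\Omega_j(\ell)$ and $\Psi_{j+1}(r)$. Your extended discussion of the degenerate case $L_{r+1}^k=0$ or $U_\ell^k=0$ is more careful than the paper's treatment, but note that the paper's intended reading of ``well-defined'' is simply that no denominator vanishes (this is made explicit in the proof of the next theorem), under which hypothesis Theorem~\ref{thm:renormalized_submatrix} applies directly to every $k$ and your subtlety disappears.
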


\begin{proof}
Summing the upper bounds in Theorem \ref{thm:renormalized_submatrix} over $k=1, \dots, n$, one obtains
\begin{align*}
\sum_{k=1}^n \sum_{j=\ell}^r \mu_{k,j} &\le  n \sum_{j=\ell}^r \lambda_j - \sum_{k=1}^n \sum_{j=\ell}^r \frac{|v_{j,k}|^2}{U_{\ell}^{k}} (\lambda_j - \lambda_{r+1})  \\
 &=n \sum_{j=\ell}^r \lambda_j  - \sum_{j=\ell}^r (\lambda_j - \lambda_{r+1}) \Big( \sum_{k=1}^n \frac{|v_{j,k}|^2}{U_{\ell}^{k}} \Big).
\end{align*}
The derivation of the lower bound is similar.
\end{proof}

%\begin{remark}[Strict Improvement]
%Since $E_{\ge l}^{(k)} \le 1$, we always have $\frac{1}{E_{\ge l}^{(k)}} \ge 1$. Consequently, the factor $\Omega_j(l) = \sum_{k=1}^n \frac{|v_{k,j}|^2}{E_{\ge l}^{(k)}} \ge \sum_{k=1}^n |v_{k,j}|^2 = 1$.
%Specifically, if the window is strictly interior ($l > 1$), then $E_{\ge l}^{(k)} < 1$ for at least some $k$ (unless the matrix is diagonal and sorted), implying $\Omega_j(l) > 1$.
%This means the subtractive penalty term in \eqref{eq:global_renormalized_upper} is strictly larger than the one in Theorem \ref{thm:aggregate_bound} (where the factor is effectively 1). Thus, Theorem \ref{thm:aggregate_renormalized} provides a strictly tighter bound by exploiting the geometry of the eigenvectors.
%\end{remark}

Relaxing the bounds a bit, allows us to remove the conditions on the coefficients $\Psi_{j+1}(r)$ and $\Omega_j(\ell)$.

\begin{theorem}
\label{thm:aggregate_bound}
Let $A$ be any  $n \times n$ Hermitian matrix. For any $\ell$ and $r$, $1 \le \ell \le r \le n-1$, the eigenvalues of $A_k$, $k=1,\ldots, n$, satisfy
\begin{align*} 
\label{eq:global_upper}
(r-\ell+1)\lambda_{\ell} + (n-1)\sum_{j=\ell}^r \lambda_{j+1}   \le \sum_{k=1}^n \sum_{j=\ell}^r \mu_{k,j} \le (n-1)\sum_{j=\ell}^r \lambda_j + (r-\ell+1)\lambda_{r+1}.
\end{align*}
\end{theorem}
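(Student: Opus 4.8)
The plan is to obtain Theorem~\ref{thm:aggregate_bound} as a straightforward relaxation of Theorem~\ref{thm:aggregate_renormalized} (equivalently, of Theorem~\ref{thm:renormalized_submatrix} summed over $k$): the only point is to replace the data-dependent coefficients $\Omega_j(\ell)$ and $\Psi_{j+1}(r)$ by the constant $1$, which removes the hypothesis that they be well-defined. The key elementary facts are that the eigenvector matrix $V=[v_1\ \cdots\ v_n]$ is unitary, so each of its \emph{rows} is a unit vector and hence $\sum_{k=1}^n |v_{j,k}|^2 = 1$ for every $j$; and that consequently $U_\ell^k = \sum_{i=\ell}^n|v_{i,k}|^2 \le 1$ and $L_{r+1}^k = \sum_{i=1}^{r+1}|v_{i,k}|^2 \le 1$ for every $k$.

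First I would assume $A$ has a simple spectrum and that every eigenvector entry $v_{i,k}$ is nonzero, so that all $U_\ell^k$ and $L_{r+1}^k$ are strictly positive and Theorem~\ref{thm:aggregate_renormalized} applies. Since $0 < U_\ell^k \le 1$ we get $\frac{|v_{j,k}|^2}{U_\ell^k} \ge |v_{j,k}|^2$, and summing over $k$ gives $\Omega_j(\ell) \ge \sum_{k=1}^n |v_{j,k}|^2 = 1$; likewise $\Psi_{j+1}(r) \ge 1$. Because $\lambda_j - \lambda_{r+1} \ge 0$ and $\lambda_\ell - \lambda_{j+1} \ge 0$ for every $j$ with $\ell \le j \le r$, substituting these into Theorem~\ref{thm:aggregate_renormalized} yields
\begin{align*}
\sum_{k=1}^n\sum_{j=\ell}^r \mu_{k,j} &\le n\sum_{j=\ell}^r \lambda_j - \sum_{j=\ell}^r(\lambda_j-\lambda_{r+1})\Omega_j(\ell) \le n\sum_{j=\ell}^r \lambda_j - \sum_{j=\ell}^r(\lambda_j-\lambda_{r+1}) \\
&= (n-1)\sum_{j=\ell}^r \lambda_j + (r-\ell+1)\lambda_{r+1},
\end{align*}
and symmetrically $\sum_{k=1}^n\sum_{j=\ell}^r \mu_{k,j} \ge (r-\ell+1)\lambda_\ell + (n-1)\sum_{j=\ell}^r \lambda_{j+1}$, using $\sum_{j=\ell}^r 1 = r-\ell+1$.

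Finally I would remove the genericity hypothesis by a limiting argument: the Hermitian matrices with simple spectrum and all eigenvector entries nonzero are dense among all $n \times n$ Hermitian matrices, and both sides of the asserted inequalities depend continuously on $A$ — the $\lambda_j$ by continuity of eigenvalues, and $\sum_{k=1}^n\sum_{j=\ell}^r \mu_{k,j}$ because it is a symmetric function of the entries of the principal submatrices $A_k$ — so the non-strict inequalities pass to the limit. I do not anticipate a genuine obstacle: the only things to check are that the relaxation $U_\ell^k, L_{r+1}^k \le 1$ moves each bound in the correct direction, which is exactly where the constraint $\ell \le j \le r$ (hence $\lambda_j \ge \lambda_{r+1}$ and $\lambda_\ell \ge \lambda_{j+1}$) is used, and the routine fact that a small symmetric perturbation of any Hermitian matrix lands in the generic set.
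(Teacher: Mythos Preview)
Your proposal is correct and follows essentially the same route as the paper: relax Theorem~\ref{thm:aggregate_renormalized} using $\Omega_j(\ell)\ge 1$ and $\Psi_{j+1}(r)\ge 1$ (from $U_\ell^k,\,L_{r+1}^k\le 1$ and $\sum_k |v_{j,k}|^2=1$), then pass to the limit from a generic dense set of Hermitian matrices. The paper's proof is terser but identical in substance; your version is slightly more explicit about what ``generic'' means (simple spectrum, no zero eigenvector entries) and why the limit is legitimate.
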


\begin{proof}
Take a sequence $\{A^m\}$ of Hermitian matrices converging to $A$, such that for each $A^m$, the expressions $\Psi_{j+1}(r)$ and $\Omega_j(\ell)$ are well defined for all $j = \ell, \ldots, r$. (That is no denominator is zero.)
Since for $j = \ell, \ldots, r$, we have 
\[
\Omega_j(\ell) = \sum_{k=1}^n \frac{|v_{j,k}|^2}{U_{\ell}^{k}} \ge \sum_{k=1}^n |v_{j,k}|^2 = 1,
\]
the upper bound in Theorem~\ref{thm:aggregate_renormalized}, for the eigenvalues of $A^m$ and its $(n-1) \times (n-1)$ principal submatrices, can be relaxed to  
$$
\sum_{k=1}^n \sum_{j=\ell}^r \mu_{k,j} \le n \sum_{j=\ell}^r \lambda_j - \sum_{j=\ell}^r (\lambda_j - \lambda_{r+1})  = (n-1) \sum_{j=\ell}^r \lambda_j + (r-\ell+1) \lambda_{r+1}.
$$
(For simplicity, the dependence on $m$ is understood implicitly.)
Take the limit as $m$ approaches infinity to conclude. The justification of the lower bound is similar.
\end{proof}

Clearly, when  $\ell=r=j$, the bounds in Theorem~\ref{thm:aggregate_bound} reduce to 
\begin{align*} 
(n-1) \lambda_{j+1} + \lambda_{j}  \le \sum_{k=1}^n  \mu_{k,j} \le (n-1)\lambda_j + \lambda_{j+1}
\end{align*}
which are Thompson's bounds in Theorem~\ref{thm:spectral_bounds}.

\begin{corollary}
\label{2026-01-15-corBody}
For any $\ell$ and $r$, $1 \le \ell \le r \le n-1$, the following inequalities hold: 
$$
(n-\ell)\lambda_\ell  + (n-1)\sum_{j=\ell+1}^{r} \lambda_j + r\lambda_n
\le \sum_{k=1}^n  \sum_{j=\ell}^r  \mu_{k,j} 
\le (n-\ell)\lambda_1  + (n-1)\sum_{j=\ell+1}^{r} \lambda_j + r\lambda_{r+1}.
$$
\end{corollary}

\begin{proof}
For the lower bound,  express the sum over $[\ell, r]$ as the difference between the sum over $[\ell, n-1]$ and $[r+1, n-1]$.
Applying the lower bound of Theorem~\ref{thm:aggregate_bound}  to $[\ell, n-1]$ and the upper bound of Theorem~\ref{thm:aggregate_bound} to $[r+1, n-1]$ yields:
\begin{align*}
\sum_{k=1}^n \sum_{j=\ell}^r  \mu_{k,j} &= \sum_{k=1}^n \sum_{j=\ell}^{n-1}  \mu_{k,j} - \sum_{k=1}^n \sum_{j=r+1}^{n-1}  \mu_{k,j} \\
&\ge \Big[ (n-1)\sum_{j=\ell}^{n-1} \lambda_{j+1} + (n-\ell)\lambda_\ell \Big] - \Big[ (n-1)\sum_{j=r+1}^{n-1} \lambda_j + (n-r-1)\lambda_n \Big] \\
&= (n-\ell)\lambda_\ell + r\lambda_n + (n-1)\sum_{j=\ell+1}^r \lambda_j.
\end{align*}

For the upper bound, express the sum over $[\ell, r]$ as the difference between the sum over $[1, r]$ and $[1, \ell-1]$.
Applying the upper bound of Theorem~\ref{thm:aggregate_bound}  to $[1, r]$ and the lower bound of Theorem~\ref{thm:aggregate_bound}  to $[1, \ell-1]$ yields:
\begin{align*}
\sum_{k=1}^n \sum_{j=\ell}^r  \mu_{k,j} &= \sum_{k=1}^n \sum_{j=1}^{r}  \mu_{k,j} - \sum_{k=1}^n \sum_{j=1}^{\ell-1}  \mu_{k,j} \\
&\le \Big[ (n-1)\sum_{j=1}^{r} \lambda_j + r\lambda_{r+1} \Big] - \Big[ (\ell-1)\lambda_1 + (n-1)\sum_{j=1}^{\ell-1} \lambda_{j+1} \Big] \\
&= (n-\ell)\lambda_1  + (n-1)\sum_{k=\ell+1}^r \lambda_k + r\lambda_{r+1},
\end{align*}
concluding the proof.
\end{proof}

\begin{remark} 
Surprisingly a bound in Corollary~\ref{2026-01-15-corBody} may be better than the corresponding bound in Theorem~\ref{thm:aggregate_bound} and vice-versa. To see that let us calculate the difference between the upper bound in the theorem  and the upper bound in the corollary, both applied to the interval $[\ell, r]$:
\begin{align*}
&\quad \text{UB}_{\text{Thm}} - \text{UB}_{\text{Cor}} \\
&= \Big[ (n-1)\sum_{j=\ell}^r \lambda_j + (r-\ell+1)\lambda_{r+1} \Big] - \Big[ (n-\ell)\lambda_1  + (n-1)\sum_{j=\ell+1}^{r} \lambda_j + r\lambda_{r+1} \Big] \\
%&= (n-1)\Big( \lambda_\ell + \sum_{j=\ell+1}^r \lambda_j \Big) + (r-\ell+1)\lambda_{r+1} - (n-\ell)\lambda_1  - (n-1)\sum_{k=\ell+1}^{r} \lambda_k - r\lambda_{r+1} \\
&= (n-1)\lambda_\ell - (\ell-1)\lambda_{r+1} - (n-\ell)\lambda_1 \\
%&= (n-1)\lambda_\ell - (n-\ell)\lambda_1 - (\ell-1)\lambda_{r+1}\\
&= (\ell-1)(\lambda_\ell - \lambda_{r+1}) -(n-\ell)(\lambda_1 - \lambda_\ell).
\end{align*}
If $\ell = 1$, the last difference is zero. Otherwise, it could have any sign.
Analogously, we have $\text{LB}_{\text{Thm}} - \text{LB}_{\text{Cor}} = 0$, whenever $r=n$.

If one repeats the same idea to the bounds in Corollary~\ref{2026-01-15-corBody}, the result is bounds that are looser than those in Theorem~\ref{thm:aggregate_bound}.
\end{remark}

\section{Aggregate majorization theorems for principal submatrices}

The goal of this section is to establish Theorem~\ref{thm:hierarchy}. 
It establishes a complete hierarchical relationship between the eigenvalues of all principal submatrices of size $m$ and the eigenvalues of all principal submatrices of size $k$, where $1 \le k \le m \le n$. As a consequence, we recover the chain of the well-known Szasz's inequalities relating products of principal minors.

Let $X_m(A)$, $1 \le m \le n$, be the vector containing the eigenvalues of all $m \times m$ principal submatrices of the Hermitian matrix
$A$, counting repetitions. For example, $X_n(A) = \lambda(A)$ and $X_1(A) = \mbox{diag\,}(A)$, the vector of the diagonal entries of $A$. Below, we abuse the union notation to denote the concatenation of vectors. 

\begin{theorem}[Spectral Hierarchy]
\label{thm:hierarchy}
Let $A$ be an $n \times n$ Hermitian matrix. For any integers $1 \le k \le m \le n$, we have
\begin{align} 
\label{eq:general_hierarchy}
    \underbrace{X_m(A) \cup \dots \cup X_m(A)}_{\binom{m-1}{k-1} \text{ times}} \succ \underbrace{X_k(A) \cup \dots \cup X_k(A)}_{\binom{n-k}{m-k} \text{ times}}.
\end{align}
\end{theorem}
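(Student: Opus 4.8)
The plan is to reduce Theorem~\ref{thm:hierarchy} to the single ``drop one dimension'' case and then iterate. The basic building block I would establish first is: for every Hermitian matrix $B$ of size $N$,
\[
\underbrace{\lambda(B)\cup\cdots\cup\lambda(B)}_{N-1}\ \succ\ X_{N-1}(B)=\bigcup_{k=1}^{N}\lambda(B_k),
\]
where $B_k$ is $B$ with its $k$-th row and column removed. Both sides have length $N(N-1)$, and their entries have equal sum because $\sum_{k}\tr(B_k)=(N-1)\tr(B)$, so only the weak majorization needs proof. Writing $\lambda_j:=\lambda_j(B)$ and letting $\mu_{k,1}\ge\cdots\ge\mu_{k,N-1}$ be the eigenvalues of $B_k$, this is equivalent, via the identity $\sum_i(x_i-c)_+=\max_{t\ge 0}\bigl(\sum_{i\le t}x_i^{\downarrow}-tc\bigr)$ and the resulting Legendre description of partial sums, to the scalar inequality
\[
g(c):=\sum_{k=1}^{N}\sum_{j=1}^{N-1}(\mu_{k,j}-c)_+\ \le\ (N-1)\sum_{j=1}^{N}(\lambda_j-c)_+=:h(c),\qquad c\in\R .
\]

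The key point — and the step I expect to be the real obstacle — is how to verify this. A termwise bound $(\mu_{k,j}-c)_+\le(\lambda_j-c)_+$ only yields the factor $N$ in place of $N-1$, and estimating $g$ at a generic $c$ runs into partial sums of the $\mu_{k,r}$ over $k$, which the aggregate bound does not control. Instead, I would exploit that $g$ and $h$ are convex and piecewise affine, that $g=h$ outside $[\lambda_N,\lambda_1]$, and that $h$ is affine on each $[\lambda_{j+1},\lambda_j]$; hence it suffices to check $g(\lambda_j)\le h(\lambda_j)$ at the finitely many breakpoints $c=\lambda_j$. At such a point the interlacing $\lambda_{l+1}\le\mu_{k,l}\le\lambda_l$ forces $(\mu_{k,l}-\lambda_j)_+=\mu_{k,l}-\lambda_j$ for $l\le j-1$ and $(\mu_{k,l}-\lambda_j)_+=0$ for $l\ge j$, so $g(\lambda_j)=\sum_k\sum_{l=1}^{j-1}\mu_{k,l}-N(j-1)\lambda_j$. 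Now for $j\ge 2$ the upper bound of Theorem~\ref{thm:aggregate_bound}, applied with $\ell=1$, $r=j-1$ and ambient size $N$, gives $\sum_k\sum_{l=1}^{j-1}\mu_{k,l}\le(N-1)\sum_{l=1}^{j-1}\lambda_l+(j-1)\lambda_j$, whence $g(\lambda_j)\le(N-1)\sum_{l=1}^{j-1}(\lambda_l-\lambda_j)=h(\lambda_j)$ — with equality; the case $j=1$ is trivial. So the aggregate bound, on the intervals $[1,j-1]$, is exactly what is needed here.

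With the one-step majorization in hand, I would apply it to each principal submatrix $B=A_S$ with $|S|=m$ (so $B_k=A_{S\setminus\{k\}}$) and concatenate over all $m$-subsets $S$. Using that $x\succ y$ and $x'\succ y'$ imply $x\cup x'\succ y\cup y'$ for vectors of matching lengths, and that each $(m-1)$-subset $T$ arises as $S\setminus\{k\}$ for exactly $n-m+1$ pairs $(S,k)$, this gives the aggregated one-step relation: the concatenation of $m-1$ copies of $X_m(A)$ majorizes the concatenation of $n-m+1$ copies of $X_{m-1}(A)$. Iterating this from size $m$ down to size $k$, multiplying the $i$-th relation by the right number of copies so that consecutive relations compose (transitivity of $\succ$), and using that $(a\text{ copies of }x)\succ(a\text{ copies of }y)$ is equivalent to $x\succ y$, one reaches $\frac{(m-1)!}{(k-1)!}\,X_m(A)\succ\frac{(n-k)!}{(n-m)!}\,X_k(A)$; since $\frac{(m-1)!}{(k-1)!}=(m-k)!\binom{m-1}{k-1}$ and $\frac{(n-k)!}{(n-m)!}=(m-k)!\binom{n-k}{m-k}$, deleting the common $(m-k)!$ copies from each side yields exactly \eqref{eq:general_hierarchy}.

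A loose end to address throughout is the case of repeated eigenvalues, where the interlacing need not be strict and several $\lambda_j$ coincide; this can be handled either by first carrying out the argument when the eigenvalues of the relevant matrix are distinct (so the breakpoints $\lambda_j$ are genuinely distinct), or by a direct continuity argument, since majorization is preserved under limits and the doubly stochastic matrices form a compact set. The only genuinely substantive ingredient is Theorem~\ref{thm:aggregate_bound}; everything else is the convexity reduction, the union stability of $\succ$, and binomial bookkeeping.
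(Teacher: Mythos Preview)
Your proposal is correct and follows essentially the same route as the paper: the one-step majorization $(N-1)\,\lambda(B)\succ X_{N-1}(B)$ is exactly the paper's Lemma~\ref{lem:nodal_reduction}, and in both cases the key input is the upper bound of Theorem~\ref{thm:aggregate_bound} with $\ell=1$ together with Cauchy interlacing. The only differences are organizational: you verify the one-step lemma via the Hardy--Littlewood--P\'olya characterization $\sum(\cdot-c)_+$ and a convexity reduction to the breakpoints $c=\lambda_j$, whereas the paper checks the partial-sum deficit $\Delta_p$ directly and shows it is unimodal between the nodal indices $p=rN$ (these are dual descriptions of the same inequality); and you chain the one-step relation $(m-1)\,X_m\succ (n-m+1)\,X_{m-1}$ straight from $m$ down to $k$, whereas the paper first packages the induction as $Y_{n,m}(A)\succ X_m(A)$ (Theorem~\ref{thm:general_majorization}) and then localizes to each $m\times m$ principal submatrix. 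Your worry about repeated eigenvalues is unnecessary in your own argument: the interlacing bounds $\lambda_{l+1}\le\mu_{k,l}\le\lambda_l$ and the computation of $g(\lambda_j)$ go through verbatim without any distinctness assumption.
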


Several special cases are worth mentioning. 

 \noindent
$\bullet$ When $m=n$, we have
    \[
    \underbrace{\lambda(A) \cup \dots \cup \lambda(A)}_{\binom{n-1}{k-1} \text{ times}} \succ X_k(A), \mbox{ for all } k=1,\ldots, n.
    \]
    In particular, when $k=1$, one gets Schur's theorem $\lambda(A) \succ \mbox{diag\,}(A).$

 \noindent
$\bullet$ When $m=k+1$, we have
    \[
    \underbrace{X_{k+1}(A) \cup \dots \cup X_{k+1}(A)}_{k \text{ times}} \succ \underbrace{X_k(A) \cup \dots \cup X_k(A)}_{n-k \text{ times}}, \mbox{ for all } k=1,\ldots, n-1.
    \]
    
 \noindent
$\bullet$ When $m+k=n+1$, observe that the repetition factors on both sides of \eqref{eq:general_hierarchy} are equal:
    \[
    \binom{m-1}{k-1} = \binom{m-1}{m-k} = \binom{(m+k-1)-k}{m-k} = \binom{n-k}{m-k}.
    \]
    Thus, we can perform a cancellation, yielding a direct majorization between the vectors:
    \[
    X_m(A) \succ X_k(A), \mbox{ for all } 1 \le k \le m \le n \mbox{ with } m+k=n+1.
    \]
    The case $k=1$, forcing $m=n$, again recovers Schur's theorem.
    
 \noindent
$\bullet$ Let $P_m(A)$ denote the product of all $m \times m$ principal minors of $A$, $1 \le m \le n$. For example $P_n(A) = \det (A)$ and $P_1(A) = A_{1,1}\cdots A_{n,n}$, the product of the diagonal entries of $A$.  Since the elementary symmetric polynomials are Schur concave in the non-negative orthant, see Example~II.3.16 in \cite{bhatia1997}, one has that if $x \succ y$, where $x, y \in \mathbb{R}^N_+$, then $\prod_{i=1}^N x_i \le \prod_{i=1}^N y_i$.
Thus, \eqref{eq:general_hierarchy} implies that for positive semidefinite matrix $A$, we have
$$
P_{m}(A)^{\binom{m-1}{k-1}} \le P_k(A)^{\binom{n-k}{m-k}}, \mbox{ for all } 1 \le k \le m \le n.
$$
Raising both sides to the power $1/Z$, where 
$$
Z:=\binom{m-1}{k-1}\binom{n-1}{m-1} = \binom{n-k}{m-k}\binom{n-1}{k-1},
$$ 
leads to 
\begin{equation}
    P_{m}(A)^{1/\binom{n-1}{m-1}} \le P_k(A)^{1/\binom{n-1}{k-1}}, \mbox{ for all } 1 \le k \le m \le n.
\end{equation}
These are the well-known Szasz's inequalities, see \cite[Theorem 7.8.11]{horn2013}.

Let
$$
Y_{n,m} (A):=  \underbrace{\lambda(A) \cup \dots \cup \lambda(A)}_{\binom{n-1}{m-1} \text{ times}}, \mbox{ for } 1 \le m \le n.
$$
In what follows we assume that both $X_m$ and $Y_{n,m}$ are ordered non-increasingly. 
Note that 
$$
Y_{n,n}(A) =X_n(A) = \lambda(A) \mbox{ and } Y_{n,n-1}(A) = \underbrace{\lambda(A) \cup \dots \cup \lambda(A)}_{n-1 \text{ times}}.
$$

\begin{lemma}[Nodal majorization]
\label{cor:nodal_verification}
We have 
\begin{align*}
    \sum_{i=1}^{rn} (X_{n-1})_i \le \sum_{i=1}^{rn} (Y_{n,n-1})_i, \quad \mbox{ for all } r=1,\ldots, n-1.
\end{align*}
The inequality becomes equality when $r=n-1$.
\end{lemma}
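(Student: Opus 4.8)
The plan is to recognize Lemma~\ref{cor:nodal_verification} as the precise partial-sum comparison that underlies the majorization $Y_{n,n-1}(A) \succ X_{n-1}(A)$, and to prove it by invoking the aggregate bounds from Theorem~\ref{thm:aggregate_bound} with the particular choice $\ell = 1$. First I would unwind the definitions. The vector $Y_{n,n-1}(A)$ consists of $n-1$ copies of $\lambda(A)$, so when sorted non-increasingly its first $rn$ entries (for $1 \le r \le n-1$) are $n-1$ copies of $\lambda_1$, then $n-1$ copies of $\lambda_2$, and so on; since $rn = r(n-1) + r$, the sum of the first $rn$ sorted entries equals $(n-1)\sum_{j=1}^r \lambda_j + r\lambda_{r+1}$. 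On the other side, $X_{n-1}(A)$ is the concatenation of the $n$ spectra $\mu(A_k)$, $k=1,\ldots,n$, each of length $n-1$, so it has exactly $n(n-1)$ entries. The key combinatorial observation is that the sum of its $rn$ \emph{largest} entries is \emph{at most} $\sum_{k=1}^n \sum_{j=1}^r \mu_{k,j}$, because picking the top $r$ eigenvalues from each of the $n$ blocks gives one admissible selection of $rn$ entries, and the sorted top-$rn$ sum dominates every such selection. (This is the elementary fact that $\sum_{i=1}^{N} z^{\downarrow}_i = \max$ over all $N$-subsets of the sum of the chosen entries, applied here to a union of blocks.)

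Next I would feed these two computations into Theorem~\ref{thm:aggregate_bound} with $\ell = 1$ and the given $r$. That theorem gives
\[
\sum_{k=1}^n \sum_{j=1}^r \mu_{k,j} \le (n-1)\sum_{j=1}^r \lambda_j + r\lambda_{r+1},
\]
and combining with the two observations above yields
\[
\sum_{i=1}^{rn} (X_{n-1})_i \;\le\; \sum_{k=1}^n \sum_{j=1}^r \mu_{k,j} \;\le\; (n-1)\sum_{j=1}^r \lambda_j + r\lambda_{r+1} \;=\; \sum_{i=1}^{rn} (Y_{n,n-1})_i,
\]
which is exactly the claimed inequality. For the equality case $r = n-1$, note that $rn = (n-1)n$ is the total length of both vectors, so both sides are the full sums: $\sum_{i} (Y_{n,n-1})_i = (n-1)\sum_{j=1}^n \lambda_j$, while $\sum_{i}(X_{n-1})_i = \sum_{k=1}^n \sum_{j=1}^{n-1}\mu_{k,j} = \sum_{k=1}^n \operatorname{tr}(A_k) = \sum_{k=1}^n (\operatorname{tr}(A) - A_{kk}) = n\operatorname{tr}(A) - \operatorname{tr}(A) = (n-1)\operatorname{tr}(A)$, and these agree. (Alternatively, equality follows because the $r=n-1$ inequality in Theorem~\ref{thm:aggregate_bound} with $\ell=1$ is an equality of traces.)

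I expect the main obstacle to be the careful justification of the step $\sum_{i=1}^{rn}(X_{n-1})_i \le \sum_{k=1}^n\sum_{j=1}^r \mu_{k,j}$, i.e.\ that choosing the top $r$ from each block is not necessarily the globally top-$rn$ selection, but is always $\le$ it; this requires stating and using the block-selection characterization of the sorted partial sum, and noting that each block $\mu(A_k)$ is individually sorted so that its block-contribution $\sum_{j=1}^r\mu_{k,j}$ is its own top-$r$ sum. Everything else is bookkeeping: verifying the arithmetic $rn = r(n-1)+r$, identifying the $rn$-th largest entry of $Y_{n,n-1}$ as $\lambda_{r+1}$, and checking $1 \le \ell = 1 \le r \le n-1$ so that Theorem~\ref{thm:aggregate_bound} applies.
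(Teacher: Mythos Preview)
Your overall approach matches the paper's: invoke Theorem~\ref{thm:aggregate_bound} with $\ell=1$ and identify the right-hand side as $\sum_{i=1}^{rn}(Y_{n,n-1})_i$. The computation on the $Y$-side and the treatment of the equality case $r=n-1$ are both correct.

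However, your justification of the step
\[
\sum_{i=1}^{rn}(X_{n-1})_i \;\le\; \sum_{k=1}^n\sum_{j=1}^r \mu_{k,j}
\]
is backwards. You argue that picking the top $r$ from each block is ``one admissible selection of $rn$ entries'' and that ``the sorted top-$rn$ sum dominates every such selection''; that reasoning proves $\sum_{i=1}^{rn}(X_{n-1})_i \ge \sum_{k=1}^n\sum_{j=1}^r \mu_{k,j}$, the \emph{opposite} of what you need. You repeat the same reversal in your obstacle paragraph (``block selection $\le$ global top-$rn$'' cannot yield ``global top-$rn$ $\le$ block selection''). If the first link really were only an inequality in the direction you justify, the chain would not close.

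The missing ingredient is Cauchy interlacing. Since $\lambda_j \ge \mu_{k,j} \ge \lambda_{j+1}$ for every $k$, each entry in layer $j$ (the $n$ numbers $\mu_{1,j},\ldots,\mu_{n,j}$) is at least every entry in layer $j+1$. Hence, when $X_{n-1}$ is sorted non-increasingly, its first $rn$ entries are \emph{exactly} the union of layers $1,\ldots,r$, and one has the \emph{equality}
\[
\sum_{i=1}^{rn}(X_{n-1})_i \;=\; \sum_{k=1}^n\sum_{j=1}^r \mu_{k,j}.
\]
This is precisely what the paper writes (as an equality, not an inequality). With this correction your argument goes through and coincides with the paper's proof.
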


\begin{proof}
This follows from the upper bound in Theorem~\ref{thm:aggregate_bound}, applied with $\ell=1$:
\[
\sum_{i=1}^{rn} (X_{n-1})_i
= \sum_{k=1}^n \sum_{j=1}^r \mu_{k,j} 
\le (n-1)\sum_{j=1}^r \lambda_j + r\lambda_{r+1}
= \sum_{i=1}^{rn} (Y_{n,n-1})_i.
\]
When $r=n-1$, we have
\begin{align*}
\sum_{i=1}^{(n-1)n} (X_{n-1})_i
&= \sum_{k=1}^n \sum_{j=1}^{n-1} \mu_{k,j}
= \sum_{k=1}^n \mbox{tr\,} (A_k)
= (n-1) \mbox{tr\,} (A) \\
&= (n-1)\sum_{j=1}^{n-1} \lambda_j + (n-1)\lambda_{n}
= \sum_{i=1}^{(n-1)n} (Y_{n,n-1})_i.
\end{align*}
This establishes a majorization type relationship at the nodal indices  $p=rn$. 
\end{proof}

Before proving Theorem~\ref{thm:hierarchy}, we establish two structural lemmas. The proof of the first shows that for the specific case of $(n-1) \times (n-1)$ submatrices, the majorization condition  above at the nodal indices ($p=rn$) is sufficient. The second lemma establishes an inductive link between the submatrix sizes.

\begin{lemma}
\label{lem:nodal_reduction}
For $n \ge 2$, we have $Y_{n,n-1} (A)\succ X_{n-1}(A)$, for all $n \times n$ 
Hermitian matrices $A$.
\end{lemma}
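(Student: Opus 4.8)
The plan is to deduce the full majorization $Y_{n,n-1}(A) \succ X_{n-1}(A)$ from the partial-sum inequalities at the nodal indices $p = rn$ established in Lemma~\ref{cor:nodal_verification}, together with the equality at $p=(n-1)n$. The key observation is that $Y_{n,n-1}$ is a very special vector: it is the concatenation of $n-1$ copies of $\lambda(A) = (\lambda_1, \ldots, \lambda_n)$, so when sorted non-increasingly it becomes the "staircase" vector in which $\lambda_1$ is repeated $n-1$ times, then $\lambda_2$ is repeated $n-1$ times, and so on. Thus its sorted partial sums $S(p) := \sum_{i=1}^p (Y_{n,n-1})_i^{\downarrow}$ form a piecewise-linear concave function of $p$ whose breakpoints occur exactly at the nodal indices $p = rn$ ($r=0,1,\ldots,n-1$): on the block $p \in [rn, (r+1)n]$ we have $S(p) - S(rn) = (p-rn)\lambda_{r+1}$, i.e. $S$ is affine with slope $\lambda_{r+1}$ there.

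The main steps, in order, are as follows. First I would record that the total sums agree: $\sum_i (X_{n-1})_i = \sum_k \mathrm{tr}(A_k) = (n-1)\mathrm{tr}(A) = \sum_i (Y_{n,n-1})_i$, which is the $r=n-1$ equality already noted. Second, I would fix an arbitrary index $p \in \{1, \ldots, (n-1)n\}$ and write $p = rn + t$ with $0 \le t < n$, so that $rn \le p \le (r+1)n$. Third, I would exploit interlacing: since the eigenvalues $\mu_{k,j}$ of each $A_k$ interlace those of $A$, we have $\mu_{k,j} \le \lambda_j$ for every $k$; more to the point, for the sorted vector $X_{n-1}^{\downarrow}$ one shows $(X_{n-1})_i^{\downarrow} \le \lambda_{r+1}$ whenever $i > rn$, because at most $rn$ of the $\mu_{k,j}$ can exceed $\lambda_{r+1}$ (for each of the $n$ matrices $A_k$, at most $r$ of its eigenvalues exceed $\lambda_{r+1}$, by the interlacing $\mu_{k,r+1} \le \lambda_{r+1}$). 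Fourth, combining the second and third steps:
\[
\sum_{i=1}^{p} (X_{n-1})_i^{\downarrow} = \sum_{i=1}^{rn} (X_{n-1})_i^{\downarrow} + \sum_{i=rn+1}^{rn+t} (X_{n-1})_i^{\downarrow} \le \sum_{i=1}^{rn} (Y_{n,n-1})_i + t\,\lambda_{r+1} = \sum_{i=1}^{p} (Y_{n,n-1})_i,
\]
where the first inequality uses Lemma~\ref{cor:nodal_verification} on the first block and the bound $(X_{n-1})_i^{\downarrow} \le \lambda_{r+1}$ on the remaining $t$ terms, and the final equality is the staircase structure of $Y_{n,n-1}^{\downarrow}$ described above. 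This gives the partial-sum inequality for every $p$, and with the total-sum equality it is exactly $Y_{n,n-1}(A) \succ X_{n-1}(A)$.

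The step I expect to require the most care is the third one: the claim that at most $rn$ entries of $X_{n-1}$ strictly exceed $\lambda_{r+1}$, equivalently $(X_{n-1})_{rn+1}^{\downarrow} \le \lambda_{r+1}$. This is where Cauchy interlacing is doing the real work — for each $k$, the $(r+1)$-th largest eigenvalue of $A_k$ satisfies $\mu_{k,r+1} \le \lambda_{r+1}$, so each $A_k$ contributes at most $r$ eigenvalues above $\lambda_{r+1}$; summing over the $n$ submatrices gives the bound $rn$. One must be slightly careful about ties (eigenvalues equal to $\lambda_{r+1}$), but since the argument only needs the inequality "$\le \lambda_{r+1}$" for the tail terms rather than strict inequality, ties cause no problem. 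Everything else is bookkeeping with the explicit piecewise-linear form of the sorted $Y_{n,n-1}$.
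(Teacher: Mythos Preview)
Your overall strategy---nodal inequalities from Lemma~\ref{cor:nodal_verification} plus Cauchy interlacing---is the same as the paper's, but step four contains a genuine gap. The sorted vector $Y_{n,n-1}^{\downarrow}$ consists of $\lambda_1$ repeated $n-1$ times, then $\lambda_2$ repeated $n-1$ times, and so on; hence the piecewise-linear partial-sum function $S(p)$ has breakpoints at the multiples of $n-1$, \emph{not} at the nodal indices $rn$. Your claimed equality $\sum_{i=1}^{rn}(Y_{n,n-1})_i + t\lambda_{r+1} = \sum_{i=1}^{rn+t}(Y_{n,n-1})_i$ is therefore false whenever $t>n-1-r$: in that range the entries of $Y_{n,n-1}^{\downarrow}$ at positions $rn+1,\ldots,rn+t$ already include copies of $\lambda_{r+2}$, so the right-hand side is (generically) strictly smaller, and your chain of inequalities only produces an upper bound \emph{larger} than the desired $S(p)$. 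A concrete instance: $n=3$, $r=1$, $t=2$, $p=5$ gives $S(3)+2\lambda_2=2\lambda_1+3\lambda_2$, whereas $S(5)=2\lambda_1+2\lambda_2+\lambda_3$.

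The missing ingredient is the \emph{lower} half of interlacing, $\mu_{k,j}\ge\lambda_{j+1}$, which you never invoke. It gives $(X_{n-1})_i^{\downarrow}\ge\lambda_{r+2}$ for all $i\le (r+1)n$, so for $p$ in the problematic range $(r+1)(n-1)<p\le(r+1)n$ one may work backward from the next node: $T((r+1)n)-T(p)\ge((r+1)n-p)\lambda_{r+2}=S((r+1)n)-S(p)$, and combining with $T((r+1)n)\le S((r+1)n)$ yields $T(p)\le S(p)$. The paper packages your (correct) half and this second half into a single unimodality observation: on each interval $[(k-1)n,\,kn]$ the difference $\Delta_p=S(p)-T(p)$ first rises (your argument, using $\mu_{i,k}\le\lambda_k$) and then falls (using $\mu_{i,k}\ge\lambda_{k+1}$), so it stays non-negative between its non-negative nodal endpoints.
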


\begin{proof}
In this proof omit the argument $A$ for brevity.
The ordered vector $Y_{n,n-1}$ is constructed in blocks of equal entries of sizes $n-1$. The entries within block $k$ are equal to $\lambda_k$. The indexes of the entries in block $k$ range from $(k-1)(n-1)+1$ up to $k(n-1)$.
In contrast, $X_{n-1}$ is constructed in layers of size $n$. The entries in the $k$-th layer are $\mu_{1,k}, \ldots, \mu_{n,k}$, 
the $k$-th largest eigenvalues of the $(n-1) \times (n-1)$ principal submatrices $A_1,\ldots, A_n$.
The indexes of the entries in layer $k$  range from $(k-1)n+1$ up to $kn$. By the interlacing, these elements satisfy 
 $\lambda_{k} \ge \mu_{i,k} \ge \lambda_{k+1}$ for all $i=1,\ldots, n$.

%Since the layer size, $n$, is strictly greater than the block size, $n-1$, the $k$-th layer of $X_{n-1}$ spans across 
%the end of the $k$-th block of $Y_{n,n-1}$ and the beginning of the $(k+1)$-th block of $Y_{n,n-1}$. 
We analyze the behaviour of the differences of partial sums  
\[
\Delta_p = \sum_{i=1}^p (Y_{n,n-1})_i - \sum_{i=1}^p (X_{n-1})_i.
\]
within each interval $p \in \{(k-1)n + 1, \dots, kn\}$.

First, consider the range $(k-1)n+1 \le p \le k(n-1)$. Since $(k-1)(n-1) \le (k-1)n$, in this subinterval, $p$ is within the $k$-th block of $Y_{n,n-1}$, so 
$(Y_{n,n-1})_p = \lambda_k$. Since $(X_{n-1})_p$ belongs to the $k$-th layer, we have $\lambda_k \ge (X_{n-1})_p$. The increment in the partial sum differences is $\Delta_p - \Delta_{p-1} = (Y_{n,n-1})_p - (X_{n-1})_p = \lambda_k - (X_{n-1})_p \ge 0$. Thus, $\Delta_p$ is non-decreasing starting from the node $(k-1)n$. By Lemma~\ref{cor:nodal_verification}, we have 
$\Delta_{(k-1)n} \ge 0$, so the values remain non-negative throughout this range.

Second, consider the remaining subrange $k(n-1) +1 \le p \le kn$. Here, $p$ has entered the $(k+1)$-th block of $Y_{n,n-1}$, so 
$(Y_{n,n-1})_p = \lambda_{k+1}$. Since $(X_{n-1})_p$ remains in the $k$-th layer, the interlacing lower bound implies 
$(X_{n-1})_p \ge \lambda_{k+1}$. Consequently, the increment is $\Delta_p - \Delta_{p-1} = \lambda_{k+1} - (X_{n-1})_p \le 0$. This implies that $\Delta_p$ is  non-increasing in this subinterval as it approaches the next node $kn$.

Combining these observations,  within each interval $p \in \{(k-1)n + 1, \dots, kn\}$, the sequence $\Delta_p$ is unimodal: it first rises and then falls. By Lemma~\ref{cor:nodal_verification} the sequence starts and ends at a non-negative value  
$\Delta_{(k-1)n}, \Delta_{kn} \ge 0$. Thus, $\Delta_p \ge 0$ for all $p \in \{(k-1)n + 1, \dots, kn\}$.

The fact that  $\Delta_{(n-1)n} = 0$ follows from Lemma~\ref{cor:nodal_verification}.
\end{proof}

We need two simple properties of majorization.  Let $u^{i}, v^i \in \R^{n_i}$, $i=1,\ldots, k$, be two collections of vectors. If $u^{i} \succ v^{i}$ for all $i = 1, \dots, k$, then 
\begin{align}
\label{2026-01-12-succ}
u^{1} \cup \cdots \cup u^k \succ v^{1} \cup \cdots \cup v^k.
\end{align}
We call this the {\it scaling property}. 
If, $n_i = N, u^i = u$ and $v^i = v$ for all $i=1,\ldots, k$, then \eqref{2026-01-12-succ} is equivalent to $u \succ v$. We call this the {\it cancellation property}.

\begin{lemma}
\label{lem:induction_principal}
Let $n \ge 2$. Suppose that $Y_{n-1, m}(B) \succ X_m(B)$ for $1 \le m \le n-1$ and for all $(n-1) \times (n-1)$ 
Hermitian matrices $B$. Then $Y_{n, m}(A) \succ X_{m}(A)$ for all $n \times n$ 
Hermitian matrices $A$ and all $1 \le m \le n$.
\end{lemma}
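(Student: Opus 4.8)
The plan is to prove the inductive step by decomposing each size-$m$ principal submatrix count through the size-$(n-1)$ level, much as Lemma~\ref{lem:nodal_reduction} handles the top level $m=n$. First I would dispose of the two extreme cases. When $m=n$, the vector $X_n(A)=\lambda(A)$ equals $Y_{n,n}(A)$ (since $\binom{n-1}{n-1}=1$), so the majorization is trivial. When $m=n-1$, the claim is exactly Lemma~\ref{lem:nodal_reduction}, which does not even use the inductive hypothesis. So I may assume $1 \le m \le n-2$.

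The key combinatorial observation is how the eigenvalues of the $m\times m$ principal submatrices of $A$ are organized: every $m\times m$ principal submatrix of $A$ sits inside exactly $n-m$ of the $(n-1)\times(n-1)$ principal submatrices $A_1,\dots,A_n$ of $A$, and conversely $X_m(A_k)$ runs over all $m\times m$ principal submatrices of $A$ that avoid row/column $k$. Hence, counting with multiplicity,
\[
\underbrace{X_m(A)\cup\cdots\cup X_m(A)}_{n-m\ \text{times}} \;=\; X_m(A_1)\cup\cdots\cup X_m(A_n)
\]
up to reordering. Now I apply the inductive hypothesis to each $(n-1)\times(n-1)$ matrix $A_k$: $Y_{n-1,m}(A_k)\succ X_m(A_k)$, i.e.
\[
\underbrace{\mu(A_k)\cup\cdots\cup\mu(A_k)}_{\binom{n-2}{m-1}\ \text{times}}\;\succ\;X_m(A_k).
\]
By the scaling property \eqref{2026-01-12-succ}, concatenating over $k=1,\dots,n$ gives
\[
\Bigl(\underbrace{\mu(A_1)\cup\cdots\cup\mu(A_1)}_{\binom{n-2}{m-1}}\Bigr)\cup\cdots\cup\Bigl(\underbrace{\mu(A_n)\cup\cdots\cup\mu(A_n)}_{\binom{n-2}{m-1}}\Bigr)\;\succ\;X_m(A_1)\cup\cdots\cup X_m(A_n).
\]
The right-hand side is $(n-m)$ copies of $X_m(A)$. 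The left-hand side is $\binom{n-2}{m-1}$ copies of $X_{n-1}(A)$ (recall $X_{n-1}(A)=\mu(A_1)\cup\cdots\cup\mu(A_n)$). So far this yields
\[
\underbrace{X_{n-1}(A)\cup\cdots\cup X_{n-1}(A)}_{\binom{n-2}{m-1}\ \text{times}}\;\succ\;\underbrace{X_m(A)\cup\cdots\cup X_m(A)}_{n-m\ \text{times}}.
\]

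To finish, chain this with Lemma~\ref{lem:nodal_reduction}, $Y_{n,n-1}(A)\succ X_{n-1}(A)$. Applying the scaling property to $\binom{n-2}{m-1}$ copies of this majorization gives
\[
\underbrace{Y_{n,n-1}(A)\cup\cdots\cup Y_{n,n-1}(A)}_{\binom{n-2}{m-1}}\;\succ\;\underbrace{X_{n-1}(A)\cup\cdots\cup X_{n-1}(A)}_{\binom{n-2}{m-1}}\;\succ\;\underbrace{X_m(A)\cup\cdots\cup X_m(A)}_{n-m}.
\]
The leftmost vector consists of $(n-1)\binom{n-2}{m-1}$ copies of $\lambda(A)$. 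Using transitivity of majorization, I now have $(n-1)\binom{n-2}{m-1}$ copies of $\lambda(A)$ majorizing $(n-m)$ copies of $X_m(A)$. The final step is bookkeeping with binomial coefficients: I divide out the common factor. Since
\[
\frac{(n-1)\binom{n-2}{m-1}}{n-m}\;=\;\frac{(n-1)!}{(m-1)!(n-m)!}\;=\;\binom{n-1}{m-1},
\]
letting $g:=\gcd$ (or simply noting $(n-m)\mid (n-1)\binom{n-2}{m-1}$, which the identity above shows), the cancellation property lets me reduce both multiplicities by the factor $n-m$, leaving $\binom{n-1}{m-1}$ copies of $\lambda(A)$ majorizing one copy of $X_m(A)$, i.e. $Y_{n,m}(A)\succ X_m(A)$, as desired.

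The main obstacle I anticipate is making the ``$(n-m)$ copies of $X_m(A)$ equals the concatenation of $X_m(A_1),\dots,X_m(A_n)$'' identity fully rigorous as an equality of multisets, and then correctly juggling the repetition counts so that the cancellation property applies cleanly (one must check that $n-m$ genuinely divides the multiplicity $(n-1)\binom{n-2}{m-1}$, which the binomial identity guarantees, and that after cancellation one lands on exactly $\binom{n-1}{m-1}$). Everything else is a routine application of transitivity and the scaling/cancellation properties already isolated in the paper.
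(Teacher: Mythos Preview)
Your proof is correct and follows essentially the same route as the paper's: apply the inductive hypothesis to each $A_k$, concatenate via the scaling property, identify the left side as $\binom{n-2}{m-1}$ copies of $X_{n-1}(A)$ and the right side as $(n-m)$ copies of $X_m(A)$ via the combinatorial count, then chain with Lemma~\ref{lem:nodal_reduction} and cancel by $n-m$ using the identity $(n-1)\binom{n-2}{m-1}=(n-m)\binom{n-1}{m-1}$. The only difference is cosmetic: you treat the cases $m=n$ and $m=n-1$ separately upfront, whereas the paper's argument absorbs them (the $m=n-1$ case simply degenerates to Lemma~\ref{lem:nodal_reduction} inside the general argument).
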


\begin{proof}
By assumption,  we have $Y_{n-1, m}(A_k) \succ X_m(A_k)$, for any $1 \le m \le n-1$ and $k=1,\ldots, n$. By the scaling property, we obtain
\begin{equation} \label{eq:induct_sum}
   Y_{n-1, m}(A_1) \cup \cdots \cup Y_{n-1, m}(A_n) \succ X_m(A_1) \cup \cdots \cup X_m(A_n).
\end{equation}

On the left-hand side, each term $Y_{n-1, m}(A_k)$ consists of $\binom{n-2}{m-1}$ copies of $\lambda(A_k)$. 
The concatenation  $\bigcup_{k=1}^n \lambda(A_k)$ constitutes precisely the vector $X_{n-1}(A)$. Thus, 
 the aggregate left-hand side represents $\binom{n-2}{m-1}$ copies of the vector $X_{n-1}(A)$.

On the right-hand side, the union represents the collection of eigenvalues of all $m \times m$ principal submatrices found within the matrices $A_1, \dots, A_n$. Consider an arbitrary $m \times m$ submatrix $M$ of $A$. This matrix $M$ appears as a submatrix of an $A_k$ if and only if the single index deleted to form $A_k$ is chosen from the $n-m$ indices not used by $M$. Consequently, each distinct $M$ is counted exactly $n-m$ times. The aggregate right-hand side is therefore equivalent to $(n-m)$ copies of the vector $X_m(A)$.

Substituting back into \eqref{eq:induct_sum}, we obtain:
\[
 \underbrace{X_{n-1}(A) \cup \cdots \cup X_{n-1}(A)}_{\binom{n-2}{m-1} \text{ times}} \succ  \underbrace{X_m(A) \cup \cdots \cup X_m(A) }_{n-m \text{ times}}.
\]
By Lemma~\ref{lem:nodal_reduction}, we have  
$$
 \underbrace{\lambda(A) \cup \dots \cup \lambda(A)}_{n-1 \text{ times}} \succ X_{n-1}(A).
$$
 Scaling this relation by the factor $\binom{n-2}{m-1}$ and applying transitivity, we get:
\[
 \underbrace{\lambda(A) \cup \dots \cup \lambda(A)}_{\binom{n-2}{m-1} (n-1)  \text{ times}} 
\succ  \underbrace{X_m(A) \cup \cdots \cup X_m(A) }_{n-m \text{ times}}.
\]
Finally, applying the cancellation property, we divide the repetition counts by $(n-m)$. The resulting number of copies of $\lambda(A)$ is:
\[
\frac{n-1}{n-m} \binom{n-2}{m-1} = \frac{n-1}{n-m} \cdot \frac{(n-2)!}{(m-1)!(n-m-1)!} = \binom{n-1}{m-1}.
\]
This count matches the definition of $Y_{n,m}(A)$. Therefore, $Y_{n,m}(A) \succ X_m(A)$.
\end{proof}

\begin{theorem}
\label{thm:general_majorization}
We have 
\[
Y_{n,m}(A) \succ X_m(A), \mbox{ for all } 1 \le m \le n
\]
and any $n \times n$ Hermitian matrix $A$. 
\end{theorem}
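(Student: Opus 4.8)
The plan is to prove Theorem~\ref{thm:general_majorization} by induction on the matrix size $n$, the point being that the substantive work is already contained in Lemmas~\ref{lem:nodal_reduction} and \ref{lem:induction_principal}; the theorem itself only needs an induction to stitch these together. The inductive hypothesis at size $n-1$ is word-for-word the hypothesis of Lemma~\ref{lem:induction_principal}, so the step from $n-1$ to $n$ is essentially a single application of that lemma.

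\emph{Base case.} For $n=1$ the only admissible value is $m=1$, and $Y_{1,1}(A) = X_1(A) = (A_{1,1})$, so the majorization is trivial. (One may equally well start at $n=2$: then $Y_{2,2}(A) = X_2(A) = \lambda(A)$, while $Y_{2,1}(A) = \lambda(A) \succ \mbox{diag\,}(A) = X_1(A)$ is Schur's majorization \eqref{2026-01-13-schur}, equivalently the $n=2$ instance of Lemma~\ref{lem:nodal_reduction}.)

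\emph{Inductive step.} Fix $n \ge 2$ and assume $Y_{n-1,m}(B) \succ X_m(B)$ for every $(n-1)\times(n-1)$ Hermitian matrix $B$ and every $1 \le m \le n-1$. For each $1 \le m \le n-1$, Lemma~\ref{lem:induction_principal} applies verbatim and yields $Y_{n,m}(A) \succ X_m(A)$ for every $n\times n$ Hermitian $A$. The remaining value $m=n$ is immediate: $\binom{n-1}{n-1}=1$, so $Y_{n,n}(A) = \lambda(A) = X_n(A)$, and a vector majorizes itself. Hence $Y_{n,m}(A)\succ X_m(A)$ for all $1\le m\le n$, which is the assertion at size $n$, completing the induction.

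As for obstacles, at this stage there really are none: every difficulty has been pushed into Lemma~\ref{lem:nodal_reduction} --- whose proof hinges on the unimodal (``first rises, then falls'') behaviour of the partial-sum differences $\Delta_p$, anchored at the nodal indices by Lemma~\ref{cor:nodal_verification}, which is itself a consequence of the aggregate bound in Theorem~\ref{thm:aggregate_bound} --- and into Lemma~\ref{lem:induction_principal}, where the combinatorial multiplicities (each $m\times m$ submatrix appearing in exactly $n-m$ of the $A_k$, together with the identity $\tfrac{n-1}{n-m}\binom{n-2}{m-1}=\binom{n-1}{m-1}$) and the scaling and cancellation properties of majorization do the real work. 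The only mild care needed in the present proof is to treat the degenerate base case and the edge value $m=n$ separately, since neither is produced by the mechanism of Lemma~\ref{lem:induction_principal}; both, however, are trivial.
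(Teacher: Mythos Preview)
Your proof is correct and follows essentially the same route as the paper: induction on $n$, with the trivial base case(s) and the inductive step supplied directly by Lemma~\ref{lem:induction_principal}. Your explicit handling of the edge value $m=n$ is a minor cosmetic difference (the paper absorbs it into the statement of Lemma~\ref{lem:induction_principal}), but the argument is the same.
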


\begin{proof}
The validity of the boundary cases $m=1$ and $m=n$ is clear. For $m=1$, the statement $Y_{n,1}(A) \succ X_1(A)$ is precisely Schur's theorem \eqref{2026-01-13-schur}. For $m=n$, we have  $Y_{n,n}(A) = X_n(A)$.
These boundary cases are sufficient to verify the theorem for low dimensions. Specifically, for $n=1$ (with $m=1$) and $n=2$ (with $m=1, 2$), the result holds immediately.

For $n \ge 3$, to address the remaining cases, we use induction on the dimension of the matrix. If we assume that $Y_{n-1, m}(B) \succ X_m(B)$ holds for $1 \le m \le n-1$ and for all $(n-1) \times (n-1)$ 
Hermitian matrices $B$, then Lemma \ref{lem:induction_principal} ensures that  $Y_{n, m}(A) \succ X_{m}(A)$ for all $n \times n$  Hermitian matrices $A$ and all $1 \le m \le n$.

The base case of the  induction is $n=2$, when we trivially have $Y_{1, 1}(B) \succ X_1(B)$ for and all $1 \times 1$ matrices $B$.
\end{proof}

\begin{proof}[Proof of Theorem~\ref{thm:hierarchy}]
Let $M$ be an $m \times m$ principal submatrix of $A$. Applying Theorem \ref{thm:general_majorization} to $M$ with subsize $k$, $1 \le k \le m$, we have:
    \begin{equation} \label{eq:local_hierarchy}
        \underbrace{\lambda(M) \cup \dots \cup \lambda(M)}_{\binom{m-1}{k-1} \text{ times}} = Y_{m,k}(M)\succ X_k(M),
    \end{equation}
    where $X_k(M)$ is the vector of eigenvalues of all $k \times k$ submatrices of $M$.
    
    Take the union of \eqref{eq:local_hierarchy} over all  $m \times m$ principal submatrices $M$ of $A$.   
    On the left-hand side of that union we have precisely the vector $X_m(A)$ repeated $\binom{m-1}{k-1}$ times.
The right-hand side of the union collects every $k \times k$ submatrix contained within every $m \times m$ parent $M$.  Let $Q$ be a specific $k \times k$ principal submatrix of $A$. $Q$ is a submatrix of an $m \times m$ parent $M$ if and only if the index set of $M$ contains the index set of $Q$. Since $Q$ fixes $k$ indices, we must choose the remaining $m-k$ indices from the available $n-k$ indices to form $M$.
    Thus, there are exactly $\binom{n-k}{m-k}$ such parent matrices $M$. Consequently, the eigenvalues of each $Q$ appear $\binom{n-k}{m-k}$ times in the aggregation, establishing \eqref{eq:general_hierarchy}.
\end{proof}

{\bf Acknowledgments: } The first author is grateful to the Technical University of Vienna and particularly to the department of Variational Analysis, Dynamics and Operations Research with head Dr. Aris Daniilidis for their warm hospitality during the work on this paper.

\end{document}